\newtheorem{defi}{Definition}[section]
\newtheorem{prop}[defi]{Proposition}
\newtheorem{thm}[defi]{Theorem}
\newtheorem{lem}[defi]{Lemma}
\newtheorem{cor}[defi]{Corollary}
\newtheorem{qu}{Question}
\numberwithin{equation}{section}
\newcommand{\N}{\mathbf{N}}
\newcommand{\Z}{\mathbf{Z}}
\newcommand{\R}{\mathbf{R}}
\newcommand{\C}{\mathbf{C}}
\newcommand{\bS}{\mathbf{S}}
\newcommand{\cD}{\mathscr D}
\newcommand{\cH}{\mathscr H}		
\newcommand{\cL}{\mathscr L}		
\newcommand{\cR}{\mathscr R}
\newcommand{\bM}{\operatorname{\mathbf M}}
\newcommand{\Lip}{\operatorname{Lip}}
\newcommand{\spt}{\operatorname{spt}}
\newcommand{\dist}{\operatorname{dist}}
\newcommand{\defl}{\mathrel{\mathop:}=}
\newcommand{\defr}{=\mathrel{\mathop:}}
\newcommand{\B}{\textbf B}		
\newcommand{\oB}{\textbf U}		
\newcommand{\curr}[1]{[\![{#1}]\!]}
\newcommand{\id}{\mathrm{id}}
\newcommand{\res}{\scaleobj{1.7}{\llcorner}}
\newcommand{\BigWedge}{\mathord{\adjustbox{valign=B,totalheight=.6\baselineskip}{$\bigwedge$}}}
\newcommand*{\cone}{%
	{%
		\mathpalette\@coneOf{\times}%
	}%
}
\newcommand*{\@coneOf}[2]{%
	\sbox0{$\m@th#1\mathsf{#2}$}%
	\mathsf{#2}%
	\kern-\wd0 %
	\mkern2.00mu\relax
	\nonscript\mkern0.25mu\relax
	\mathsf{#2}%
}
\title[The hemisphere is almost calibrated]{The Riemannian hemisphere is almost calibrated in the injective hull of its boundary}
\author{Roger Z\"{u}st}
\email{roger\_zuest@hotmail.com}
\thanks{This work was carried out while the author was at the University of Bern.}
\begin{document}

\begin{abstract}
An exact differential two-form is constructed in the injective hull of the Riemannian circle, whose comass norm, defined via the inscribed Riemannian area on normed planes, is stationary at every point of the open hemisphere spanned by the circle. As a consequence, in any metric space, the induced Finsler mass of a two-dimensional Ambrosio-Kirchheim rectifiable current with boundary a Riemannian circle of length $2\pi$ admits a lower bound of $2\pi$ plus a second-order term in the Hausdorff distance to an isometric copy of the hemisphere. This estimate applies to all oriented Lipschitz surfaces spanning the circle, regardless of their topology, and thus offers positive evidence for Gromov's filling area conjecture.
\end{abstract}

\maketitle
\tableofcontents


\newpage

\section{Introduction}

Let $\bS^1$ be the Riemannian circle of length $2\pi$ equipped with the intrinsic geodesic distance $d$. A filling of $\bS^1$ is a compact, oriented Riemannian surface $M$ with intrinsic distance $d_M$ such that the restriction $(\partial M, d_M|_{\partial M \times \partial M})$ is isometric to $\bS^1$. Whether $\operatorname{Area}(M) \geq 2\pi$ for such $M$ is an open question posed by Gromov \cite[\S2.2]{G}. Equality does hold for the Riemannian hemisphere of constant curvature $1$, from now on denoted by $\bS^2_+$. There are some partial answers available in the literature. As a consequence of Pu's systolic inequality \cite{P}, it is true that $\operatorname{Area}(M) \geq 2\pi$ whenever $M$ is a Riemannian disk that fills $\bS^1$. There is also a generalization due to Ivanov \cite{Iv2} for Finsler disks $M$ in case $\operatorname{Area}(M)$ is interpreted as the Holmes-Thompson or Busemann-Hausdorff definition of area. It is shown in \cite{BCIK} that this lower bound holds for any Riemannian surface $M$ of genus 1 that fills $\bS^1$. The question is widely open for surfaces of higher genus. In this paper, we propose a more general approach to the problem using the theory of metric currents developed by Ambrosio and Kirchheim \cite{AK}. Two-dimensional real rectifiable currents $\cR_2(X)$ in a metric space $X$ are generalizations of compact, oriented Lipschitz surfaces. In this setting, the question strongly depends on the particular notion of Finsler area used. As we will indicate in Subsection~\ref{sec:finslermass}, a Finsler area induces a corresponding Finsler mass on real rectifiable currents. More generally, this applies to rectifiable sets and to rectifiable chains in metric spaces with coefficients in a normed abelian group as introduced by De Pauw and Hardt \cite{DPH}. For example, the mass of Ambrosio-Kirchheim currents corresponds to the Gromov-mass$\ast$ (or Benson) area. The inscribed Riemannian area $\mu^{\rm ir}$, introduced by Ivanov \cite{Iv}, is the largest possible choice, see Lemma~\ref{lem:masslem}. So, if Gromov's filling area conjecture is true with respect to some Finsler area, it is also true with respect to the inscribed Riemannian area. The mass on real rectifiable currents corresponding to $\mu^{\rm ir}$ is denoted by $\bM_{\rm ir}$. A possible formulation of Gromov's conjecture is thus:

\begin{qu}
	\label{qu:mainquestion}
Let $X$ be a complete metric space and $T \in \cR_2(X)$ be a real rectifiable current with boundary $\partial T$ isometric to $\curr {\bS^1}$. Is it true that $\bM_{\rm ir}(T) \geq 2\pi$, with equality if and only if $T$ is isometric to $\curr{\bS^2_+}$?
\end{qu}

Note that in case $\curr M$ is induced by a compact, oriented Riemannian surface $M$, then $\bM_{\rm ir}(\curr M)$ agrees with the usual area of $M$. Instead of working with an arbitrary metric space $X$, it is sufficient to consider the injective hull $E(\bS^1)$ of $\bS^1$. Roughly speaking, this is the smallest injective metric space that contains $\bS^1$ isometrically. It is interesting to note that $E(\bS^1)$ contains a unique isometric copy of $\bS^2_+$, see Lemma~\ref{lem:isometricembedding}. Injective hulls were introduced independently by Isbell \cite{I} and Dress \cite{D}. An injective metric space $Y$ has the defining property that, whenever $\varphi : A \to Y$ is a $1$-Lipschitz map defined on a subset $A \subset X$ of a metric space $X$, there exists a $1$-Lipschitz extension $\bar \varphi : X \to Y$. So whenever $T \in \cR_2(X)$ has boundary $\partial T$ isometric to $\curr{\bS^1}$ in a metric space $X$, there exists a $1$-Lipschitz map $\bar \varphi : X \to E(\bS^1)$ such that $\bar \varphi|_{\spt(\partial T)}$ is an isometry. The pushforward $\bar \varphi_\# T \in \cR_2(E(\bS^1))$ is also a filling of $\curr{\bS^1}$ with $\bM_{\rm ir}(\bar \varphi_\# T) \leq \bM_{\rm ir}(T)$. The injective hull of the sphere $\bS^n$ can be characterized explicitly as those $1$-Lipschitz functions $f : \bS^n \to \R$ with $f(x) + f(-x) = \pi$, see Proposition~\ref{prop:injectivehull}. By fixing a base point and an orientation of $\bS^1$ we can identify $E(\bS^1)$ with the space of $1$-Lipschitz functions $f : \R \to \R$ that satisfy $f_{\alpha + \pi} + f_\alpha = \pi$ for all $\alpha \in \R$. This space is contained isometrically as a compact and convex subset of $L^\infty([0,\pi))$. Although $E(\bS^1)$ spans an infinite dimensional subspace, it may be possible to find a calibration for the isometric copy of $\bS^2_+$ that sits in $E(\bS^1)$ by employing a notion of differential form in an infinite dimensional setting. Such a calibration would answer Gromov's question in the positive. In this direction, we study in detail the differential two-form defined by
\begin{equation}
	\label{eq:defomega}
\tilde\omega_f \defl \frac{1}{\pi}\int_0^{\pi}\int_\alpha^{\pi} p_{\alpha,\beta}(f)\, d\pi_\alpha \wedge d\pi_\beta\,d\beta\,d\alpha
\end{equation}
with coefficients
\[
p_{\alpha,\beta}(f) = \frac{1 - \cos(\beta-\alpha)^2 - \cos(f_\alpha)^2 - \cos(f_\beta)^2 + 2 \cos(\beta - \alpha)\cos(f_\alpha)\cos(f_\beta)}{\sin(\beta-\alpha)^2\sin(f_\alpha)^2\sin(f_\beta)^2} .
\]
This definition is motivated by the differential form in \cite{BI} used to show that planes contained in a normed space are calibrated with respect to the Hausdorff measure. Further justifications for this particular form are given following the statement of the main theorems. First we clarify the notation used in the definition of $\tilde \omega$. If $E(\bS^1)$ is realized as subset of $L^\infty([0,\pi))$, we adopt $\pi_\alpha : L^\infty([0,\pi)) \to \R$ to denote the coordinate projections $\pi_\alpha(g) = g_\alpha$ for almost every $\alpha$. Although ill-defined as proper linear functionals for fixed $\alpha$, in contrast to say $\pi_\alpha : C([0,\pi]) \to \R$, the definition of $\tilde \omega_f$ as an integral is meaningful.

The coefficients $p_{\alpha,\beta}(f)$ are well-defined because any $f \in E(\bS^1)\setminus \bS^1$ takes values in $(0,\pi)$. If $T \in \bM_2(E(\bS^1))$ is a metric current with finite mass and support away from $\bS^1$, the action $T(\tilde\omega)$ is defined by integrating $T(p_{\alpha,\beta}(f)\,d\pi_\alpha\wedge d\pi_\beta)$ with respect to $\alpha$ and $\beta$, see Subsection~\ref{sec:actioncurrents}. This integral makes sense because the coefficients are nonnegative and uniformly bounded on $\spt(T)$, due to an interpretation of $p_{\alpha,\beta}(f)$ in spherical geometry, see Lemma~\ref{lem:geometriccoeff}. Furthermore, the comass $\|\tilde\omega_f\|_{\rm ir}$ of $\tilde \omega_f$ is given by the infimum over all $M \geq 0$ such that
\[
|\tilde\omega_f(v \wedge w)| \leq M \mu^{\rm ir}(v \wedge w)
\]
for all $v,w \in L^\infty([0,\pi))$. Here $\mu^{\rm ir}(v \wedge w)$ is the inscribed Riemannian area of the parallelogram spanned by $v$ and $w$. Calibrations, as defined by Harvey and Lawson \cite{HL}, are special differential forms on Riemannian manifolds. They are exact—or closed, depending on the setting—and have comass equal to 1. Here are the essential reasons for the particular definition of $\tilde \omega$ in \eqref{eq:omegadef}:
\begin{itemize}
	\item $\tilde \omega$ calibrates $\bS^2_+$ in the sense that $|\tilde \omega_f(v \wedge w)| \leq \mu_f(v \wedge w)$ for arbitrary $f \in \bS^2_+ \setminus \bS^1$ and $v,w \in L^\infty([0,\pi))$ with equality if and only if $v$ and $w$ are in the tangent space of $\bS^2_+$ at $f$ (or are linearly dependent). Because we use the inscribed Riemannian area, instead of possible other definitions of Finsler area, this statement is a consequence of the isoperimetric inequality for plane paths.
	\item $\tilde \omega$ is closed, i.e., $d\tilde \omega = 0$, because $p_{\alpha,\beta}(f)$ depends only on $f_\alpha$ and $f_\beta$, and not on $f_\gamma$ for any $\gamma \in [0,\pi) \setminus \{\alpha,\beta\}$. Since $E(\bS^1)$ is convex, $\tilde \omega$ is exact.
\end{itemize}

We obtain that $\|\tilde \omega_f\|_{\rm ir}$ is close to $1$ in case $f$ is close to $\bS^2_+$. This is a consequence of the stability of the isoperimetric inequality. The precise statement we obtain is the following:

\begin{thm}
	\label{thm:mainthm1}
For every $r > 0$ and $\xi \in (1,2)$, there exists $C > 0$ such that for all $f \in E(\bS^1)$ with $\dist(f,\bS^1) \geq r$, the following estimate holds:
\[
\left|\|\tilde\omega_f\|_{\rm ir} - 1\right| \leq C \dist(f,\bS^2_+)^{\xi} .
\]
\end{thm}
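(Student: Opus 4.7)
The plan is to treat $f$ as a small perturbation of its nearest hemisphere point $g \in S^2_+$ and to exploit that $\tilde\omega$ is a true calibration on $S^2_+$. Throughout, the hypothesis $\dist(f,S^1) \geq r$ keeps the denominators $\sin(f_\alpha)^2\sin(f_\beta)^2$ in $p_{\alpha,\beta}(f)$ bounded below, so $p$ and its derivatives in $(f_\alpha,f_\beta)$ are uniformly controlled in the region under consideration. As a first step, invoking the spherical interpretation of Lemma~\ref{lem:geometriccoeff}, I would verify that for any $g \in S^2_+$ the form $\tilde\omega_g$ coincides with the pullback of the hemisphere area form, giving $\|\tilde\omega_g\|_{\rm ir} = 1$, with equality attained on the plane tangent to $S^2_+$ at $g$. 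This calibration identity serves both as upper comparison (for bounding $\|\tilde\omega_f\|_{\rm ir}$ from above) and as test plane (for bounding it from below).

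Next, choose $g \in S^2_+$ with $d \defl \dist(f,S^2_+) = \|f-g\|_\infty$; we may assume $d$ is small, otherwise the bound follows from the uniform control of $p_{\alpha,\beta}$. For $v,w \in L^\infty([0,\pi))$ I would Taylor-expand
\[
p_{\alpha,\beta}(f) = p_{\alpha,\beta}(g) + A_{\alpha,\beta}(g)(f_\alpha - g_\alpha) + B_{\alpha,\beta}(g)(f_\beta - g_\beta) + R_{\alpha,\beta}(f,g),
\]
with $|R_{\alpha,\beta}| \leq C d^2$ uniformly. Integrating against $v_\alpha w_\beta - v_\beta w_\alpha$ decomposes the difference $\tilde\omega_f(v \wedge w) - \tilde\omega_g(v \wedge w)$ into a linear term $L_g(f-g;v,w)$ in the perturbation $h \defl f-g$ and a quadratic remainder bounded by $C d^2 \mu^{\rm ir}(v \wedge w)$. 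The upper comass estimate then reduces to controlling $L_g$.

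The crux of the argument is to show $|L_g(h;v,w)| \leq C d^\xi \mu^{\rm ir}(v \wedge w)$. Since $\tilde\omega$ is exact and its coefficients depend only on pointwise values of $f$, an integration by parts in $\alpha,\beta$ should transfer the differentiation of $p$ onto $h$ and, combined with the calibration identity from the first step, force the leading order to cancel; any residual loss inherited from passing to the supremum over infinite-dimensional planes is absorbed by choosing $\xi<2$. This is also the principal obstacle: the comass is a supremum of nonlinear functionals of $f$ and is not classically differentiable, so one must argue jointly over the choice of plane $v \wedge w$ and the perturbation $h$, showing that the near-maximising plane for $\tilde\omega_f$ stays $O(d)$-close to the tangent plane of $S^2_+$ at $g$ and that the first-order variation along it vanishes. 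This near-stationarity of the calibration is the conceptual reason behind the second-order estimate appearing in the theorem, and explains the name ``almost calibrated.''
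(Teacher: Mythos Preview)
Your outline has the right shape---Taylor expand around the nearest hemisphere point, show the first variation vanishes, bound the remainder---but two concrete gaps prevent it from going through as written.

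First, the claim ``$|R_{\alpha,\beta}| \leq C d^2$ uniformly'' is false. The second derivatives of $(f_\alpha,f_\beta)\mapsto p_{\alpha,\beta}(f)$ blow up like $\sin(\beta-\alpha)^{-2}$ near the diagonal (see Lemma~\ref{lem:derivative} and Lemma~\ref{lem:unifboundder}); only $\sin(\beta-\alpha)^2 R_{\alpha,\beta}$ is bounded by $Cd^2$. This non-integrable singularity is exactly why the final exponent is $\xi<2$ rather than $2$: one must trade a power of $d$ to integrate across the diagonal, as in the proof of Lemma~\ref{lem:hoeldercont}. Your quadratic remainder term therefore does not give $Cd^2\mu^{\rm ir}(v\wedge w)$ directly.

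Second, and more seriously, the sentence ``one must argue jointly over the choice of plane $v\wedge w$ and the perturbation $h$, showing that the near-maximising plane for $\tilde\omega_f$ stays $O(d)$-close to the tangent plane of $S^2_+$ at $g$'' names the main difficulty without supplying a mechanism. This is where the bulk of the paper's work lies. The paper first recasts the comass as a supremum over circle-valued paths $\gamma=e^{i(\nu_h+\eta)}$ (Proposition~\ref{prop:massbound}, Lemma~\ref{lem:gammaboundary}), then uses Fuglede's quantitative stability of the planar isoperimetric inequality to force the maximiser $\eta_f$ into a small $L^\infty$-ball around $0$ (Lemmas~\ref{lem:stability} and \ref{lem:optimallinfinity}), and finally establishes strict concavity of $\eta\mapsto\Psi_h(f,\eta)$ on that ball to get uniqueness and the H\"older bound $\|\eta_f\|_\infty\leq C\|f-h\|_\infty^\xi$ (Proposition~\ref{prop:strictconvexity2}). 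Only with this control in hand can one split $|\Psi_h(h,0)-\Psi_h(f,\eta_f)|$ and run the Taylor argument you describe; the explicit verification that the first variation in $f$ vanishes (the computation \eqref{eq:finalvariation}) is then a separate calculation. Your integration-by-parts suggestion for $L_g$ does not obviously produce this cancellation, and without the localisation of the maximising plane the linear term $L_g$ cannot be estimated uniformly in $(v,w)$.
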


In this sense, even though $\tilde\omega$ seems not be a global calibration, it is almost a calibration near the hemisphere $\bS^2_+$. This allows to estimate the filling area of $\bS^1$ among surfaces that are close to $\bS^2_+$ with respect to the Gromov-Hausdorff distance.

\begin{thm}
	\label{thm:mainthm2}
For every $r \in (0,\frac{\pi}{2})$ and $\xi \in (1,2)$, there exists $C > 0$ such that the following holds. Let $X$ be a metric space, and let $S,T \in \cR_2(X)$ be real rectifiable currents with compact support. Assume that:
\begin{enumerate}
	\item $S$ is isometric to $\curr{\bS^2_+}$.
	\item $\partial T = \partial S$ (which is isometric to $\curr{\bS^1}$ by (1)).
	\item $T \res N_r = S \res N_r$ for the $r$-neighborhood $N_r$ of $\bS^1$ inside $\spt(S)$.
\end{enumerate}
Then
\[
\bM_{\rm ir}(T) \geq 2\pi - C h(\spt(T),\spt(S))^\xi .
\]
\end{thm}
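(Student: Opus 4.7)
The plan is to reduce to the ambient space $E(S^1)$ and then combine the almost-calibration property of the exact two-form $\tilde\omega$ with a Stokes-type equality $T(\tilde\omega)=S(\tilde\omega)=2\pi$. The two key ingredients will be the exactness of $\tilde\omega$ noted in the introduction, which yields a primitive $1$-form $\eta$ well-defined away from $S^1$, and Theorem~\ref{thm:mainthm1} to control $\|\tilde\omega_f\|_{\rm ir}$ on $\spt(T)$.

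First I would pass to $E(S^1)$. Composing the given isometry $\spt(S)\to S^2_+$ with the canonical embedding $S^2_+\hookrightarrow E(S^1)$ from Proposition~\ref{prop:injectivehull} gives a $1$-Lipschitz map $\spt(S)\to E(S^1)$ which is an isometry and restricts to the standard $S^1\subset E(S^1)$ on $\spt(\partial S)$. By injectivity of $E(S^1)$, extend to $\bar\varphi:X\to E(S^1)$. The push-forwards $\bar\varphi_\# S,\bar\varphi_\# T$ satisfy the three hypotheses in $E(S^1)$ with $\bar\varphi_\# S$ the canonical hemisphere current, while $\bM_{\rm ir}(\bar\varphi_\# T)\le\bM_{\rm ir}(T)$ and $d_\Haus(\spt\bar\varphi_\# S,\spt\bar\varphi_\# T)\le d_\Haus(\spt S,\spt T)$; so I may assume $X=E(S^1)$ and $S=\curr{S^2_+}$.

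Set $d=d_\Haus(\spt S,\spt T)$ and assume $d<r/2$, inflating $C$ otherwise. A first key point is that $\spt(T-S)\subset\{f:\dist(f,S^1)\ge r/2\}$: by (3) one has $(T-S)\res N_r=0$; points of $\spt(S)\setminus N_r$ are at distance $\ge r$ from $S^1$; and any $f\in\spt(T)\setminus\spt(S)$ within $r/2$ of $S^1$ would be within $d<r/2$ of some $y\in\spt(S)$ with $\dist(y,S^1)<r$, i.e.\ $y\in N_r$, where (3) locally identifies $T$ and $S$ and forces $f\in\spt(S)$, a contradiction. Since each integrand $p_{\alpha,\beta}(f)\,df_\alpha\wedge df_\beta$ depends only on $f_\alpha,f_\beta$, it admits a bounded Lipschitz primitive on this region, and a standard Stokes identity for Ambrosio--Kirchheim normal currents then yields
\[
(T-S)(\tilde\omega)=\partial(T-S)(\eta)=0,
\]
so $T(\tilde\omega)=S(\tilde\omega)$. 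On $S^2_+\setminus S^1$ one has $\|\tilde\omega_f\|_{\rm ir}=1$ by Theorem~\ref{thm:mainthm1} at zero distance, and $\tilde\omega$ realises the inner Riemannian area form on tangent $2$-planes, so $S(\tilde\omega)=\bM_{\rm ir}(\curr{S^2_+})=2\pi$. For a general $f\in\spt(T)$, the same dichotomy gives $\|\tilde\omega_f\|_{\rm ir}\le 1+C'd^\xi$, via Theorem~\ref{thm:mainthm1} with radius $r/2$ on $\{\dist(f,S^1)\ge r/2\}$ (using $\dist(f,S^2_+)\le d$) and trivially on the complement where $f\in S^2_+$. The comass inequality then produces
\[
2\pi=T(\tilde\omega)\le(1+C'd^\xi)\bM_{\rm ir}(T),
\]
which is the desired estimate with $C=C'$.

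The main obstacle is the Stokes identity $(T-S)(\tilde\omega)=0$ in the metric-current setting: the coefficients of $\tilde\omega$ blow up at $S^1$, so one must pin $\spt(T-S)$ to a definite distance from $S^1$ (via the delicate bookkeeping with (2), (3) and the Hausdorff bound above) and justify $(T-S)(d\eta)=\partial(T-S)(\eta)$ rigorously by cutoff approximations of $\eta$ that exploit the locally bounded Lipschitz structure of the primitive on $\{\dist(\cdot,S^1)\ge r/2\}$. A secondary subtlety is the equality $S(\tilde\omega)=2\pi$, which needs $\tilde\omega$ to realise the inner Riemannian area on the tangent planes of the hemisphere, not merely to have unit comass there.
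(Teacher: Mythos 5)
Your overall skeleton (pass to $E(S^1)$ by injectivity, use exactness of $\tilde\omega$ to get $T(\tilde\omega)=\curr{S^2_+}(\tilde\omega)=2\pi$, then bound $T(\tilde\omega)$ by $\bM_{\rm ir}(T)\sup_{\spt(T)}\|\tilde\omega_f\|_{\rm ir}$) is the paper's approach, but the step on which everything hinges is wrong: the localization claim $\spt(T-S)\subset\{f:\dist(f,S^1)\geq r/2\}$, and the accompanying dichotomy that every $f\in\spt(T)$ with $\dist(f,S^1)<r/2$ lies in $S^2_+$, do not follow from the hypotheses. Hypothesis (3) only states $T\res N_r=S\res N_r$ with $N_r\subset\spt(S)$; restriction to $N_r$ sees nothing of $T$ outside $\spt(S)$, so "(3) locally identifies $T$ and $S$ and forces $f\in\spt(S)$" is a non sequitur. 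Concretely, take $T=S+R$ where $R$ is a small rectifiable $2$-cycle (a tiny closed surface) whose support is disjoint from $\spt(S)$, lies within distance $d$ of $S^2_+$, but at distance $r/4$ from $S^1$: then $\partial T=\partial S$, $T\res N_r=S\res N_r$, and $d_\Haus(\spt S,\spt T)\leq d$ (every point of $\spt(R)$ is near $\spt(S)$, and $\spt(S)\subset\spt(T)$), yet $\spt(T-S)=\spt(R)$ is well inside the forbidden collar. This breaks both of your key steps: the Stokes identity needs $\spt(T-S)$ at a definite distance from $S^1$ where the primitive is Lipschitz, and the comass bound $\|\tilde\omega_f\|_{\rm ir}\leq 1+C'd^{\xi}$ on all of $\spt(T)$ fails because the constant in Theorem~\ref{thm:mainthm1} depends on the lower bound for $\dist(f,S^1)$ and blows up (the coefficients grow like $\sin(\dist(f,S^1))^{-2}$) as $f$ approaches $S^1$.

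The paper circumvents exactly this difficulty by never trying to control $\spt(T)$ near $S^1$: it composes the Lipschitz extension $\bar\varphi$ with a $1$-Lipschitz extension $\bar\rho_r$ of the radial retraction of $S^2_+$ onto $S_r=S^2_+\cap E_r(S^1)$ (Gauss's lemma plus Lemma~\ref{lem:truncated}(4)), so the pushed-forward current $T'=\psi_\#T$ has support in $E_r(S^1)\cap\B(S_r,d)$ where Theorem~\ref{thm:mainthm1} applies with a uniform constant; the collar piece $\curr{S^2_+\setminus S_r}$, which coincides with the corresponding piece of $T$ pushed by $\bar\varphi$, is handled separately using that $\tilde\omega$ calibrates the hemisphere exactly, and the mass only decreases under the $1$-Lipschitz push-forwards by Lemma~\ref{lem:masslem}(5). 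If you want to salvage your argument you should insert this retraction (or an equivalent device); as written, the proof does not go through.
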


Here $h(A,B)$ denotes the directed Hausdorff distance from $A$ to $B$, defined by
\[
h(A,B) \defl \mathop{\sup}_{a \in A} \inf_{\raisebox{-0.56ex}{$\scriptstyle b \in B$}} d(a,b).
\]
The key features of the theorem are that $\xi > 1$ and that competing surfaces may have arbitrary topological type. To emphasize this, take $X = E(\bS^1)$ and $0 < r < \frac{\pi}{2}$ and consider the open set
\[
U \defl \{x \in X : d(\bS^1,x) > r > d(\bS_+^2,x)\} \subset E(\bS^1).
\]
Let $Z \in \cR_2(X)$ be an arbitrary cycle in $U$, that is, $\spt(Z) \subset U$ and $\partial Z = 0$. Then the mass bound of the theorem applies to $T = \curr{\bS_+^2} + Z$. In particular, any metric surface obtained by attaching handles to $\bS^2_+$ inside $U$ is of this type. Consequently, the hemisphere $\bS^2_+$ is stationary under variations by surfaces of arbitrary topological type, provided a collar neighborhood of the boundary is fixed.

\begin{cor}
	\label{cor:maincor}
Assume that $X$ is a metric space and that $r \in \left(0, \frac{\pi}{2}\right)$ is fixed. Let $T_n, S \in \cR_2(X)$ for $n \in \N$, and suppose that:
\begin{enumerate}
	\item $S$ is isometric to $\curr{\bS^2_+}$.
	\item $\partial T_n = \partial S$.
	\item $T_n \res N_r = S \res N_r$, where $N_r$ denotes the $r$-neighborhood of $\spt(\partial S)$ inside $\spt(S)$.
	\item $\lim_{n \to \infty} h(\spt(T_n), \spt(S)) = 0$ and $T_n \neq S$ for all $n$.
\end{enumerate}
Then
\[
\limsup_{n \to \infty} \frac{\bM_{\rm ir}(S) - \bM_{\rm ir}(T_n)}{h(\spt(T_n), \spt(S))} \leq 0.
\]
In particular, $\liminf_{n \to \infty} \bM_{\rm ir}(T_n) \geq 2\pi$.
\end{cor}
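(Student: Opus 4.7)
My plan is to invoke Theorem~\ref{thm:mainthm2} directly: hypotheses (1)--(3) of the corollary are verbatim those of the theorem for each pair $(S, T_n)$. Fix any $\xi \in (1,2)$ and let $C = C(r,\xi) > 0$ be the constant supplied by the theorem, and abbreviate $\epsilon_n := d_\Haus(\spt(T_n), \spt(S))$. Then for every $n$,
\[
2\pi \leq \bM_{\rm ir}(T_n)\bigl(1 + C\epsilon_n^\xi\bigr),
\]
which, using $\bM_{\rm ir}(S) = 2\pi$, rearranges into $\bM_{\rm ir}(S) - \bM_{\rm ir}(T_n) \leq C\,\bM_{\rm ir}(T_n)\,\epsilon_n^\xi$.

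The only genuine step is a short case split that removes the factor $\bM_{\rm ir}(T_n)$ from the right side uniformly in $n$. If $\bM_{\rm ir}(T_n) \leq 2\pi$ the right side is $\leq 2\pi C \epsilon_n^\xi$ immediately; if instead $\bM_{\rm ir}(T_n) > 2\pi$ then the left side is already negative, hence trivially $\leq 2\pi C \epsilon_n^\xi$. Thus universally
\[
\bM_{\rm ir}(S) - \bM_{\rm ir}(T_n) \leq 2\pi C \epsilon_n^\xi.
\]
Dividing by $\epsilon_n > 0$ (indices with $\epsilon_n = 0$ contribute nothing) and invoking (4) together with $\xi - 1 > 0$ yields
\[
\frac{\bM_{\rm ir}(S) - \bM_{\rm ir}(T_n)}{\epsilon_n} \leq 2\pi C \epsilon_n^{\xi-1} \xrightarrow[n\to\infty]{} 0.
\]
Taking $\limsup$ on both sides proves the main assertion.

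For the ``specifically'' part, under the added hypothesis $\bM_{\rm ir}(T_n) \to 2\pi$, the upper bound above still gives $\limsup \leq 0$, while the mass convergence forces the numerator itself to tend to $0$. To obtain the matching $\liminf \geq 0$ I would split the index set by the sign of $\bM_{\rm ir}(S) - \bM_{\rm ir}(T_n)$: on the nonnegative-numerator subsequence the ratio is squeezed between $0$ and $2\pi C \epsilon_n^{\xi-1}$ and so tends to $0$; on the negative-numerator subsequence the assumed mass convergence makes the numerator $o(1)$ from below, and combining this with the quantitative lower bound $\bM_{\rm ir}(T_n) \geq 2\pi/(1 + C\epsilon_n^\xi)$ built into the theorem pins any subsequential limit at $0$.

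I foresee essentially no real obstacle; the corollary is formal bookkeeping from Theorem~\ref{thm:mainthm2}. The only mildly delicate point is the two-sided squeeze in the second assertion, where one must combine the theorem's quantitative mass lower bound with the extra mass-convergence hypothesis to rule out the negative-numerator subsequence from contributing a negative liminf.
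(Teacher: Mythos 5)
Your derivation of the main assertion is correct and is exactly the intended route: the paper offers no separate proof of the corollary, treating it as immediate bookkeeping from Theorem~\ref{thm:mainthm2}, and your chain $2\pi \leq \bM_{\rm ir}(T_n)(1 + C\epsilon_n^{\xi})$, the case split to replace $\bM_{\rm ir}(T_n)$ by $2\pi$ on the right, division by $\epsilon_n$ and the use of $\xi - 1 > 0$ is the natural way to carry this out.

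The treatment of the second assertion, however, has a genuine gap. Theorem~\ref{thm:mainthm2} only yields $2\pi - \bM_{\rm ir}(T_n) \leq 2\pi C\epsilon_n^{\xi}$, i.e.\ an upper bound on the numerator and hence on the ratio; the ``quantitative lower bound $\bM_{\rm ir}(T_n) \geq 2\pi/(1 + C\epsilon_n^{\xi})$'' is just a restatement of this and gives no control from below. On the subsequence where $\bM_{\rm ir}(T_n) > 2\pi$ the hypothesis $\bM_{\rm ir}(T_n) \to 2\pi$ only makes the numerator $o(1)$, not $o(\epsilon_n)$, and since $\epsilon_n \to 0$ as well this does not pin subsequential limits of the ratio at $0$: nothing in the quoted inequalities excludes, say, $\bM_{\rm ir}(T_n) - 2\pi$ of order $\sqrt{\epsilon_n}$, for which your squeeze fails and the ratio would even diverge to $-\infty$. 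To conclude $\liminf \geq 0$ one must bound the mass excess $\bM_{\rm ir}(T_n) - \bM_{\rm ir}(S)$ by $o\bigl(d_\Haus(\spt(S),\spt(T_n))\bigr)$, or know $\bM_{\rm ir}(T_n) \leq 2\pi$ (so the numerator is nonnegative and the one-sided estimate already squeezes the ratio); neither is supplied by the argument you give, so the ``specifically'' part is not established by the bookkeeping you describe.
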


We now provide a brief overview of the proof of Theorem~\ref{thm:mainthm1}. The hemisphere
\[
\bS^2_+ \defl \left\{(x,y,z)\in \R^3 : x^2 + y^2 + z^2 = 1, z \geq 0\right\}
\]
with its intrinsic length metric induced by the standard Euclidean distance of $\R^3$ is represented uniquely in $E(\bS^1)$ as those functions $h : \R \to \R$ with $h_\alpha = \arccos(\cos(d)\cos(\alpha - \tau))$ for parameters $\tau \in (-\pi,\pi]$ and $d \in [0,\frac{\pi}{2}]$, Lemma~\ref{lem:isometricembedding}. Its boundary, the representation of $\bS^1$ inside $E(\bS^1)$, corresponds to those functions with $d = 0$. The coefficient function $p_{\alpha,\beta}$ of $\tilde \omega$ at a point $h \in \bS^2_+\setminus \bS^1$, with parameters $\tau$ and $d$, possess the product structure
\[
p_{\alpha,\beta}(h) = \frac{\sin(d)}{1-\cos(d)^2\cos(\alpha - \tau)^2}\frac{\sin(d)}{1-\cos(d)^2\cos(\beta - \tau)^2} = p_\alpha(h)p_\beta(h) ,
\]
with
\begin{equation}
\label{eq:intro1}
\int_0^{2\pi}p_{\alpha}(h)\,d\alpha = 2\pi ,
\end{equation}
see Lemma~\ref{lem:tangentsphere} and Lemma~\ref{lem:geometriccoeff}. The inscribed Riemannian comass of $\tilde\omega$ at $f \in E(\bS^1)\setminus \bS^1$ can be expressed as
\[
\|\tilde\omega_f\|_{\rm ir} = \sup \left\{\tilde\omega_f(v\wedge w) : (v,w) \in L^\infty([0,\pi))^2, \|v^2 + w^2\|_\infty \leq 1\right\} ,
\]
see Proposition~\ref{prop:massbound}. We will also write $\tilde \omega_f(\gamma)$ for paths $\gamma = (v,w) : [0,\pi) \to \R^2$ as above. They are extended to $[0,2\pi)$ by $\gamma(t+\pi) = -\gamma(\pi)$, reflecting a symmetry inherited from the structure of $E(\bS^1)$. Because of the product structure of $p_{\alpha,\beta}(h)$ for $h \in \bS^2_+\setminus \bS^1$, the maximization problem for $\|\tilde\omega_h\|_{\rm ir}$ reduces to the classical isoperimetric inequality in the plane, as captured in Lemma~\ref{lem:prodtype}. Up to rotations of $\R^2$, there exists a unique maximizer $\gamma$ attaining $\|\tilde\omega_h\|_{\rm ir} = 1$; moreover, $\gamma$ parametrizes the unit circle. Consequently, there exists a unique bi-Lipschitz function $\nu_h : [0,\pi) \to [0,\pi)$ such that $\tilde\omega_h(e^{i\nu_h}) = \|\omega_{h}\|_{\rm ir} = 1$. 

For arbitrary $f \in E(\bS^1)\setminus \bS^1$, the existence of a maximizing path $(v,w)$ follows from the Banach-Alaoglu theorem, using the weak$\ast$ compactness of the unit ball in $L^\infty([0,\pi)) = L^1([0,\pi))^\ast$, see Lemma~\ref{lem:compactness}. Under mild assumptions on $f$, any such maximizer additionally satisfies  $v_\alpha^2 + w_\alpha^2 = 1$ for almost every $\alpha$, Lemma~\ref{lem:gammaboundary}. Given $h \in \bS^2_+$, a maximizing path for $\tilde\omega_f$ can thus be written in the form $e^{i(\nu_h + \eta)}$ for some function $\eta$ in
\[
L_0^2 \defl \left\{\eta \in L^2([0,\pi)) : \int_0^\pi \eta = 0 \right\}.
\]
The value $\|\omega_f\|_{\rm ir}$ is thus attained as the maximum of $\eta \mapsto \Psi_h(f,\eta)$, where
\[
\Psi_h : E(\bS^1)\setminus \bS^1 \times L_0^2 \to \R
\]
is defined by
\[
\Psi_h(f,\eta) \defl \frac{1}{\pi}\int_0^\pi\int_\alpha^\pi p_{\alpha,\beta}(f) \sin(\nu_h(\beta) - \nu_h(\alpha) + \eta(\beta) - \eta(\alpha))\,d\beta\,d\alpha .
\]
The space $L_0^2$ is natural here because second variations of $\Psi_h$ with respect to $\eta$ are naturally controlled by the $L^2$-norm, and the zero-mean condition eliminates the rotational invariance by fixing the phase.

To conclude Theorem~\ref{thm:mainthm1}, we require an implicit function theorem. Specifically, we need a map $f \mapsto \eta_f$ such that $\eta_f \in L^2_0$ is a maximizer of $\eta \mapsto \Psi_h(f,\eta)$ and $\|\eta_f\|_\infty$ depends continuously on $\|f-h\|_\infty$. This is established in Lemma~\ref{lem:optimallinfinity}. The crucial ingredient is the stability of the isoperimetric inequality for planar paths, in the form established by Fuglede \cite{Fu}. This yields the estimate $\|\eta_f\|_\infty \leq C(h,\xi) \|f - h\|_\infty^{\xi/2}$ for some maximizer $\eta_f$ in case $\|f - h\|_\infty$ is small enough. Moreover, if $\varepsilon$ is sufficiently small and $\|f-h\|_\infty \leq \varepsilon$, then $\eta \mapsto \Psi_h(f,\eta)$ is strictly concave in the sense that
\[
\frac{d^2}{dt^2}\Psi_h(f,(1-t)\eta_0 + t\eta_1) \leq -c\|\eta_1 - \eta_0\|_2^2,
\]
for some $c(h) > 0$ and all $\eta_0,\eta_1 \in L^2_0 \cap \B^\infty(0,\varepsilon)$, $t \in [0,1]$, Lemma~\ref{lem:strictconcavity}. Consequently, $\eta_f$ is the unique maximizer in $L^2_0 \cap \B^\infty(0,\varepsilon)$. The remaining task is to reconcile the roles of the $L^\infty$ and $L^2$ norms appearing above, in order to derive the refined bound $\|\eta_f\|_\infty \leq C(h,\xi) \|f - h\|_\infty^\xi$ for any $\xi \in (0,1)$. This is established in Proposition~\ref{prop:strictconcavity2} and allows us to perform actual variations of the function $f \mapsto \Psi_h(f,\eta_f)$ at $f = h$, thereby completing the proof of Theorem~\ref{thm:mainthm1}. The second main Theorem~\ref{thm:mainthm2} is a direct consequence of the first one and the results about rectifiable currents and Finsler mass in Subsection~\ref{sec:finslermass}.

\section{Setting}

\subsection{Metric currents and Finsler mass}
\label{sec:finslermass}

Metric currents, as introduced by Ambrosio and Kirchheim \cite{AK}, are functionals acting on tuples of Lipschitz functions and generalize the classical Euclidean currents originally developed by Federer and Fleming \cite{FF,F} and de Rham \cite{DR}. Since we only need currents with compact support, an equivalent definition is due to Lang \cite{L}. See \cite[Definition~2.2]{Z} for the same set of axioms used below. 

\begin{defi}[Metric currents with compact support]
	\label{metric_current_def}
	Let $X$ be a metric space and $n \geq 0$. A multilinear functional $T : \Lip(X)^{n+1} \to \R$ is a current in $\cD_n(X)$ if the following axioms hold:
	\begin{enumerate}
		\item $T(f,g_1,\dots,g_n) = 0$ if some $g_i$ is constant in a neighborhood of $\spt(f)$.
		\item $\lim_{k\to\infty} T(f_k,g_{1,k},\dots,g_{n,k}) = T(f,g_1,\dots,g_n)$ if $f_k \to f$, $g_{i,k} \to g_i$ uniformly for all $i$ and $\sup_{i,k}\{\Lip(f_k),\Lip(g_{i,k})\} < \infty$.
		\item There exists a compact set $K \subset X$ such that $T(f,g_1,\dots,g_n) = 0$ whenever $\spt(f) \cap K = \emptyset$.
	\end{enumerate}
\end{defi}

The support $\spt(T)$ of $T$ is the intersection of all closed sets $A \subset X$ with the property that $T(f,g_1,\dots,g_n) = 0$ whenever $\spt(f) \cap A = \emptyset$. See \cite[Lemma~2.3]{Z} for more details on the support related to the axioms above. Assuming $n \geq 1$, the boundary $\partial T \in \cD_{n-1}(X)$ of $T \in \cD_n(X)$ is defined by
\[
\partial T(f,g_1,\dots,g_{n-1}) \defl T(1,f,g_1,\dots,g_{n-1}) .
\]
If $\varphi : X \to Y$ is a Lipschitz map between metric spaces, then the pushforward $\varphi_\# : \cD_n(X) \to \cD_n(Y)$ is defined by
\[
(\varphi_\# T)(f,g_1,\dots,g_{n}) \defl T(f\circ \varphi ,g_1 \circ \varphi,\dots,g_{n}\circ \varphi) .
\]
The mass of a current $T \in \cD_n(X)$ is defined by
\[
\bM(T) \defl \sup \sum_{\lambda \in \Lambda} T\bigl(f_\lambda,g_{1,\lambda},\dots,g_{n,\lambda}\bigr) ,
\]
where the supremum is taken over all finite collections $\Lambda$ such that $(f_\lambda,g_{1,\lambda},\dots,g_{n,\lambda})$ is in $\Lip(X)^{n+1}$, each $g_{i,\lambda}$ is $1$-Lipschitz and $\sum_{\lambda \in \Lambda} |f_\lambda| \leq 1$.

For example if $\theta \in L^1(\R^n)$ has (essentially) compact support, then $\curr{\theta} \in \cD_n(\R^n)$ is defined by integration
\[
\curr{\theta}(f,g_1,\dots,g_n) \defl \int_{\R^n} \theta(x)f(x)\det(D(g_1,\dots,g_n)_x)\,dx , 
\]
and satisfies $\bM(\curr{\theta}) = \int_{\R^n}|\theta(x)|\,dy$. This is justified by \cite[Example~3.2]{AK}.

By combining \cite[Theorem~4.5]{AK} with \cite[Lemma~4]{K}, rectifiable currents can be characterized as follows.

\begin{defi}
	\label{def:rectifiable}
$T \in \cD_n(X)$ is an $n$-dimensional real rectifiable current in $\mathcal R_n(X)$ if it has finite mass and for any $\lambda > 1$ there exist a sequence $K_i$ of compact sets in $\R^n$, functions $\theta_i \in L^1(K_i)$, norms $\|\cdot\|_i$ on $\R^n$ and maps $\varphi_i : K_i \to K$ into some compact set $K \subset X$ such that the sets $\varphi_i(K_i)$ are pairwise disjoint,
\[
\lambda^{-1}\|x-y\|_i \leq d(\varphi_i(x),\varphi_i(y)) \leq \lambda\|x-y\|_i ,
\]
\[
T = \sum_{i=0}^\infty \varphi_{i\#} \curr{\theta_i} \qquad \text{and} \qquad \bM(T) = \sum_{i=0}^\infty \bM(\varphi_{i\#} \curr{\theta_i}) .
\]
\end{defi}

Next we want to define a notion of Finsler mass on rectifiable currents that depends on a specific definition of volume. As we will see, the Ambrosio-Kirchheim mass is induced by the Gromov-mass$\ast$ (or Benson) volume.

\begin{defi}
	\label{def:area}
	Given $n \in \N$, a Finsler volume assigns to every $n$-dimensional normed space $V$ a Haar measure $\mu_V$ with the properties:
	\begin{enumerate}
		\item If $V$ and $W$ are $n$-dimensional normed spaces and $A : V\to W$ is a linear map with $\|A\|\leq 1$, then $A$ is volume decreasing, that is, $\mu_W(A(B)) \leq \mu_V(B)$ for all Borel sets $B \subset V$.
		\item If $V$ is Euclidean, then $\mu_V$ is the standard Lebesgue measure.
	\end{enumerate}
\end{defi}

This is equivalent to the definition given in \cite[\S3]{APT}, where, instead of a Haar measure, a norm, also denoted by $\mu_V$, is assigned to the one-dimensional space $\bigwedge_n V$. The equivalence is induced by the identity
\[
\mu_V(P(v_1,\dots,v_n)) = \mu_V(v_1 \wedge \cdots \wedge v_n) ,
\]
where $P(v_1,\dots,v_n)$ is the parallelepiped spanned by the vectors $v_1,\dots,v_n \in V$.

If $s$ is a seminorm on $\R^n$ with standard basis $e_1,\dots,e_n$, the Jacobian of $s$ is
\begin{equation}
	\label{eq:defjacobian0}
\mathbf{J}_{\mu}(s) \defl
\left\{
\begin{array}{ll}
\mu_s(e_1 \wedge \cdots \wedge e_n)  & \text{if $s$ is a norm} , \\
0 & \text{otherwise} .
\end{array}
\right.
\end{equation}
Or equivalently, in case $s$ is a norm,
\begin{equation}
\label{eq:defjacobian}
\mathbf{J}_{\mu}(s) = \frac{\mu_s(B)}{\cL^n(B)}
\end{equation}
for every Borel set $B \subset \R^n$ with positive and finite Lebesgue measure $\mathcal{L}^n(B)$.

Let $(V,\|\cdot\|)$ be a normed space of dimension $n$ with unit ball $\B_V$ and dual space $(V^\ast,\|\cdot\|^\ast)$. $\bm\alpha(n)$ denotes the Lebesgue measure of the Euclidean unit ball and $\mathcal E_V \subset \B_V$ is the inscribed L\"owner-John ellipsoid. This is the unique ellipsoid of largest volume contained in $\B_V$. Below is a list of defining properties for those definitions of volume we need, see for example \cite[\S3]{APT}:
\begin{itemize}
	\item (Gromov-mass$\ast$ or Benson)
	\[
	\mu_V^{\rm m\ast}(\nu) = \sup\{|\langle \xi_1\wedge\cdots\wedge\xi_n, \nu \rangle| : \|\xi_i\|^\ast \leq 1\} .
	\]
	\item (Busemann-Hausdorff)
	\[
	\mu_V^{\rm bh}(\B_V) = \bm\alpha(n) .
	\]
	\item(Inscribed Riemannian)
	\[
	\mu_V^\text{\rm ir}(\mathcal E_V) = \bm\alpha(n) .
	\]
\end{itemize}
The inscribed Riemannian volume, introduced by Ivanov \cite{Iv}, serves as the main volume definition in the present work. Properly normalized, the $n$-dimensional Hausdorff measure $\cH^n$ coincides with $\mu^{\rm bh}$; that is, $\cH^n(B) = \mu_V^{\rm bh}(B)$ for all Borel sets $B$ in an $n$-dimensional normed space $V$, see, for example, \cite[Lemma~6]{K} and the references therein.

\begin{defi}
	\label{def:finslermass}
Any Finsler volume $\mu$ on $n$-dimensional normed spaces gives rise to a Finsler mass $\bM_\mu$ for rectifiable currents $T \in \cR_n(X)$ as follows. Given bi-Lipschitz parametrizations $\varphi_i : K_i \to X$ and densities $\theta_i$ for $T \in \cR_n(X)$ as in Definition~\ref{def:rectifiable}, the $\mu$-mass of $T$ is defined by
\[
\bM_\mu(T) \defl \sum_i \int_{K_i} |\theta_i(x)| \mathbf{J}_{\mu}(\operatorname{md}(\varphi_{i})_x) \, d\cL^n(x) ,
\]
where $\operatorname{md}(\varphi_{i})_x$ is the (approximate) metric derivative of $\varphi_i$ at $x$ as defined in \cite{K}.
\end{defi}

Note that in case $X$ is a Banach space, the maps $\varphi_i : K_i \to X$ can be extended to Lipschitz maps $\bar \varphi_i : \R^n \to X$ due to \cite[Theorem~2]{JLS}. Since $X$ has an isometric embedding into $\ell_\infty(X)$ via the Kuratowski embedding, we can always assume that the maps $\varphi_i$ are defined on all of $\R^n$ and the metric derivatives exist almost everywhere.

We leave it to the reader to show that this definition does not depend on the particular parametrization. With a decomposition argument, it boils down to an application of the area formula \cite[Theorem~3.2.3]{F} and the following chain rule:
\[
\operatorname{md} (\varphi\circ\psi)_{x}(v) = \operatorname{md} \psi_{\varphi(x)}(D\psi_x(v))
\]
for almost every $x \in K_1$ and all $v \in \R^n$, whenever $\psi : K_1 \to K_2$ is bi-Lipschitz, $K_1,K_2 \subset \R^n$ are compact and $\varphi : K_2 \to K$ is Lipschitz. The chain rule follows quite directly from the definition of the metric derivative in \cite{K}. The definition of the Jacobian \eqref{eq:defjacobian0} then implies
\[
\mathbf{J}_{\mu}(\operatorname{md} (\varphi\circ\psi)_{x}) = \mathbf{J}_{\mu}(\operatorname{md} \varphi_{\psi(x)}) |\det D \psi_x|
\]
for almost every $x \in K_1$. Note that the same definition extends to rectifiable sets in metric spaces, and more generally to rectifiable chains with coefficients in a normed abelian group, as developed by De Pauw and Hardt \cite{DPH}.

\begin{lem}
	\label{lem:masslem}
Assume that $\mu$ is a Finsler volume on $n$-dimensional normed spaces, $V$ is an oriented $n$-dimensional normed space, and $X$ is a metric space. The following properties hold for currents in $\cR_n(V)$ and $\cR_n(X)$, respectively:
\begin{enumerate}
	\item If $\theta \in L^1(V)$ has compact support, then
	\[
	\bM_\mu(\curr\theta) = \int_{V} |\theta(x)| \,d\mu_V(x) .
	\]
	\item $\bM = \bM_{\rm m\ast}$.
	\item $C_n^{-1}\bM \leq \bM_\mu \leq C_n \bM$ for some $C > 1$ that depends only on $n$.
	\item $\bM_\text{\rm ir} \geq \bM_\mu$.
	\item $\bM_{\mu}(\psi_\#T) \leq \Lip(\psi)^n\bM_{\mu}(T)$ if $\psi \in \Lip(X,Y)$ and $T \in \cR_n(X)$.
\end{enumerate}
\end{lem}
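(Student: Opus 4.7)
The plan is to verify the five assertions in order by reducing everything, via Definition~\ref{def:rectifiable}, to a single bi-Lipschitz piece $\varphi_i : K_i \to X$ with density $\theta_i$ and then summing. Part (1) is immediate: using the identity $\id$ of $\spt(\theta)$ as parametrization, one has $\operatorname{md}(\id)_x = \|\cdot\|_V$ at every $x$, so by \eqref{eq:defjacobian} the Jacobian $\mathbf{J}_\mu(\operatorname{md}(\id)_x)$ equals the constant density of $\mu_V$ with respect to $\cL^n$, and the integral in Definition~\ref{def:finslermass} becomes $\int|\theta|\,d\mu_V$. Part (2) relies on the Kirchheim mass formula: as established in \cite{AK,K}, for a bi-Lipschitz $\varphi$ and $\theta \in L^1$, the Ambrosio--Kirchheim mass satisfies $\bM(\varphi_\#\curr{\theta}) = \int |\theta|\mathbf{J}_{\rm m\ast}(\operatorname{md}\varphi_x)\,d\cL^n$, which is exactly $\bM_{\rm m\ast}(\varphi_\#\curr\theta)$.

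Parts (3) and (4) are statements about the Jacobian $s \mapsto \mathbf{J}_\mu(s)$ on norms of $\R^n$, both exploiting that any two Haar measures on an $n$-dimensional normed space differ by a multiplicative scalar. For (3), John's theorem yields $\mathcal E_V \subset \B_V \subset \sqrt{n}\,\mathcal E_V$; feeding these inclusions into the volume-decreasing axiom (1) of Definition~\ref{def:area} sandwiches $\mathbf{J}_\mu(s)$ between dimension-dependent constants times $\mathbf{J}_{\rm m\ast}(s)$. For (4), equip $V$ with the Euclidean structure $g$ whose unit ball is $\mathcal E_V$. The inclusion $\mathcal E_V \subset \B_V$ gives $\|\cdot\|_V \leq \|\cdot\|_g$, so the identity $(V,\|\cdot\|_g) \to (V,\|\cdot\|_V)$ is $1$-Lipschitz, and axiom (1) gives $\mu_V(B) \leq \mu_{V_g}(B)$ for every Borel $B$. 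Since $\mu_{V_g}$ and $\mu_V^{\rm ir}$ are Haar measures both assigning $\bm\alpha(n)$ to $\mathcal E_V$, they coincide, so $\mu_V \leq \mu_V^{\rm ir}$ pointwise; integrating over each piece gives $\bM_\mu \leq \bM_{\rm ir}$.

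For part (5), the starting point is the pointwise seminorm inequality $\operatorname{md}(\psi\circ\varphi_i)_x(v) \leq \Lip(\psi)\operatorname{md}(\varphi_i)_x(v)$ coming from the chain rule for the metric derivative, together with two elementary properties of $\mathbf{J}_\mu$: it is monotone in the seminorm order (apply axiom (1) of Definition~\ref{def:area} to the identity between two norms on $\R^n$), and it is $n$-homogeneous, $\mathbf{J}_\mu(\lambda s) = \lambda^n \mathbf{J}_\mu(s)$ (via the isometry $(\R^n,\lambda s) \to (\R^n,s)$, $x\mapsto \lambda x$). Combined, these yield the pointwise bound $\mathbf{J}_\mu(\operatorname{md}(\psi\circ\varphi_i)_x) \leq \Lip(\psi)^n \mathbf{J}_\mu(\operatorname{md}(\varphi_i)_x)$. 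The main obstacle is that $\psi\circ\varphi_i$ need not be bi-Lipschitz, so the integral over $K_i$ is not directly one of the summands in Definition~\ref{def:finslermass} for $\psi_\# T$. This is precisely the issue addressed by the parametrization-invariance remark following that definition: decompose each $K_i$ into countably many Borel pieces on which $\psi\circ\varphi_i$ is either bi-Lipschitz or has non-injective metric derivative, observe that the latter pieces contribute zero since $\mathbf{J}_\mu$ vanishes on degenerate seminorms by \eqref{eq:defjacobian0}, and apply the metric area formula on the good pieces. Summing the pointwise Jacobian bound over $i$ then yields (5).
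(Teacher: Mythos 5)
Your proposal is correct, and for parts (1), (3), (4) and (5) it follows essentially the paper's own route: (1) via a linear chart whose metric derivative is the pulled-back norm, (3) via John's theorem comparing any $\mu$ with the Euclidean structure of the inscribed ellipsoid, (4) via the $1$-Lipschitz identity map from that Euclidean structure together with the normalization of $\mu^{\rm ir}$, and (5) via a pointwise Jacobian estimate combined with a decomposition into bi-Lipschitz pieces on which the degenerate part contributes zero Jacobian — the paper phrases the pointwise bound through $\|D\varphi_x\|^n$ in coordinates and defers the same decomposition (and the fact that possible cancellations in $\psi_\#T$ only help the inequality) to De Pauw--Hardt, so your monotonicity-plus-homogeneity formulation of $\mathbf{J}_\mu$ is only a cosmetic variant at the same level of rigor. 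The one genuinely different step is (2): you quote the identity $\bM(\varphi_\#\curr{\theta})=\int|\theta|\,\mathbf{J}_{\rm m\ast}(\operatorname{md}\varphi_x)\,d\cL^n$ from the literature, whereas the paper proves $\bM=\bM_{\rm m\ast}$ from scratch by computing $\bM(\curr{P})=1=\bM_{\rm m\ast}(\curr{P})$ for the parallelepiped $P$ spanned by the basis predual to an extremizing dual basis, extending to densities by step functions, and then squeezing general rectifiable currents between $\lambda^{\pm 2n}\bM_{\rm m\ast}$ using the $\lambda$-bi-Lipschitz parametrizations and letting $\lambda\to 1$. Your citation route is shorter but borrows the deeper representation result \cite[Theorem~9.5]{AK} (stated in $w^\ast$-separable duals, so one must also embed via Kuratowski) together with the identification of the area factor $\lambda_V$ with $\mu_V^{\rm m\ast}/\mu_V^{\rm bh}$ and Kirchheim's $\cH^n=\mu^{\rm bh}$ — exactly the ingredients the paper introduces only afterwards for Corollary~\ref{cor:masscor} — while the paper's direct computation keeps Lemma~\ref{lem:masslem}(2) self-contained, works in an arbitrary metric space without passing through dual Banach spaces, and costs only an elementary linearization argument.
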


\begin{proof}
(1): We first assume that $\theta = \chi_B$ for some bounded Borel set $B \subset V$ of positive measure. If a coordinate system on $V$ is fixed via an isomorphism $I : \R^n \to V$, then $\operatorname{md}I_{x}$ is the pull-back norm on $\R^n$ denoted by $s$. Let $B' \defl I^{-1}(B)$ and rewrite the definition of $\bM_\mu(\curr B)$ using \eqref{eq:defjacobian} as
\[
\bM_\mu(\curr B) = \int_{B'}\frac{\mu_s(B')}{\cL^n(B')}\, d\cL^n(x) = \mu_s(B') = \mu_V(B) .
\]
The last equality uses the fact that $I : (\R^n,s) \to V$ is a linear isometry. The result for a general weight function $\theta$ follows by approximation with step functions.
	
(2): Fix a basis $\xi_1,\cdots,\xi_n$ of the dual space $V^\ast$ with $\|\xi_i\|^\ast = 1$ and the property that $|\langle\xi_1\wedge\cdots\wedge\xi_n,\nu\rangle| = \mu_V^{\rm m\ast}(\nu)$ for one (and hence all) $\nu \in \BigWedge_n V \setminus \{0\}$. Let $v_1,\dots,v_n \in V$ be the predual basis and $P \defl P(v_1,\dots,v_n)$ be the parallelepiped spanned by it. The set
\[
\{v \in V : |\xi_i(v)| \leq 1 \text{ for all } i\} = \biggl\{\sum_{i}x_iv_i : |x_i| \leq 1 \text{ for all } i\biggr\}
\]
contains the unit ball $\B_V$ and is a homothetic copy of $P$. By (1) and the definitions,
\[
\bM_{\rm m\ast}(\curr P) = \mu_V^{\rm m\ast}(P) = \mu_V^{\rm m\ast}(v_1\wedge\cdots\wedge v_n) = 1 .
\]
$P$ is parametrized by $[0,1]^n \ni (x_1,\dots,x_n)\mapsto x_1v_1 + \cdots + x_nv_n$. With a standard linearization argument, the Ambrosio-Kirchheim mass of $\curr P$ can be expressed as
\begin{align*}
\bM(\curr P) & = \sup_g \curr{P}(1,g) = \sup_g \int_{[0,1]^n} \langle g_1\wedge \cdots \wedge g_n, v_1 \wedge \cdots \wedge v_n\rangle \, d\cL^n = 1 ,
\end{align*}
where the supremum is taken over all linear $g = (g_1,\dots,g_n) : V \to \R^n$ with $\|g_i\|^\ast \leq 1$ for all $i$. This implies that $\bM_{\rm m\ast}(\curr P) = \bM(\curr P)$ and thus $\bM(\curr \theta) = \int_{V} |\theta(x)| \,d\mu_V^{\rm m\ast}(x)$ for all $\theta$ by approximation with step functions. For the general statement, let $T \in \cR_n(X)$ and for $\lambda > 1$ choose a parametrization $(\varphi_i,K_i,\theta_i)$ of $T$ as in Definition~\ref{def:rectifiable}. By the definition of the metric derivative in \cite{K}, it holds
\[
\lambda^{-1}\|v\|_i \leq \operatorname{md}(\varphi_{i})_{x}(v) \leq \lambda\|v\|_i
\]
for all $i$, $v \in \R^n$ and almost every $x \in K_i$. As a consequence of the first property in Definition~\ref{def:area} and the scaling property of a Haar measure, the estimate above implies $\lambda^{-n}\mu^{\rm m\ast}_{\|\cdot\|_i} \leq \mu^{\rm m\ast}_{\operatorname{md}(\varphi_{i})_{x}} \leq \lambda^n\mu^{\rm m\ast}_{\|\cdot\|_i}$ and with \eqref{eq:defjacobian} also $\lambda^{-n}\mathbf{J}_{\rm m\ast}(\|\cdot\|_i) \leq \mathbf{J}_{\rm m\ast}(\operatorname{md}(\varphi_{i})_{x}) \leq \lambda^n\mathbf{J}_{\rm m\ast}(\|\cdot\|_i)$. Since
\[
\int_{K_i}|\theta_i(x)| \mathbf{J}_{\rm m\ast}(\|\cdot\|_i)\, d\cL^n(x) = \int_{K_i}|\theta_i(x)|\,d\mu_{\|\cdot\|_i}^{\rm m\ast}(x) = \bM(\curr{\theta_i}) ,
\]
we conclude
\begin{align*}
\bM(T) & \leq \lambda^n \sum_i \bM(\curr{\theta_i}) \leq \lambda^{2n}\sum_i \int_{K_i}|\theta_i(x)| \mathbf{J}_{\rm m\ast}(\operatorname{md}(\varphi_{i})_{x})\, d\cL^n(x) \\
 & = \lambda^{2n}\bM_{\rm m\ast}(T).
\end{align*}
The lower bound $\lambda^{-2n}\bM_{\rm m\ast}(T)$ is obtained by a similar argument. Since this holds for all $\lambda > 1$, we conclude (2).
	
(3): By a result of John \cite{J}, we have the inclusions
\[
\mathcal E_V \subset \B_V \subset n^\frac{1}{2}\mathcal E_V .
\]
Let $e$ be the Euclidean norm on $V$ whose unit ball is $\B_e = \mathcal E_V$. These inclusions imply
\[
e(v) \geq \|v\| \geq n^{-\frac{1}{2}}e(v)
\]
for all $v \in V$. The two properties of volumes in Definition~\ref{def:area} justify
\begin{align*}
\bm\alpha(n) & = \mu_e(\mathcal E_V) \leq n^{\frac{n}{2}} \mu_V(\mathcal E_V) \leq n^\frac{n}{2}\mu_e(\mathcal E_V) = n^\frac{n}{2}\bm\alpha(n) .
\end{align*}
Hence $\mu_1 \leq n^\frac{n}{2}\mu_2$ for any two definitions of volume, and by \eqref{eq:defjacobian}, the statement follows for $C_n = n^\frac{n}{2}$.
	
(4): If $e \geq \|\cdot\|$ is the Euclidean norm on $V$ as above, then
\[
\mu_V^{\rm ir}(\mathcal E_V) = \mu_e(\mathcal E_V) \geq \mu_V(\mathcal E_V) .
\]
Consequently, $\mu^{\rm ir} \geq \mu$ and also $\bM_{\rm ir} \geq \bM_\mu$.

(5): This is a consequence of Definition~\ref{def:area}(1). More precisely, with a decomposition of a parametrization into smaller compact sets it boils down to the following chain rule argument. Assume that $K_X \subset V_X$ and $K_Y \subset V_Y$ are compact subsets of $n$-dimensional normed spaces, $\varphi_X : K_X \to X$ and $\varphi_Y : K_Y \to Y$ are bi-Lipschitz embeddings with bi-Lipschitz constants bounded by $\lambda > 1$ and assume that $\psi : \varphi_X(K_X) \to \varphi_Y(K_Y)$ is bi-Lipschitz too. Then $\varphi \defl \varphi_Y^{-1}\circ \psi \circ \varphi_X : K_X \to K_Y$ satisfies $\Lip(\varphi) \leq \lambda^2 \Lip(\psi)$. Assume that $x \in K_X$ is a point of approximate differentiability of $\varphi$, then for any Borel set $B \subset V_X$ of positive and finite measure
\begin{align*}
\frac{\mu_{V_Y}(D\varphi_x(B))}{\mu_{V_X}(B)} \leq \|D\varphi_x\|^n \leq \Lip(\varphi)^n \leq \lambda^{2n}\Lip(\psi)^n .
\end{align*}
Since, for parametrizations as in Definition~\ref{def:rectifiable}, we can choose $\lambda > 1$ arbitrary close to $1$, the result follows from (1). The details for this decomposition argument into bi-Lipschitz pieces $\psi : \varphi_X(K_X) \to \varphi_Y(K_Y)$ is given in \cite[\S3.5]{DPH} in a more general setting and builds on \cite[Lemma~3.1.1]{DPH} applied to $\psi \circ \varphi_i$, where $\varphi_i$ is part of a parametrization for $T$ as in Definition~\ref{def:rectifiable}.
\end{proof}

To estimate the action of differential forms on rectifiable currents, a suitable notion of tangent spaces is needed. Such tangent spaces exist for $L^\infty([0,\pi))$ because it is the dual of the separable Banach space $L^1([0,\pi))$. The result we use here is \cite[Theorem~3.5]{AK2}, which states that If $f : \R^n \to Y$ is Lipschitz, where $Y = X^\ast$ is tue dual of a separable Banach space $X$, then $f$ is weak$\ast$ differentiable at almost every point $x \in \R^n$. More precisely, there exists a linear map $\operatorname{wd}f_x : \R^n \to Y$ such that
\[
\text{w}^\ast\text{-}\lim_{y\to x} \frac{f(y) - f(x) - \operatorname{wd}f_x(y-x)}{|y-x|} = 0
\]
and $\|\operatorname{wd}f_x(v)\| = \operatorname{md}f_x(v)$ for all $v \in \R^n$. By \cite[Theorem~9.1]{AK}, any $T \in \cR_n(Y)$ admits a representation $\curr{S,\theta,\tau}$, where $S \subset Y$ is a countably $\cH^n$-rectifiable Borel set, $\theta : S \to (0,\infty)$ is a Borel function with $\int_S\theta\,d\cH^n < \infty$ and $\tau : S \to \BigWedge_n Y$ is an orientation such that
\begin{equation}
	\label{eq:currentrepres}
T(f,g_1,\dots,g_n) = \int_S \theta(x)f(x)\left\langle \BigWedge_n d^S_x g, \tau(x) \right\rangle\,d\cH^n(x) .
\end{equation}
Here are some details. By the weak$\ast$ differentiability of Lipschitz maps, the set $S$ has an $n$-dimensional approximate tangent space $\operatorname{Tan}^{(n)}(S,x)$ at $\cH^n$-almost every $x \in S$. An orientation is a simple $n$-vector filed $\tau = \tau_1 \wedge \cdots \wedge \tau_n$ on $S$, where $\tau_1,\dots,\tau_n : S \to Y$ are Borel maps such that for $\cH^n$-almost every $x \in S$ and all $j$:
\begin{enumerate}
	\item $\tau_j(x) \in \operatorname{Tan}^{(n)}(S,x)$.
	\item $|\tau_j(x)| \leq C_n$ for some $C_n \geq 1$ depending only on $n$.
	\item $\mu^{\rm bh}(\tau) = 1$.
\end{enumerate}
These measurable vector fields can be constructed as follows. Assume that $\varphi_i : K_i \to \varphi_i(K_i)$ are bi-Lipschitz parametrizations of $S$ as in Definition~\ref{def:rectifiable}, with bi-Lipschitz constant $\lambda \leq 2$. Let $L_{i,x} : \R^n \to \operatorname{Tan}^{(n)}(S,\varphi_i(x))$ be the weak$\ast$ derivative of $\varphi_i$ at $x \in K_i$, if it exists. Let $e_1,\dots,e_n$ of $\R^n$ be an oriented orthonormal basis with respect to the inscribed Riemannian inner product associated with $(\R^n,\|\cdot\|_i)$. Then, appropriate measurable vector fields $\tau_1,\dots,\tau_n$ on $\varphi_i(K_i)$ can be defined by
\[
\tau_j(\varphi_i(x)) \defl \mu^{\rm bh}(L_{i,x}(e_1) \wedge \cdots \wedge L_{i,x}(e_n))^{-n} L_{i,x}(e_j).
\]

The linear maps $d_x^Sg : \operatorname{Tan}^{(n)}(S,x) \to \R^n$ are characterized by the property that $\operatorname{wd}(g \circ f)_y = d^S_{f(y)}g \circ \operatorname{wd}f_y$ holds for almost every $y \in f^{-1}(S)$, whenever $f : \R^n \to Y$ is a Lipschitz map.

As a consequence of \cite[Theorem~9.5]{AK}, we have
\[
\bM(T) = \int_S \theta(x) \lambda(x) \, d\cH^n(x) ,
\]
where $\lambda(x) = \lambda_{\operatorname{Tan}^{(n)}(S,x)}$, and $\lambda_V$ for any $n$-dimensional subspace $V \subset Y$ is defined by
\[
\lambda_V \defl \sup_P\frac{2^n}{\cH^n(P)} ,
\]
with the supremum taken over all parallelepipeds $P$ that contain the unit ball of $V$. Equivalently,
\[
\lambda_V = \frac{\mu_V^{\rm m\ast}(B)}{\mu_V^{\rm bh}(B)}
\]
for any Borel set $B \subset V$ of finite and positive measure. $\lambda_V$ is the factor on normed spaces used to transform from the Haar measure $\mu_V^{\rm bh}$ to the Haar measure $\mu_V^{\rm m\ast}$. The reason for basing this on the Busemann-Hausdorff definition of Finsler volume is that it is induced by the $n$-dimensional Hausdorff measure of the ambient space. A corresponding density can be computed for any Finsler volume, and together with Lemma~\ref{lem:masslem}, this yields the following characterization of the Finsler mass.

\begin{lem}
	\label{lem:masscor}
Assume $Y = X^\ast$ for a separable Banach space $X$, and let $T \in \cR_n(Y)$ be represented by $\curr{S,\theta,\tau}$. Then for every Finsler volume $\mu$, the $\mu$-mass of $T$ is given by
\begin{equation*}
\bM_\mu(T) = \int_S \theta(x) \lambda^\mu_{\operatorname{Tan}^{(n)}(S,x)} \, d\cH^n(x),
\end{equation*}
where $\lambda_V^\mu$ is defined by
\[
\lambda^\mu_V \defl \frac{\mu_V(B)}{\mu_V^{\rm bh}(B)}
\]
for any Borel set $B \subset V$ of finite and positive measure.
\end{lem}

\subsection{Injective hull of spheres}
\label{subsec:inj_hull}

Although we will only work with the injective hull of the Riemannian circle $\bS^1$, the main result of this subsection remains valid for the Riemannian sphere $\bS^n$ of arbitrary dimension $n$. This is the standard Euclidean unit sphere  $\bS^n = \{x \in \R^{n+1} : |x| = 1\}$ endowed with the intrinsic geodesic distance $d$. As a subset of the Banach space $\ell_\infty(\bS^n)$ of bounded functions $\bS^n \to \R$, the injective hull $E(\bS^n)$ can be identified with the set of $1$-Lipschitz functions $f : \bS^n \to \R$, denoted $\Lip_1(\bS^n)$, satisfying the following conditions:
\begin{enumerate}
	\item $d(x,y) \leq f(x) + f(y)$ for all $x,y \in \bS^n$.
	\item For all $x \in \bS^n$ there exists $y \in \bS^n$ with $f(x) + f(y) = d(x,y)$.
\end{enumerate}
Injective hulls were introduced independently by Isbell \cite{I} and Dress \cite{D}. For further details and consequences of the definition, see for instance \cite[\S3]{L2}. It is shown there that $E(\bS^n)$, as defined above, is indeed an injective metric space. That is, for every $1$-Lipschitz map $\varphi : A \to E(\bS^n)$ defined on a subset $A$ of a metric space $X$, there exists a $1$-Lipschitz extension $\bar \varphi: X \to E(\bS^n)$. The map $\iota : \bS^n \to E(\bS^n)$, defined by $\iota(x) \defl d_x$ with $d_x(y) \defl d(x,y)$ for all $y \in \bS^n$, is an isometric embedding. This is known as the Kuratowski embedding. We identify $\bS^n$ with the image of $\iota$. As in the proof of Lemma~\ref{lem:isometricembedding} below, $\iota(\bS^n)$ is the only isometric copy of $\bS^n$ in $E(\bS^n)$. With this identification, properties (1) and (2) directly imply
\begin{equation}
\label{eq:disttoboundary}
\|d_x - f\|_\infty = f(x) ,
\end{equation}
for every $x \in \bS^n$. This means that the distance of $f$ to a point in $\bS^n$ is given by the evaluations of $f$ at this point.

The key observation for the characterization of $E(\bS^n)$ below is that every point $x \in \bS^n$ lies on a geodesic connecting any point $y \in \bS^n$ to its antipodal point $-y$.

\begin{prop}
	\label{prop:injectivehull}
The following properties hold:
\begin{enumerate}
	\item For $f \in \Lip_1(\bS^n)$, one has $f \in E(\bS^n)$ if and only if $f(x) + f(-x) = \pi$ for all $x \in \bS^n$.
	\item $E(\bS^n)$ is a compact and convex subset of $\ell_\infty(\bS^n)$.
	\item Functions in $E(\bS^n)$ take values in $[0,\pi]$.
	\item $f \in E(\bS^n)\setminus \bS^n$ if and only if $f \in E(\bS^n)$ and $f$ takes values in $(0,\pi)$.
\end{enumerate}
\end{prop}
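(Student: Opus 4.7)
My plan is to prove (1) first and derive (2)--(4) as direct consequences. The single geometric input driving everything is that on the round sphere every point $y \in S^n$ lies on some minimizing geodesic between any prescribed antipodal pair $x, -x$, so $d(x,y) + d(y,-x) = d(x,-x) = \pi$ for all $x, y \in S^n$.

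For the forward direction of (1), applying the defining property (i) of $E(S^n)$ to the pair $x, -x$ gives $\pi = d(x,-x) \leq f(x) + f(-x)$. For the reverse inequality I would invoke (ii) to pick a witness $y$ with $f(x) + f(y) = d(x,y)$, and combine the $1$-Lipschitz bound $f(-x) - f(y) \leq d(-x,y)$ with the antipodal identity $d(-x,y) = \pi - d(x,y)$ to conclude $f(x) + f(-x) \leq d(x,y) + \pi - d(x,y) = \pi$. For the backward direction, given a $1$-Lipschitz $f$ with $f(x) + f(-x) = \pi$ everywhere, property (ii) is witnessed by the choice $y = -x$, and property (i) falls out of the chain $\pi - f(x) - f(y) = f(-x) - f(y) \leq d(-x,y) = \pi - d(x,y)$.

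For (3), if $f(x_0) < 0$ then $f(-x_0) = \pi - f(x_0) > \pi$ contradicts the $1$-Lipschitz inequality $f(-x_0) \leq f(x_0) + d(x_0,-x_0) = f(x_0) + \pi$; applying this at $-x$ in place of $x$ yields $f \leq \pi$. With (1) and (3) in hand, (2) is quick: convex combinations preserve both $1$-Lipschitz continuity and the antipodal identity, giving convexity, while the uniform bound $0 \leq f \leq \pi$, equicontinuity, and closedness of $E(S^n)$ under uniform limits let Arzel\`a--Ascoli deliver compactness in $C(S^n)$, which embeds isometrically into $\ell_\infty(S^n)$ since $S^n$ is compact. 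For (4), the Kuratowski embedding $d_x$ satisfies $d_x(x) = 0$ and $d_x(-x) = \pi$; conversely, if $f(x) = 0$ for some $f \in E(S^n)$, then $1$-Lipschitz gives $f(y) \leq d(x,y)$, while the inequality $d(x,y) \leq f(x) + f(y)$ (which we now own via (1)) gives $f(y) \geq d(x,y)$, forcing $f = d_x \in S^n$. The case $f(x) = \pi$ reduces to $f(-x) = 0$ by the antipodal identity.

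I do not expect any serious obstacle; once the antipodal-geodesic identity is noticed, each claim collapses to a couple of lines. The one spot that deserves care is the forward direction of (1): axiom (ii) supplies some witness $y$ but a priori $y \neq -x$, so the Lipschitz bound plus the antipodal identity is the mechanism that relays the information from the arbitrary realizer $y$ back to $-x$. The remaining items then cascade almost mechanically from (1).
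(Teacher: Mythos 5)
Your proof is correct and follows essentially the same route as the paper's: both rest on the antipodal identity $d(x,y)+d(y,-x)=\pi$ and use the realizer $y$ from axiom (ii) plus the $1$-Lipschitz bound to force $f(x)+f(-x)=\pi$, with (2)--(4) then following by convex combinations, Arzel\`a--Ascoli, and the observation that $f(x)=0$ forces $f=d_x$. Your minor variations (a direct chain of inequalities instead of the paper's contradiction in the converse of (1), and deriving $f\geq 0$ from the Lipschitz condition rather than from $d(x,x)\leq 2f(x)$) are purely cosmetic.
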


\begin{proof}
(1): Let $f \in E(\bS^n)$, by definition, for any $x \in \bS^n$, there exists $y \in X$ with $f(x) + f(y) = d(x,y)$. Using this, we obtain
\begin{align*}
d(x,-x) & \leq f(x) + f(-x) = f(x) + f(y) + f(-x) - f(y) \\
& \leq d(x,y) + d(-x,y) = d(x,-x) .
\end{align*}
Hence, equality holds throughout, and we conclude that $f(x) + f(-x) = d(x,-x) = \pi$.

On the other hand, suppose that $f \in \Lip_1(\bS^n)$ satisfies $f(x) + f(-x) = \pi$ for some $x \in \bS^n$, and assume for contradiction that there exists $y \in \bS^n$ such that $d(x,y) > f(x) + f(y)$. Since $f$ is $1$-Lipschitz, we have
\begin{align*}
d(x,-x) - d(-x,y) & = d(x,y) > f(x) + f(y) \\
& = f(x) + f(-x) + f(y) - f(-x) \\
& \geq d(x,-x) - d(y,-x) .
\end{align*}
This is not possible. Hence $f(x) + f(y) \geq d(x,y)$ for all $y \in \bS^n$. If $f(x) + f(-x) = \pi$ holds for all $x \in \bS^n$, then $f(x) + f(y) \geq d(x,y)$ for all $x,y \in \bS^n$ with equality for $y = -x$. This shows that $f \in E(\bS^n)$ and establishes (1).
	
(2): Since all the functions in $E(\bS^n)$ are $1$-Lipschitz and $\bS^n$ is compact, the Arzel\`a-Ascoli theorem implies that $E(\bS^n)$ is compact. If $f,g \in E(\bS^n)$ and $t \in [0,1]$, then $tf + (1-t)g$ is $1$-Lipschitz and moreover
\[
(tf(x) + (1-t)g(x)) + (tf(-x) + (1-t)g(-x)) = t\pi + (1-t)\pi = \pi ,
\]
for all $x \in \bS^n$. Hence $tf + (1-t)g \in E(\bS^n)$ by (1). This proves (2).
	
(3): Observe that $f(x) + f(-x) = \pi$ by (1) and since $0 \leq \frac{1}{2}d(x,x) \leq f(x)$ by the definition of $E(\bS^n)$, it follows that $f(x) \in [0,\pi]$ for all $x \in \bS^n$.
	
(4): If $f \in \bS^n \subset E(\bS^n)$, then $f = d_x$ for some $x$ and hence $f(x) = d(x,x) = 0$. On the other hand, if $f(x) = 0$ for $f \in E(\bS^n)$ and $x \in \bS^n$, then $\|f - d_x\|_{\infty} = f(x) = 0$ by \eqref{eq:disttoboundary}. Hence $f = d_x \in \bS^n$. Similarly, if $f(x) = \pi$, then $f(-x) = 0$ by (1) and therefore $f = d_{-x}$ by the same argument.
\end{proof}

For our main applications, we fix an orientation of $\bS^1$ and a base point $p_0 \in \bS^1$. Let $\gamma : \R \to \bS^1$ be the $2\pi$-periodic covering map with $\gamma(0) = p_0$ that preserves both length and orientation. Any function $f \in E(\bS^1)$ then admits a unique lift $\bar f : \R \to \R$ such that $f \circ \gamma = \bar f$. Working with these lifts, it follows from Proposition~\ref{prop:injectivehull} that $E(\bS^1)$ can be identified isometrically with the space of functions $f : \R \to \R$ satisfying:
\begin{enumerate}
	\item $f$ is $1$-Lipschitz.
	\item $f_{\alpha + \pi} + f_\alpha = \pi$ for all $\alpha \in \R$.
\end{enumerate}
As a consequence of (2), any such function is $2\pi$-periodic. We henceforth fix the identification of $E(\bS^1)$ with its lifted representatives on $\R$ via $\gamma$. Although the differential form $\tilde \omega$ introduced earlier may a priori depend on the choice of a point (and certainly on the orientation), we will see in \eqref{eq:indepbasepoint} that it is in fact independent of the base point. In this notation, points in $\bS^1$ are identified with functions of the form $\alpha \mapsto \arccos(\cos(\alpha - \tau))$ for some parameter $\tau \in \R$, these are piecewise linear "zigzag" functions.

Since the coefficients of $\tilde \omega$ are not bounded in a neighborhood of $\bS^1$, we will also make use of the truncated injective hulls
\begin{equation}
\label{eq:truncated}
E_\varepsilon(\bS^1) \defl \{f \in E(\bS^1) : f_\alpha \in [\varepsilon, \pi-\varepsilon] \text{ for all } \alpha\}
\end{equation}
for $\varepsilon \in (0,\frac{\pi}{2})$. The following observations are easy to check and left to the reader.

\begin{lem}
	\label{lem:truncated}
$ $
\begin{enumerate}
	\item $E_\varepsilon(\bS^1) = \{f \in E(\bS^1) : \dist(f,\bS^1) \geq \varepsilon\}$.
	\item $E(\bS^1)\setminus \bS^1 = \bigcup_{n \in \N} E_{\frac{1}{n}}(\bS^1)$.
	\item $E_\varepsilon(\bS^1)$ is a compact and convex subset of $\ell^\infty(\R)$.
	\item $E_\varepsilon(\bS^1)$ is a $1$-Lipschitz retract of $E(\bS^1)$.
\end{enumerate}
\end{lem}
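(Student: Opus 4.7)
The four claims are routine and follow from Proposition~\ref{prop:injectivehull} together with the identity $\|d_x - f\|_\infty = f(x)$ from \eqref{eq:disttoboundary}. I would handle them in turn.

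For (1), the plan is to rewrite the distance as $\dist(f,S^1) = \inf_{\tau} \|d_\tau - f\|_\infty = \inf_\alpha f_\alpha$ using \eqref{eq:disttoboundary} and the identification of $S^1$ with the functions $d_\tau$. The antipodal identity $f_{\alpha+\pi} = \pi - f_\alpha$ from Proposition~\ref{prop:injectivehull}(1) renders the two pointwise conditions $f_\alpha \geq \varepsilon$ for all $\alpha$ and $f_\alpha \leq \pi-\varepsilon$ for all $\alpha$ equivalent, so $f \in E_\varepsilon(S^1)$ iff $\inf_\alpha f_\alpha \geq \varepsilon$ iff $\dist(f,S^1) \geq \varepsilon$. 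Then (2) is immediate from (1) combined with Proposition~\ref{prop:injectivehull}(4): $f \in E(S^1)\setminus S^1$ iff $f$ takes values in $(0,\pi)$, iff $\dist(f,S^1) > 0$, iff $f \in E_{1/n}(S^1)$ for some $n \in \N$.

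For (3), compactness is inherited since $E_\varepsilon(S^1)$ is the intersection of the compact set $E(S^1)$ with the closed slab $\{f : \varepsilon \leq f_\alpha \leq \pi-\varepsilon \text{ for all } \alpha\}$, and convexity follows from Proposition~\ref{prop:injectivehull}(2) together with the trivial convexity of the slab.

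For (4), I would exhibit the pointwise truncation $r(f)_\alpha \defl \max(\varepsilon,\min(\pi-\varepsilon, f_\alpha))$ as an explicit $1$-Lipschitz retraction. Three things need checking: (a) $r(f) \in E(S^1)$, (b) $r|_{E_\varepsilon(S^1)} = \id$, and (c) $r$ is $1$-Lipschitz as a map $E(S^1) \to E_\varepsilon(S^1)$. Part (b) is immediate from the definition, and (c) follows because pointwise truncation is a $1$-Lipschitz operation on $\R$, so taking the supremum of pointwise differences preserves nonexpansiveness. For (a), the $1$-Lipschitz property of $r(f)$ is inherited from $f$ for the same reason, and the antipodal identity $r(f)_\alpha + r(f)_{\alpha+\pi} = \pi$ is a short case analysis: if $f_\alpha \in [\varepsilon, \pi-\varepsilon]$ then so is $f_{\alpha+\pi} = \pi - f_\alpha$ and both values are unaltered; if $f_\alpha < \varepsilon$ then $f_{\alpha+\pi} > \pi-\varepsilon$, so the truncated values are exactly $\varepsilon$ and $\pi-\varepsilon$ summing to $\pi$; and the case $f_\alpha > \pi-\varepsilon$ is symmetric. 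There is no real obstacle here; the only slightly delicate point is the antipodal compatibility check in (4), which the case analysis resolves immediately.
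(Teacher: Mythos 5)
Your proof is correct; the paper itself gives no argument for this lemma (it is explicitly ``left to the reader''), and your verifications via \eqref{eq:disttoboundary}, the antipodal identity $f_{\alpha+\pi}=\pi-f_\alpha$, and the pointwise truncation $r(f)_\alpha=\max(\varepsilon,\min(\pi-\varepsilon,f_\alpha))$ are exactly the intended ones. The only point you gloss over, harmlessly, is that in (2) the passage from ``$f$ takes values in $(0,\pi)$'' to ``$\dist(f,S^1)>0$'' uses that $f$ is continuous and $2\pi$-periodic, so its infimum is attained.
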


\subsection{Representation of the hemisphere}
\label{sec:hemisphere}

The hemisphere
\[
\bS^2_+ \defl \{(x,y,z) \in \R^3 : x^2 + y^2 + z^2 = 1, \, z \geq 0 \}
\]
is equipped with the induced intrinsic metric denoted by $d$. For $p \in \bS^2_+$ let $x_p \in \bS^1$ be a point with intrinsic distance $d(x_p,p) = \dist(\bS^1,p)$. This point is unique unless $p$ is the north pole $N \defl (0,0,1)$. For any $x \in \bS^1$, the spherical Pythagorean theorem states
\[
\cos(d(p,x)) = \cos(d(p,x_p))\cos(d(x_p,x)) .
\]
Thus, $p$ can be identified with the function $f_p : \bS^1 \to \R$ defined by
\[
f_p(x) \defl d(p,x) = \arccos(\cos(d(p,x_p))\cos(d(x_p,x))) .
\]
If $p = N$, we have
\[
f_N(x) = \tfrac{\pi}{2} = \arccos(0) = \arccos(\cos(\tfrac{\pi}{2})\cos(d(x_p,x)))
\]
for all $x \in \bS^1$.

\begin{lem}
	\label{lem:isometricembedding}
The map $\iota : p \mapsto f_p$ is an isometric embedding of $\bS^2_+$ into $E(\bS^1)$. Moreover, $\iota(\bS^2_+)$ is the only isometric copy of $\bS^2_+$ in $E(\bS^1)$.
\end{lem}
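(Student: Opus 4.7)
The plan has two parts: first, to verify that $\iota : p \mapsto f_p$ is a well-defined isometric embedding via Proposition~\ref{prop:injectivehull}, and second, to establish uniqueness by a bootstrapping argument that begins with the analogous statement for $S^1$ in $E(S^1)$ and uses the identity \eqref{eq:disttoboundary}.

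For the embedding, I apply Proposition~\ref{prop:injectivehull}(1) to show $f_p \in E(S^1)$. The function $f_p = d(p, \cdot)$ is $1$-Lipschitz because the intrinsic metric on $S^1$ coincides with the restriction of $d_{S^2_+}$ to the equator. The antipodal condition $f_p(x) + f_p(-x) = \pi$ is automatic: since $x$ and $-x$ are antipodal in $\R^3$, one has $d(p,x) + d(p,-x) = \pi$ for every $p \in S^2$. Distance preservation then reduces to producing $x_0 \in S^1$ with $|f_p(x_0) - f_q(x_0)| = d(p, q)$ (the inequality $\leq$ is the triangle inequality). Let $\Gamma$ be a great circle of $S^2$ through $p$ and $q$; since $p, q$ lie in $\{z \geq 0\}$, they belong to the closed upper arc $\Gamma \cap \{z \geq 0\}$, which has length $\pi$, lies in $S^2_+$, and is bounded by the two antipodal intersections $x_0, -x_0$ of $\Gamma$ with $S^1$. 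Writing $a = d(x_0, p)$ and $b = d(p, q)$, the fact that this arc is a minimizing geodesic in $S^2_+$ yields $d(x_0, q) = a + b$, whence $|f_p(x_0) - f_q(x_0)| = b = d(p, q)$.

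For uniqueness, I first establish the $S^1$-version: if $f, g \in E(S^1)$ satisfy $\|f - g\|_\infty = \pi$, then $|f(y) - g(y)| = \pi$ at some $y$, and since $f, g$ take values in $[0, \pi]$ by Proposition~\ref{prop:injectivehull}(3), necessarily $\{f(y), g(y)\} = \{0, \pi\}$, so $f, g \in \iota(S^1)$ by Proposition~\ref{prop:injectivehull}(4). Any isometric copy $B$ of $S^1$ in $E(S^1)$ pairs each point with an antipode at distance $\pi$, hence $B \subset \iota(S^1)$, and the rigidity of $S^1$ as a subspace of itself forces $B = \iota(S^1)$. For $S^2_+$, let $\psi : S^2_+ \to E(S^1)$ be an isometric embedding; then $\psi(S^1)$ is an isometric copy of $S^1$ in $E(S^1)$, hence equals $\iota(S^1)$, and $\phi \defl \psi|_{S^1}$ is an isometry of $\iota(S^1) \cong S^1$. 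Every such isometry extends to an isometry $\tilde\phi$ of $S^2_+$ via rotations about the $z$-axis and reflections through planes containing it. Replacing $\psi$ by $\psi \circ \tilde\phi^{-1}$ leaves the image unchanged and produces an embedding fixing $S^1$ pointwise, so \eqref{eq:disttoboundary} yields
\[
\psi(p)(x) = \|\psi(p) - d_x\|_\infty = d(\psi(p), \psi(x)) = d(p, x) = f_p(x)
\]
for every $p \in S^2_+$ and $x \in S^1$, identifying $\psi$ with $\iota$. The main care lies in the great-circle step of the embedding argument: ensuring both that the chosen arc of $\Gamma$ stays in $S^2_+$ and that its arc length equals the intrinsic $S^2_+$ distance, which is immediate since the upper arc of a great circle through two points of $S^2_+$ is entirely contained in the closed hemisphere.
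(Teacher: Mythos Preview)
Your proof is correct and follows essentially the same approach as the paper: both verify $f_p \in E(S^1)$ via Proposition~\ref{prop:injectivehull}(1), realize $\|f_p - f_q\|_\infty = d(p,q)$ by extending the geodesic through $p,q$ to the boundary circle, and for uniqueness first pin down the boundary via the observation that pairs at distance~$\pi$ in $E(S^1)$ must lie in $\iota(S^1)$, then invoke \eqref{eq:disttoboundary}. Your uniqueness argument is in fact a bit more explicit than the paper's: where the paper asserts $X \subset \iota(S^2_+)$ directly from the two ``determined by boundary distances'' facts, you spell out the needed step of extending the boundary isometry $\phi$ of $S^1$ to an isometry $\tilde\phi$ of $S^2_+$ and precomposing. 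Two minor presentational points: your formula $d(x_0,q) = a+b$ tacitly assumes $p$ lies between $x_0$ and $q$ on the upper arc (swap $x_0 \leftrightarrow -x_0$ if not), and the case $p,q \in S^1$ nonantipodal forces $\Gamma = S^1$, where your arc description does not apply---but then $x_0 = p$ works trivially.
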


\begin{proof}
As a distance function, it is clear that $f_p$ is $1$-Lipschitz, since for all $x,y \in \bS^1$ we have
\[
|f_p(x) - f_p(y)| = |d(p,x) - d(p,y)| \leq d(x,y).
\]
Moreover, for all $x \in \bS^1$,
\[
f_p(x) + f_p(-x) = d(p,x) + d(p,-x) = \pi,
\]
because $p \in \bS^2_+$ lies on a minimizing geodesic connecting $x$ with $-x$. Thus $f_p$ is in $E(\bS^1)$ by Proposition~\ref{prop:injectivehull}.
	
It remains to show that the intrinsic distance $d(p,q)$ is given by
\begin{equation}
\label{eq:halfsphereinjective}
d(p,q) = \|f_p - f_q\|_\infty
\end{equation}
for all distinct points $p,q \in \bS^2_+\setminus \bS^1$. On the one hand, for every $x \in \bS^1$, we have
\[
d(p,q) \geq |d(p,x) - d(x,q)| = |f_p(x) - f_q(x)|.
\]
On the other hand, the unique geodesic from $p$ to $q$ in $\bS^2_+$ can be extended in $\bS^2_+$ until it meets the boundary $\bS^1$ at some point $x$. Since this extended geodesic is minimizing, it follows that
\[
f_p(x) - f_q(x) = d(p,x) - d(q,x) = d(p,q) .
\]
This establishes \eqref{eq:halfsphereinjective}.

For the second statement, assume that $X$ is an isometric copy of $\bS^2_+$ inside $E(\bS^1)$. Its (surface) boundary $\partial X$ is isometric to $\bS^1$, and we claim that $\partial X$ coincides with the natural isometric copy $S \defl \{d_x : x \in \bS^1\}$ of $\bS^1$ in $E(\bS^1)$. Indeed, any point $f \in \partial X$ has a corresponding point $g \in \partial X$ with $\|f - g\|_\infty = \pi$. However, as a consequence of Proposition~\ref{prop:injectivehull}, the only pairs of points in $E(\bS^1)$ that have distance $\pi$ are antipodal pairs $d_x, d_{-x}$ in $S$. Therefore $\partial X$ is contained in $S$. Since both $\partial X$ and $S$ are topological circles, it follows that $S = \partial X$. Any $f \in E(\bS^1)$ is uniquely determined by the distance functions $f(x) = \|f - d_x\|_\infty$ for $x \in \bS^1$, as follows from \eqref{eq:disttoboundary}. Similarly, any $f \in X$, being a point in an isometric copy of $\bS^2_+$, is uniquely determined by the distance functions $\|f-g\|_\infty$ to points $g$ of the boundary $\partial X$. From $S = \partial X$, it follows that $X \subset \iota(\bS^2_+)$. Because $X$ cannot be isometric to a proper subset of $\iota(\bS^2_+)$, we conclude that $X = \iota(\bS^2_+)$.
\end{proof}

This lemma establishes the existence of a unique subset of $E(\bS^1)$ that is isometric to $\bS^2_+$. Moreover, the proof shows that its boundary $\{d_x : x \in \bS^1\}$ is the only isometric copy of $\bS^1$ in $E(\bS^1)$. Consequently, both metric spaces $\bS^1$ and $\bS^2_+$ will be identified with these corresponding subsets of $E(\bS^1)$.

Any point $p \in \bS^2_+$ is represented by its lift $f : \R \to \R$,
\[
f_\alpha = \arccos(\cos(d)\cos(\alpha - \tau))
\]
for some parameters $\tau \in \R$ and $d \in [0,\frac{\pi}{2}]$. In fact, $(\frac{\pi}{2}-d,\tau) \mapsto f$ are polar normal coordinates centered at the north pole $N = \frac{\pi}{2}$.

For later use, we analyze the variations in $\tau$ and $d$. Consider the function
\[
\Gamma_\alpha(\tau,d) \defl \arccos(\cos(d)\cos(\alpha - \tau))
\]
for $\alpha,\tau \in \R$ and $d \in [0,\frac{\pi}{2}]$. Since $\arccos'(x) = \frac{-1}{\sqrt{1 - x^2}}$ for $x \in (-1,1)$, we obtain
\begin{align*}
\tfrac{\partial}{\partial \tau} \Gamma_\alpha(\tau,d) & = \frac{-\cos(d)\sin(\alpha - \tau)}{(1 - \cos(d)^2\cos(\alpha - \tau)^2)^\frac{1}{2}} , \\
\tfrac{\partial}{\partial d} \Gamma_\alpha(\tau,d) & = \frac{\sin(d)\cos(\alpha - \tau)}{(1 - \cos(d)^2\cos(\alpha - \tau)^2)^\frac{1}{2}} ,
\end{align*}
for $\alpha,\tau \in \R$ and $d \in (0,\frac{\pi}{2}]$. Thus
\begin{align*}
\left(\tfrac{\partial}{\partial \tau} \Gamma_\alpha(\tau,d)\right)^2 & + \cos(d)^2\left(\tfrac{\partial}{\partial d} \Gamma_\alpha(\tau,d)\right)^2 \\
& = \frac{\cos(d)^2\sin(\alpha - \tau)^2}{1 - \cos(d)^2\cos(\alpha - \tau)^2} + \frac{\cos(d)^2\sin(d)^2\cos(\alpha - \tau)^2}{1 - \cos(d)^2\cos(\alpha - \tau)^2} \\
& = \cos(d)^2\frac{(1 - \cos(\alpha - \tau)^2) + (1-\cos(d)^2)\cos(\alpha - \tau)^2}{1 - \cos(d)^2\cos(\alpha - \tau)^2} \\
& =  \cos(d)^2 .
\end{align*}
Hence, $\alpha \mapsto \nabla\Gamma_\alpha(d,\tau)$ traces an ellipse. Respectively, the path
\begin{align*}
	\alpha \mapsto \gamma_{\tau,d}(\alpha) & \defl \left(\frac{-\sin(\alpha - \tau)}{(1 - \cos(d)^2\cos(\alpha - \tau)^2)^\frac{1}{2}}, \frac{\sin(d)\cos(\alpha - \tau)}{(1 - \cos(d)^2\cos(\alpha - \tau)^2)^\frac{1}{2}}\right) \\
	& = \left(\frac{\tfrac{\partial}{\partial \tau} \Gamma_\alpha(\tau,d)}{\cos(d)}, \tfrac{\partial}{\partial d} \Gamma_\alpha(\tau,d)\right) \\
\end{align*}
lies on the unit circle. Relevant properties of $\gamma_{\tau,d}$ are collected in the next lemma.

\begin{lem}
	\label{lem:tangentsphere}
For fixed $d \in (0,\frac{\pi}{2}]$ and $\tau \in \R$, the tangent plane of $\bS^2_+ \subset L^\infty(\R)$ at $f = \arccos(\cos(d)\cos(\cdot - \tau))$ is spanned by the coordinate functions of the plane path $\gamma = \gamma_{\tau,d} : \R \to \R^2$, which gives a counterclockwise parametrization of the unit circle. Further,
\[
\gamma(\alpha) \times \gamma(\beta) = \frac{\sin(d)\sin(\beta-\alpha)}{(1 - \cos(d)^2\cos(\alpha - \tau)^2)^\frac{1}{2}(1 - \cos(d)^2\cos(\beta - \tau)^2)^\frac{1}{2}} ,
\]
where $v \times w \defl v_1w_2 - v_2w_1$ and
\[
|\gamma'(\alpha)| = \frac{\sin(d)}{1 - \cos(d)^2\cos(\alpha - \tau)^2} = \frac{\sin(\dist(f,\bS^1))}{\sin(f_\alpha)^2}
\]
with integral 
\begin{align*}
	2\pi & = \int_{0}^{2\pi} \frac{\sin(d)}{1 - \cos(d)^2\cos(\alpha - \tau)^2} \,d\alpha .
\end{align*}
\end{lem}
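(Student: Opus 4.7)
The strategy is to verify the four claims by direct trigonometric calculation, bootstrapping from the pointwise identity $\left(\tfrac{\partial}{\partial \tau}\Gamma_\alpha/\cos(d)\right)^2 + \left(\tfrac{\partial}{\partial d}\Gamma_\alpha\right)^2 = 1$ already established in the display preceding the lemma. This identity says precisely that $\gamma(\alpha) \in S^1 \subset \R^2$ for every $\alpha$, and hence $\gamma$ parametrizes the unit circle. To pin down that the parametrization is counterclockwise, I will evaluate at $u \defl \alpha - \tau = 0$ to get $\gamma(\tau) = (0,1)$ and read off $\gamma'_1(\tau) = -1/\sin(d) < 0$ directly from the explicit formula; this fixes the orientation.

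The cross product formula is a straightforward expansion. Writing $N_\alpha \defl (1 - \cos(d)^2\cos(\alpha-\tau)^2)^{1/2}$ and multiplying out $\gamma_1(\alpha)\gamma_2(\beta) - \gamma_2(\alpha)\gamma_1(\beta)$, the common factor $\sin(d)/(N_\alpha N_\beta)$ factors out and the remaining numerator collapses by the angle subtraction identity to $\sin(\beta-\alpha)$. For $|\gamma'(\alpha)|$ I will exploit the first two claims rather than differentiate coordinate-wise: since $\gamma$ is a counterclockwise parametrization of the unit circle, writing $\gamma = (\cos\theta,\sin\theta)$ for an increasing angle function $\theta$ gives $\gamma(\alpha)\times\gamma(\beta) = \sin(\theta(\beta)-\theta(\alpha))$ and $|\gamma'(\alpha)| = \theta'(\alpha)$. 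Dividing the cross product formula by $\beta - \alpha$ and letting $\beta \to \alpha$ then recovers $|\gamma'(\alpha)| = \sin(d)/(1-\cos(d)^2\cos(\alpha-\tau)^2)$. The alternative expression $\sin(\dist(f,S^1))/\sin(f_\alpha)^2$ follows from $\cos(f_\alpha) = \cos(d)\cos(\alpha-\tau)$, which yields $\sin(f_\alpha)^2 = N_\alpha^2$, combined with $\dist(f,S^1) = d$ from the parametrization of $S^2_+$ worked out in subsection~\ref{sec:hemisphere}.

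For the integral I will use the $2\pi$-periodicity in $\tau$ to reduce to $\tau = 0$, rewrite the denominator as $\sin^2\alpha + \sin(d)^2\cos^2\alpha$, and apply the substitution $t = \tan\alpha$ on each of the open branches where $\tan$ is smooth; this yields the antiderivative $\sin(d)^{-1}\arctan(\tan\alpha/\sin(d))$, and the two branches $(-\tfrac{\pi}{2},\tfrac{\pi}{2})$ and $(\tfrac{\pi}{2},\tfrac{3\pi}{2})$ each contribute $\pi/\sin(d)$, so the integrand $\sin(d)/(1-\cos(d)^2\cos^2\alpha)$ integrates to $2\pi$. No conceptual obstacle arises; the only care needed is keeping signs straight in the orientation step, since a sign error there would flip the sign of the cross product formula and propagate through the computation of $|\gamma'(\alpha)|$.
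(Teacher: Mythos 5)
Your computations are all correct, and for three of the four claims you follow essentially the paper's route: the pointwise identity before the lemma gives $|\gamma(\alpha)|=1$, the cross product collapses by the angle subtraction formula exactly as in the paper, and your speed computation (dividing the cross product formula by $\beta-\alpha$ and letting $\beta\to\alpha$) is the same limit argument the paper uses via $|\gamma'(\alpha)|=\gamma(\alpha)\times\gamma'(\alpha)=\lim_{\varepsilon\to0}\gamma(\alpha)\times\gamma(\alpha+\varepsilon)/\varepsilon$. Two points of divergence are worth noting. First, for the orientation the paper deduces counterclockwise motion globally from $\gamma(\alpha)\times\gamma(\beta)>0$ for $\beta\in(\alpha,\alpha+\pi)$, while you check the sign of $\gamma_1'(\tau)$ at the single point $\gamma(\tau)=(0,1)$; a one-point check only fixes the orientation locally, and your later step presupposes a globally increasing angle function when writing $\gamma=(\cos\theta,\sin\theta)$. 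The gap is harmless because your own limit computation yields $\theta'(\alpha)=\gamma(\alpha)\times\gamma'(\alpha)=\sin(d)\,(1-\cos(d)^2\cos(\alpha-\tau)^2)^{-1}>0$ for every $\alpha$, which is what actually proves the parametrization never stalls or reverses; you should state the argument in that order (lift to a continuous angle, compute $\theta'>0$, conclude counterclockwise) rather than the reverse. Second, for the integral the paper gets $\int_0^{2\pi}|\gamma'(\alpha)|\,d\alpha=2\pi$ geometrically, as the length of the unit circle traversed exactly once by the monotone $2\pi$-periodic path, whereas you integrate directly using the antiderivative $\sin(d)^{-1}\arctan(\tan\alpha/\sin(d))$ on the two branches; your calculus is right (and covers $d=\tfrac{\pi}{2}$, where the integrand is constant), and it has the mild advantage of not relying on the winding-number-one observation that the paper leaves implicit, at the cost of an explicit substitution the paper avoids.
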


\begin{proof}
For $\alpha,\beta \in \R$,
\begin{align*}
\gamma(\alpha) \times \gamma(\beta) & = \frac{\sin(d)\cos(\alpha - \tau)\sin(\beta - \tau) - \sin(d)\cos(\beta - \tau)\sin(\alpha - \tau)}{(1 - \cos(d)^2\cos(\alpha - \tau)^2)^\frac{1}{2}(1 - \cos(d)^2\cos(\beta - \tau)^2)^\frac{1}{2}} \\
& = \frac{\sin(d)\sin(\beta-\alpha)}{(1 - \cos(d)^2\cos(\alpha - \tau)^2)^\frac{1}{2}(1 - \cos(d)^2\cos(\beta - \tau)^2)^\frac{1}{2}} .
\end{align*}
Here $v \times w = v_1w_2 - v_2w_1$ is the signed area spanned by the parallelogram of two vectors $v,w \in \R^2$.

Thus, $\gamma(\alpha) \times \gamma(\beta) > 0$ whenever $\alpha < \beta < \alpha + \pi$. Since $|\gamma(\alpha)| = 1$ for all $\alpha$, it follows that $\gamma$ is a smooth, counterclockwise parametrization of $\bS^1$. Its speed is given by
\begin{align*}
|\gamma'(\alpha)| & = \gamma(\alpha) \times \gamma'(\alpha) = \lim_{\varepsilon \to 0} \frac{\gamma(\alpha) \times \gamma(\alpha + \varepsilon)}{\varepsilon} = \frac{\sin(d)}{1 - \cos(d)^2\cos(\alpha - \tau)^2} \\
 & = \frac{\sin(\dist(f,\bS^1))}{\sin(f_\alpha)^2} ,
\end{align*}
and the length of $\gamma|_{[0,2\pi]}$ is given by
\begin{align*}
2\pi & = \int_{0}^{2\pi} |\gamma'(\alpha)| \,d\alpha = \int_0^{2\pi} \frac{\sin(d)}{1 - \cos(d)^2\cos(\alpha - \tau)^2} \, d\alpha
\end{align*}
as claimed.
\end{proof}

The integral identity above is the primary motivation for the definition of the differential form $\tilde \omega$.

\section{Definition of omega}
\label{sec:definition}

For $f \in E(\bS^1) \setminus \bS^1$ and $\alpha,\beta \in \R$ with $\alpha \neq \beta \text{ \rm mod } \pi$ (i.e., $\sin(\beta-\alpha) \neq 0$), coefficients are defined by
\begin{equation*}
p_{\alpha,\beta}(f) \defl \frac{1 - \cos(\beta-\alpha)^2 - \cos(f_\alpha)^2 - \cos(f_\beta)^2 + 2 \cos(\beta - \alpha)\cos(f_\alpha)\cos(f_\beta)}{\sin(\beta-\alpha)^2\sin(f_\alpha)^2\sin(f_\beta)^2} .
\end{equation*}
First, note that since $f \notin \bS^1$, it follows from Proposition~\ref{prop:injectivehull} that $f$ takes values in $(0,\pi)$. In particular, $p_{\alpha,\beta}(f)$ is well-defined. The differential two-form $\omega \in \Omega^2(E(\bS^1) \setminus \bS^1)$ is defined by
\begin{equation}
\label{eq:diffform}
\omega_f \defl \int_0^\pi \int_\alpha^\pi p_{\alpha,\beta}(f)\, d\pi_\alpha \wedge d\pi_\beta \, d\beta \, d\alpha .
\end{equation}
The precise interpretation of $\omega_f$ will be given in Subsection~\ref{sec:actioncurrents}, where the action of $\omega$ on currents is introduced. Note that $\omega$ differs from $\tilde \omega$ defined in the introduction by a factor of $\pi$. For convenience of notation, we will work with $\omega$ until the proof of Theorem~\ref{thm:mainthm1}.

\subsection{Geometric interpretation}
\label{subsec:geometric}

The coefficients $p_{\alpha,\beta}(f)$ have geometric meaning. Fix $f \in E(\bS^1)$ and $\alpha,\beta \in \R$ that represent points in $\bS^1$ (also denoted by $\alpha$ and $\beta$) such that $\alpha \neq \beta \text{ \rm mod } \pi$ ($d(\alpha,\beta)$ is neither $0$ nor $\pi$). The three values $f_\alpha$, $f_\beta$ and $d(\alpha,\beta)$ lie in the interval $[0,\pi]$ and satisfy the triangle inequality by the defining properties of $E(\bS^1)$. Thus, there exists a unique point $p \in \bS^2_+ \subset \R^3$ with spherical distances $d(p,\alpha) = f_\alpha$ and $d(p,\beta) = f_\beta$. Let $A$, $B$, and $C$ denote the angles of the spherical triangle with vertices $\alpha$, $\beta$, and $p$, respectively. Denote by $h_{\alpha,\beta}(f) \geq 0$ the height of $p$ above the horizontal plane $\R^2 \times {0} \subset \R^3$. Then the spherical law of sines yields
\[
\frac{\sin(C)^2}{\sin(\beta-\alpha)^2} = \frac{\sin(A)}{\sin(f_\beta)} \frac{\sin(B)}{\sin(f_\alpha)} ,
\]
as well as $\sin(A) = \sin(d)/\sin(f_\alpha)$ and $\sin(B) = \sin(d)/\sin(f_\beta)$, where $d \in [0,\frac{\pi}{2}]$ is the intrinsic distance in $\bS^2_+$ from $p$ to $\bS^1$. Note that $\sin(d) = h_{\alpha,\beta}(f)$, and therefore
\begin{equation}
\label{eq:calibheight1}
\frac{\sin(C)^2}{\sin(\beta-\alpha)^2} = \frac{h_{\alpha,\beta}(f)^2}{\sin(f_\alpha)^2\sin(f_\beta)^2}.
\end{equation}
Next, we derive a formula for the height function $h_{\alpha,\beta}(f)$. The point $p \in \bS^2_+$ is represented by the function $g \in E(\bS^1)$ with $g_\alpha = f_\alpha$, $g_\beta = f_\beta$, and satisfies $\cos(g_x) = \cos(d)\cos(x - \tau)$ for some $\tau \in \R$. Let $\tilde x \defl x - \tau$. By definition,
\begin{align*}
p_{\alpha,\beta}(f) & \sin(f_\alpha)^2 \sin(f_\beta)^2 \\
 & = 1 - \frac{\cos(g_\alpha)^2 + \cos(g_\beta)^2 - 2 \cos(\beta - \alpha) \cos(g_\alpha)\cos(g_\beta)}{\sin(\beta-\alpha)^2} \\
 & = 1 - \cos(d)^2 \frac{\cos(\tilde\alpha)^2 + \cos(\tilde\beta)^2 - 2 \cos(\tilde \beta - \tilde \alpha) \cos(\tilde \alpha)\cos(\tilde \beta)}{\sin(\tilde \beta - \tilde \alpha)^2} \\
 & = 1 - \cos(d)^2 = \sin(d)^2 = h_{\alpha,\beta}(f)^2.
\end{align*}
Together with \eqref{eq:calibheight1}, this provides a geometric interpretation of the coefficients $p_{\alpha,\beta}(f)$ in terms of spherical geometry.

\begin{lem}
	\label{lem:geometriccoeff}
For all $f \in E(\bS^1)$ and $\alpha,\beta \in \bS^1$ with $d(\alpha,\beta) \notin \{0,\pi\}$, we have
\[
p_{\alpha,\beta}(f) = \frac{\sin(\angle_p(\alpha,\beta))^2}{\sin(\beta-\alpha)^2} = \frac{h_{\alpha,\beta}(f)^2}{\sin(f_\alpha)^2\sin(f_\beta)^2} ,
\]
where $p \in \bS^2_+ \subset \R^3$ is the unique point satisfying $d(p,\alpha) = f_\alpha$ and $d(p,\beta) = f_\beta$. Here, $h_{\alpha,\beta}(f)$ is the height of the point $p$ above $\R^2 \times \{0\}$, and $\angle_p(\alpha,\beta)$ is the angle at $p$ of the spherical triangle induced by $\alpha$, $\beta$ and $p$.
\end{lem}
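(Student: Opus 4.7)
The plan is to establish both equalities in the lemma by assembling the two calculations already developed in the discussion preceding the statement: the application of the spherical law of sines, and the explicit formula for the height $h_{\alpha,\beta}(f)$ in terms of $f_\alpha$, $f_\beta$ and $\beta-\alpha$. The final step is then to identify the resulting expression with the defining formula for $p_{\alpha,\beta}(f)$.

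First I would verify that the point $p$ is well defined. Since $f \in E(S^1)$, Proposition~\ref{prop:injectivehull} gives $f_\alpha,f_\beta \in (0,\pi)$ (or $[0,\pi]$ in general), and the $1$-Lipschitz condition together with $f_\gamma + f_{-\gamma} = \pi$ yields the spherical triangle inequalities relating $f_\alpha$, $f_\beta$ and $d(\alpha,\beta)$. Together with the restriction $d(\alpha,\beta) \notin \{0,\pi\}$, this singles out a unique point $p \in S^2_+$ at the prescribed spherical distances from $\alpha$ and $\beta$ on the equator. Next, I view $\alpha$ and $\beta$ as unit vectors in $\R^2\times\{0\}$ meeting at angle $\beta-\alpha$, so the parallelogram they span has area $|\sin(\beta-\alpha)|$. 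Writing $p = (p',\sin(d))$ with $d = \dist(p, S^1)$, the parallelepiped spanned by the position vectors of $\alpha,\beta,p$ has volume $V = |\sin(\beta-\alpha)|\sin(d)$, i.e. $V^2 = \sin(\beta-\alpha)^2 h_{\alpha,\beta}(f)^2$. Plugging this into the spherical law of sines as recorded above the statement produces
\[
\frac{\sin(\angle_p(\alpha,\beta))^2}{\sin(\beta-\alpha)^2} = \frac{h_{\alpha,\beta}(f)^2}{\sin(f_\alpha)^2 \sin(f_\beta)^2} \ ,
\]
which is the middle equality.

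For the remaining identification I would compute $h_{\alpha,\beta}(f)^2 = \sin(d)^2 = 1 - \cos(d)^2$ using that $p$ is represented by $g_x = \arccos(\cos(d)\cos(x-\tau))$ for some $\tau \in \R$ as established in subsection~\ref{sec:hemisphere}. Expanding $\cos(d)\cos(x-\tau) = a\cos(x) + b\sin(x)$ with $a^2+b^2 = \cos(d)^2$, the constraints $g_\alpha = f_\alpha$ and $g_\beta = f_\beta$ furnish a $2\times 2$ linear system in $(a,b)$ with determinant $\sin(\beta-\alpha)$. This determinant is nonzero exactly because $d(\alpha,\beta) \notin \{0,\pi\}$, so Cramer's rule applies; after computing $a^2+b^2$ one arrives at
\[
\cos(d)^2 = \frac{\cos(f_\alpha)^2 + \cos(f_\beta)^2 - 2\cos(\beta-\alpha)\cos(f_\alpha)\cos(f_\beta)}{\sin(\beta-\alpha)^2} \ .
\]
Hence $h_{\alpha,\beta}(f)^2 = 1 - \cos(d)^2 = e_{\alpha,\beta}(f)$, and dividing by $\sin(f_\alpha)^2\sin(f_\beta)^2$ reproduces $p_{\alpha,\beta}(f)$ by the decomposition \eqref{eq:coefficientdef}.

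The proof is essentially a bookkeeping exercise, since both the spherical law of sines argument and the linear algebra derivation of $\cos(d)^2$ are already carried out above the statement; the only mild technical care needed is justifying the existence and uniqueness of $p$ and keeping track of the sign in $V = |\sin(\beta-\alpha)|\sin(d)$ so that $h_{\alpha,\beta}(f) \geq 0$ is consistent with the positive square root $e_{\alpha,\beta}(f)^{1/2}$.
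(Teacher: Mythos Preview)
Your proposal is correct and follows exactly the argument the paper gives in the discussion preceding the lemma: the spherical law of sines yields \eqref{eq:calibheight1}, the linear system for $(a,b)$ gives the height formula \eqref{eq:heightformula}, and combining these with the decomposition \eqref{eq:coefficientdef} identifies both expressions with $p_{\alpha,\beta}(f)$. As you note yourself, the lemma is really a summary of the computations already carried out, so there is nothing to add.
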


Having established this geometric perspective, we can now deduce quantitative information about the coefficients.

\begin{lem}
	\label{lem:pfunction}
For $f \in E(\bS^1) \setminus \bS^1$ and $\alpha,\beta \in \bS^1$ with $d(\alpha,\beta) \notin \{0,\pi\}$, we have:
\begin{enumerate}
	\item $p_{\alpha,\beta}(f) = p_{\beta,\alpha}(f)$.
	\item $p_{\alpha,\beta}(f)$ is $\pi$-periodic in $\alpha$ and $\beta$.
	\item $p_{\alpha,\beta}(f) \geq 0$ with equality if and only if one the values $f_\alpha$, $f_\beta$ and $d(\alpha,\beta)$ is the sum of the other two.
	\item $\sup_{\alpha \neq \beta \text{ \rm mod } \pi} p_{\alpha,\beta}(f) \leq \sin(\dist(f,\bS^1))^{-2}$.
\end{enumerate}
\end{lem}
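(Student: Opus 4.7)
The plan is to dispatch all four items by combining direct formula manipulation with the geometric interpretation from Lemma~\ref{lem:geometriccoeff}.

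For (1), symmetry is immediate from inspection: both the numerator $1 - \cos(\beta-\alpha)^2 - \cos(f_\alpha)^2 - \cos(f_\beta)^2 + 2\cos(\beta-\alpha)\cos(f_\alpha)\cos(f_\beta)$ and the denominator $\sin(\beta-\alpha)^2 \sin(f_\alpha)^2 \sin(f_\beta)^2$ are invariant under swapping $(\alpha,f_\alpha)\leftrightarrow(\beta,f_\beta)$. For (2), I would use the injective-hull relation $f_{\alpha+\pi} = \pi - f_\alpha$ from Proposition~\ref{prop:injectivehull}(1), giving $\cos(f_{\alpha+\pi}) = -\cos(f_\alpha)$, $\sin(f_{\alpha+\pi})^2 = \sin(f_\alpha)^2$, and $\cos(\beta-(\alpha+\pi)) = -\cos(\beta-\alpha)$, $\sin(\beta-(\alpha+\pi))^2 = \sin(\beta-\alpha)^2$. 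The squared cosines and sines are unchanged, and the only mixed term $2\cos(\beta-\alpha)\cos(f_\alpha)\cos(f_\beta)$ picks up two sign flips, hence stays the same. Thus $p_{\alpha+\pi,\beta}(f) = p_{\alpha,\beta}(f)$, and symmetry gives periodicity in $\beta$ too.

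For (3), the identity $p_{\alpha,\beta}(f) = h_{\alpha,\beta}(f)^2 / (\sin(f_\alpha)^2\sin(f_\beta)^2)$ from Lemma~\ref{lem:geometriccoeff} makes nonnegativity obvious. Equality holds iff $h_{\alpha,\beta}(f) = 0$, i.e.\ the associated sphere point $p \in S^2_+$ lies on the equator $\R^2 \times \{0\}\cap S^2_+ = S^1$. Since $\alpha, \beta \in S^1$ already, this happens exactly when $\alpha, \beta, p$ lie on the great circle $S^1$, i.e.\ one of the three spherical triangle sides equals the sum of the other two: $d(\alpha,\beta) = f_\alpha + f_\beta$ (if $p$ lies on the arc from $\alpha$ to $\beta$), $f_\alpha = d(\alpha,\beta) + f_\beta$ (if $\beta$ lies between $p$ and $\alpha$), or $f_\beta = d(\alpha,\beta) + f_\alpha$ (if $\alpha$ lies between $p$ and $\beta$).

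For (4), I would again use the geometric interpretation $p_{\alpha,\beta}(f) = h^2/(\sin(f_\alpha)^2\sin(f_\beta)^2)$, where $h = \sin(d(p,S^1))$ is the Euclidean height of $p$ above the equatorial plane, and $d(p,S^1)\in [0,\tfrac{\pi}{2}]$. Because $p$ lies on a minimizing geodesic from $\beta$ to $-\beta$, both in $S^1$, we have $d(p,\pm\beta)\in\{f_\beta,\pi-f_\beta\}$, so $d(p,S^1) \leq \min(f_\beta,\pi-f_\beta)\leq \tfrac{\pi}{2}$; since $\sin$ is increasing on $[0,\tfrac{\pi}{2}]$ and $\sin(\min(f_\beta,\pi-f_\beta)) = \sin(f_\beta)$, this yields $h\leq \sin(f_\beta)$ and hence $p_{\alpha,\beta}(f)\leq \sin(f_\alpha)^{-2}$. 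The same reasoning applied to $f$ itself—namely $\dist(f,S^1) = \min_x f(x) \leq \min(f_\alpha,\pi-f_\alpha)\leq \tfrac{\pi}{2}$ using $f_{-\alpha} = \pi - f_\alpha$—gives $\sin(\dist(f,S^1))\leq \sin(f_\alpha)$, completing the bound. I expect no serious obstacle; the only subtle point is being careful that $\sin$ is not monotone on all of $[0,\pi]$, so one must consistently fold $f_\alpha$ into $[0,\tfrac{\pi}{2}]$ via the involution $\alpha\mapsto -\alpha$ before applying monotonicity.
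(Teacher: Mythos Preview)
Your proof is correct and follows essentially the same approach as the paper: parts (1) and (2) are handled by direct inspection and the antipodal identity $f_{\alpha+\pi}=\pi-f_\alpha$, while (3) and (4) are derived from the geometric interpretation in Lemma~\ref{lem:geometriccoeff}. The paper's proof of (3) and (4) is a one-line appeal to that lemma; you have simply spelled out the details (the bound $h\leq\sin(f_\beta)$ via $d(p,S^1)\leq\min(f_\beta,\pi-f_\beta)$, and $\dist(f,S^1)=\min_x f(x)$ from \eqref{eq:disttoboundary}), which is exactly what is implicit there.
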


\begin{proof}
(1) is immediate from the definition, and (2) follows from the antipodal symmetry inherent in $E(\bS^1)$. Specifically, the relation $f_\alpha + f_{\alpha + \pi} = \pi$ implies
\[
\cos(f_{\alpha + \pi}) = \cos(\pi - f_\alpha) = -\cos(-f_\alpha) = -\cos(f_\alpha) ,
\]
while it is also clear that $\cos(\beta - \alpha - \pi) = -\cos(\beta - \alpha)$. Therefore, the sign changes that appear when transforming $p_{\alpha,\beta}(f)$ into $p_{\alpha + \pi,\beta}(f)$ cancel out. Statements (3) and (4) follow directly from the geometric interpretation of $p_{\alpha,\beta}(f)$ given in Lemma~\ref{lem:geometriccoeff}.
\end{proof}

For technical reasons, we consider the subset $E^+(\bS^1)$ consisting of functions $f \in E(\bS^1)$ for which $p_{\alpha,\beta}(f) > 0$ for all $\alpha \neq \beta \text{ \rm mod } \pi$.

\begin{lem}
	\label{lem:epluslem}
If $f \in E(\bS^1)$ satisfies $\Lip(f) < 1$, then $f \in E^+(\bS^1)$. In particular, $E^+(\bS^1)$ is dense in $E(\bS^1)$.
\end{lem}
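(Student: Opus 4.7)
The plan is to apply Lemma~\ref{lem:pfunction}(3): $p_{\alpha,\beta}(f)=0$ (for $\alpha\neq\beta$ mod $\pi$) forces one of the three numbers $f_\alpha$, $f_\beta$, $d(\alpha,\beta)$ to equal the sum of the other two. I will show that each of the three resulting equalities is incompatible with $\Lip(f)<1$.

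The two cases $f_\alpha=f_\beta+d(\alpha,\beta)$ and $f_\beta=f_\alpha+d(\alpha,\beta)$ are immediate: both yield $|f_\alpha-f_\beta|=d(\alpha,\beta)$, which already forces $\Lip(f)\geq 1$. The remaining case is $f_\alpha+f_\beta=d(\alpha,\beta)$ with $\alpha\neq\beta$ mod $\pi$; normalizing so that $\alpha<\beta<\alpha+\pi$ and using the identification $t\mapsto\gamma(t)$, one has $d(\alpha,\beta)=\beta-\alpha$. The key observation is to combine this equality with the antipodal symmetry from the identification of $E(S^1)$, namely $f_{\alpha+\pi}=\pi-f_\alpha$. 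Subtracting,
\[
f_\beta-f_{\alpha+\pi}=(\beta-\alpha)-f_\alpha-(\pi-f_\alpha)=\beta-\alpha-\pi,
\]
so $|f_\beta-f_{\alpha+\pi}|=\pi-(\beta-\alpha)=d(\beta,\alpha+\pi)$. Thus the Lipschitz constant of $f$ is attained on the pair $(\beta,\alpha+\pi)$, contradicting $\Lip(f)<1$. This establishes the first assertion.

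For the density statement, I would approximate an arbitrary $f\in E(S^1)$ by its convex combinations with the constant function $\tfrac{\pi}{2}$, which itself lies in $E(S^1)$ by Proposition~\ref{prop:injectivehull}(1). Setting
\[
f_t\defl (1-t)f+t\tfrac{\pi}{2},\qquad t\in(0,1],
\]
the convexity of $E(S^1)$ in $\ell_\infty(S^1)$ (Proposition~\ref{prop:injectivehull}(2)) gives $f_t\in E(S^1)$, while $\Lip(f_t)\leq(1-t)\Lip(f)\leq 1-t<1$. The first part then yields $f_t\in E^+(S^1)$, and $\|f_t-f\|_\infty=t\|f-\tfrac{\pi}{2}\|_\infty\to 0$ as $t\to 0^+$.

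The only nontrivial step is the case $f_\alpha+f_\beta=d(\alpha,\beta)$; exploiting the antipodal relation $f_{\alpha+\pi}+f_\alpha=\pi$ to relocate the forced unit slope to the pair $(\beta,\alpha+\pi)$ is the crux of the argument.
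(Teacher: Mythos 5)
Your argument is correct and follows essentially the same route as the paper: reduce via Lemma~\ref{lem:pfunction}(3) to a degenerate triangle, handle the two easy cases by $|f_\alpha-f_\beta|=d(\alpha,\beta)$, and in the case $f_\alpha+f_\beta=d(\alpha,\beta)$ use the antipodal relation $f_{\alpha+\pi}=\pi-f_\alpha$ to produce the unit slope on the pair $(\beta,\alpha+\pi)$, exactly as in the paper's proof. The density argument via convex combinations with the constant function $\tfrac{\pi}{2}$ is also the paper's argument.
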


\begin{proof}
Assume that $f \notin E^+(\bS^1)$. By Lemma~\ref{lem:pfunction} there exist $\alpha,\beta \in \R$ with $\alpha \neq \beta \text{ \rm mod } \pi$ such that the triple $f_\alpha$, $f_\beta$, $d(\alpha,\beta)$ forms a degenerate triangle. Since $f \in E(\bS^1)$, the triangle inequality is satisfied: $d(\alpha,\beta) \leq f_\alpha + f_\beta$ and $|f_\alpha - f_\beta| \leq d(\alpha,\beta)$. Hence, the only possibilities for degeneracy are:
\begin{itemize}
	\item $f_\alpha = f_\beta + d(\alpha,\beta)$,
	\item $f_\beta = f_\alpha + d(\alpha,\beta)$, or
	\item $d(\alpha,\beta) = f_\alpha + f_\beta$.
\end{itemize}
In the first two cases, we have $|f_\alpha - f_\beta| = d(\alpha,\beta)$, which implies $\Lip(f) = 1$. In the third case, let $\alpha' \defl \alpha + \pi$. Then
\[
d(\alpha',\beta) = \pi - d(\alpha,\beta) = \pi - f_\alpha - f_\beta = f_{\alpha'} - f_\beta.
\]
Hence, again $\Lip(f) = 1$. This proves the first statement.

The second statement is immediate: For any $\lambda \in (0,1)$ and $f \in E(\bS^1)$, define $f_\lambda \defl (1-\lambda)\frac{\pi}{2} + \lambda f$. Since $E(\bS^1)$ is convex, it follows that $f_\lambda \in E(\bS^1)$. Moreover, $\Lip(f_\lambda) \leq \lambda$, and
\[
\|f - f_\lambda\|_\infty = (1-\lambda)\|f - \tfrac{\pi}{2}\|_\infty \leq (1-\lambda)\tfrac{\pi}{2} .
\]
\end{proof}

\subsection{Coefficient estimates}

For two functions $f^0,f^1 \in E_\varepsilon(\bS^1)$, the convex combination $f^t \defl (1-t)f^0 + tf^1$, with $t \in [0,1]$, also lies in $E_\varepsilon(\bS^1)$ by Lemma~\ref{lem:truncated}. Since we wish to interchange integration over $p_{\alpha,\beta}(f^t)$ with differentiation in $t$, we are interested in uniform bounds for $p_{\alpha,\beta}(f^t)$ and its derivatives. Lemma~\ref{lem:pfunction}(4) shows that $p_{\alpha,\beta}(f^t)$ is bounded by a constant depending only on $\varepsilon$. This is the main reason for working with the truncated space $E_\varepsilon(\bS^1)$ instead of the full injective hull. Maybe this restriction, also in the main theorem, can be avoided with a more careful study.

The function
\begin{equation}
\label{eq:pdefinition}
p(a,x,y) \defl \frac{1 - \cos(a)^2 - \cos(x)^2 - \cos(y)^2 + 2 \cos(a)\cos(x)\cos(y)}{\sin(a)^2\sin(x)^2\sin(y)^2}
\end{equation}
is defined for $a,x,y \in \R \setminus \pi\Z$. It is clear that $p$ is symmetric and smooth. The partial derivatives are stated below.

\begin{lem}
	\label{lem:derivative}
The first and second derivatives of $p$ in $(a,x,y)$ are given by
\begin{align*}
p_x & = 2\frac{(\cos(a)\cos(x) - \cos(y))(\cos(a) - \cos(x)\cos(y))}{\sin(a)^2\sin(x)^3\sin(y)^2} , \\
p_{xx} & = 2\frac{\cos(a)\cos(x)\cos(y)(5 + \cos(x)^2) - (1 + 2\cos(x)^2)(\cos(a)^2 + \cos(y)^2)}{\sin(a)^2\sin(x)^4\sin(y)^2} , \\
p_{xy} & = 2\frac{\cos(a)(1 + \cos(x)^2)(1 + \cos(y)^2) - 2\cos(x)\cos(y)(1 + \cos(a)^2)}{\sin(a)^2\sin(x)^3\sin(y)^3} .
\end{align*}
\end{lem}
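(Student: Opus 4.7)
The statement is a direct computational lemma: we need only differentiate the explicit rational trigonometric expression defining $p(a,x,y)$. The plan is therefore to apply the quotient rule systematically, exploit the symmetry of $p$ in its three arguments, and then perform the algebraic simplifications needed to obtain the compact factored forms stated in the lemma.

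Write $p = N/D$, where
\[
N(a,x,y) = 1 - \cos(a)^2 - \cos(x)^2 - \cos(y)^2 + 2\cos(a)\cos(x)\cos(y)
\]
and $D(a,x,y) = \sin(a)^2\sin(x)^2\sin(y)^2$. First I would compute
\[
N_x = 2\sin(x)(\cos(x) - \cos(a)\cos(y)), \qquad D_x = 2\sin(x)\cos(x)\sin(a)^2\sin(y)^2,
\]
and then form $p_x = (N_x D - N D_x)/D^2$. Factoring out $2/(\sin(a)^2\sin(x)^3\sin(y)^2)$, one is left to verify the polynomial identity
\[
(\cos(x) - \cos(a)\cos(y))\sin(x)^2 - \cos(x)\,N = (\cos(a)\cos(x) - \cos(y))(\cos(a) - \cos(x)\cos(y))
\]
in the three variables $\cos(a), \cos(x), \cos(y)$, after replacing $\sin(x)^2 = 1 - \cos(x)^2$. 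This is a finite polynomial comparison and produces the stated formula for $p_x$.

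For $p_{xx}$ I would apply the quotient rule to the already factored expression for $p_x$, treating numerator $A(x) = (\cos(a)\cos(x) - \cos(y))(\cos(a) - \cos(x)\cos(y))$ and denominator $\sin(x)^3$ (with the $a$- and $y$-factors kept aside). After expansion and a second $\sin(x)^2 \mapsto 1 - \cos(x)^2$ substitution, the target formula for $p_{xx}$ follows from another polynomial identity. For $p_{xy}$ I would differentiate $p_x$ in $y$, again using the already factored numerator. By the manifest symmetry of $p$ in $x$ and $y$, the result must be symmetric in $x,y$, which serves as a consistency check on the stated expression for $p_{xy}$.

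The whole argument is algebraic, with no analytic subtlety: the derivatives exist classically on the stated domain $\R \setminus \pi\Z$ because $\sin(a)\sin(x)\sin(y) \neq 0$ there. The only genuine obstacle is bookkeeping in the polynomial identities, since the numerators involve up to four copies of the cosines and a careful collection of terms is required to recognize the compact factorizations; this can be made painless by expanding everything as polynomials in $c_a = \cos(a)$, $c_x = \cos(x)$, $c_y = \cos(y)$ and comparing coefficients monomial by monomial.
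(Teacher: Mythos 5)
Your proposal is correct and is exactly the intended argument: the paper explicitly leaves this lemma's proof to the reader as a routine differentiation, and your plan (quotient rule, substituting $\sin^2 = 1-\cos^2$, and verifying the resulting polynomial identities in $\cos(a),\cos(x),\cos(y)$, with the $x\leftrightarrow y$ symmetry as a consistency check) carries it out; in particular your key identity for $p_x$ factors as $(\cos(a)\cos(x)-\cos(y))(\cos(a)-\cos(x)\cos(y))$ as claimed, and the analogous computations reproduce the stated $p_{xx}$ and $p_{xy}$.
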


The proof is left to the reader. We now establish uniform bounds for the first and second derivatives of the variation $t \mapsto p_{\alpha,\beta}((1-t)f^0 + t f^1)$.

\begin{lem}
	\label{lem:unifboundder}
There is a constant $C > 0$ with the following property: If $\varepsilon \in (0,\frac{\pi}{2})$ and $f^t \defl (1-t)f^0 + t f^1$ for $f^0, f^1 \in E_\varepsilon(\bS^1)$ and $t \in [0,1]$, then
\begin{align*}
\sup_{t \in [0,1], \alpha \neq \beta \text{ \rm mod } \pi} \left|\tfrac{d}{dt}p_{\alpha,\beta}(f^t)\right| & \leq C\sin(\varepsilon)^{-6} , \\
\sup_{t \in [0,1], \alpha \neq \beta \text{ \rm mod } \pi} \left|\tfrac{d^2}{dt^2}p_{\alpha,\beta}(f^t)\right| & \leq C\sin(\varepsilon)^{-8} , \\
\sup_{t \in [0,1], \alpha \neq \beta \text{ \rm mod } \pi} \left|\sin(\beta-\alpha)\tfrac{d}{dt}p_{\alpha,\beta}(f^t)\right| & \leq C \sin(\varepsilon)^{-5} \|f^1 - f^0\|_\infty , \\
\sup_{t \in [0,1], \alpha \neq \beta \text{ \rm mod } \pi} \left|\sin(\beta-\alpha)^{2}\tfrac{d^2}{dt^2}p_{\alpha,\beta}(f^t)\right| & \leq C \sin(\varepsilon)^{-6} \|f^1 - f^0\|_\infty^2 .
\end{align*}
\end{lem}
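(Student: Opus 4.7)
Set $a \defl \beta - \alpha$ and $\Delta_\gamma \defl g_\gamma - f_\gamma$, so that the chain rule yields
\[
\tfrac{d}{dt}p_{\alpha,\beta}(f^t) = p_x \Delta_\alpha + p_y \Delta_\beta, \qquad \tfrac{d^2}{dt^2}p_{\alpha,\beta}(f^t) = p_{xx}\Delta_\alpha^2 + 2 p_{xy}\Delta_\alpha\Delta_\beta + p_{yy}\Delta_\beta^2,
\]
with partials evaluated at $(a, f^t_\alpha, f^t_\beta)$. Since $f,g \in E_\varepsilon(S^1)$ are $1$-Lipschitz with $f_{\gamma+\pi} = \pi - f_\gamma$ and similarly for $g$, the difference is $2$-Lipschitz and satisfies $\Delta_{\gamma+\pi} = -\Delta_\gamma$; consequently
\[
|\Delta_\gamma| \leq \|f-g\|_\infty, \qquad |\Delta_\alpha - \Delta_\beta| \leq 2|a|, \qquad |\Delta_\alpha + \Delta_\beta| = |\Delta_\alpha - \Delta_{\beta\mp\pi}| \leq 2|a\pm\pi|.
\]
Since $|\sin(a)|$ is comparable to $\dist(a,\pi\Z)$ on bounded intervals, this yields $|\Delta_\alpha - \Delta_\beta| \leq C|\sin(a)|$ near $a = 0$ and $|\Delta_\alpha + \Delta_\beta| \leq C|\sin(a)|$ near $a = \pm\pi$.

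The bounds (iii) and (iv) are immediate. Reading off the numerators in Lemma~\ref{lem:derivative} and using $\sin(f^t_\gamma) \geq \sin(\varepsilon)$ gives the crude estimates $|p_x|, |p_y| \leq C \sin(\varepsilon)^{-5}\sin(a)^{-2}$ and $|p_{xx}|, |p_{xy}|, |p_{yy}| \leq C \sin(\varepsilon)^{-6}\sin(a)^{-2}$. Multiplying the chain rule expressions by $\sin(a)$ and $\sin(a)^2$ respectively and inserting $|\Delta_\gamma| \leq \|f - g\|_\infty$ produces the two claims.

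For the uniform bound (i), decompose
\[
p_x\Delta_\alpha + p_y\Delta_\beta = \tfrac{1}{2}(p_x + p_y)(\Delta_\alpha + \Delta_\beta) + \tfrac{1}{2}(p_x - p_y)(\Delta_\alpha - \Delta_\beta),
\]
and apply the identities
\begin{align*}
\sin(y)(\cos(a)\cos(x)-\cos(y)) + \sin(x)(\cos(a)\cos(y)-\cos(x)) &= \sin(x+y)(\cos(a)-\cos(x-y)), \\
\sin(y)(\cos(a)\cos(x)-\cos(y)) - \sin(x)(\cos(a)\cos(y)-\cos(x)) &= \sin(y-x)(\cos(a)-\cos(x+y)),
\end{align*}
derived from $\cos A - \cos B = -2\sin(\tfrac{A+B}{2})\sin(\tfrac{A-B}{2})$, to obtain the explicit factorization
\[
p_x \pm p_y = \frac{2(\cos(a) - \cos(x)\cos(y))\,\sin(x \pm y)(\cos(a) - \cos(x \mp y))}{\sin(a)^2 \sin(x)^3 \sin(y)^3}.
\]
Since $|f^t_\alpha - f^t_\beta| \leq |a|$ implies $|\cos(a) - \cos(x-y)| \leq C\sin(a)^2$ near $a = 0$, and $|f^t_\alpha + f^t_\beta - \pi| = |f^t_\alpha - f^t_{\beta+\pi}| \leq |a - \pi|$ (and its $-\pi$ counterpart) implies $|\sin(x+y)| \leq C|\sin(a)|$ near $a = \pm\pi$, one checks regime by regime that pairing these estimates with the matching bounds on $|\Delta_\alpha \pm \Delta_\beta|$ makes each product bounded by $C\sin(\varepsilon)^{-6}$.

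The bound (ii) is obtained analogously from the decomposition
\[
p_{xx}\Delta_\alpha^2 + 2p_{xy}\Delta_\alpha\Delta_\beta + p_{yy}\Delta_\beta^2 = \tfrac{1}{4}(p_{xx}{+}2p_{xy}{+}p_{yy})(\Delta_\alpha{+}\Delta_\beta)^2 + \tfrac{1}{4}(p_{xx}{-}2p_{xy}{+}p_{yy})(\Delta_\alpha{-}\Delta_\beta)^2 + \tfrac{1}{2}(p_{xx}{-}p_{yy})(\Delta_\alpha{+}\Delta_\beta)(\Delta_\alpha{-}\Delta_\beta).
\]
The main technical obstacle is to verify, by differentiating the two identities above in $x$ and $y$ (or equivalently combining the formulas of Lemma~\ref{lem:derivative} directly), that the three second-order combinations each factor with enough powers of $\sin(x \pm y)$ or $\cos(a) - \cos(x \mp y)$ to absorb the $\sin(a)^{-2}$ singularity when multiplied by $(\Delta_\alpha \pm \Delta_\beta)^2$ or the mixed product. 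The same two geometric mechanisms as in (i) ($|x-y| \leq |a|$ near $a=0$ and $|x+y-\pi|\leq|a\mp\pi|$ near $a=\pm\pi$) then control every regime, and tracking the lone extra power of $\sin(\varepsilon)^{-1}$ coming from each of $\sin(x)^{-1}$ or $\sin(y)^{-1}$ in the second-order formulas produces the announced bound $C\sin(\varepsilon)^{-8}$.
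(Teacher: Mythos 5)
Your treatment of (iv) and the overall plan for (i) are fine, but two steps do not hold up. First, (iii) is not ``immediate'': the crude bounds $|p_x|,|p_y|\le C\sin(\varepsilon)^{-5}\sin(a)^{-2}$, multiplied by $\sin(a)$ and $|\Delta_\gamma|\le\|f-g\|_\infty$, give only $C\sin(\varepsilon)^{-5}|\sin(a)|^{-1}\|f-g\|_\infty$, which blows up as $\beta-\alpha$ approaches $\pi\Z$. You need the same cancellation you invoke in (i): after reducing to $|a|\le\frac{\pi}{2}$ by the $\pi$-periodicity of $p_{\alpha,\beta}$ (or splitting regimes as you do), the numerator factor obeys $|\cos(a)\cos(x)-\cos(y)|\le|(1-\cos a)\cos x|+|\cos x-\cos y|\le\sin(a)^2+|a|\le C|\sin(a)|$, using $|x-y|=|f^t_\alpha-f^t_\beta|\le|a|$; this supplies the missing power of $\sin(\beta-\alpha)$ at the cost of only $\sin(\varepsilon)^{-5}$, which is exactly how the paper proves (iii).

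Second, for (ii) you state the decisive claim --- that $p_{xx}+2p_{xy}+p_{yy}$, $p_{xx}-2p_{xy}+p_{yy}$ and $p_{xx}-p_{yy}$ gain enough vanishing in $\sin(a)$ under the constraints $|x-y|\le|a|$ (resp.\ the antipodal constraint near $a=\pm\pi$) --- but you explicitly defer it as ``the main technical obstacle'' and never verify it. That verification is the substance of the lemma and cannot be left as an assertion. For instance, for the term paired with $(\Delta_\alpha+\Delta_\beta)^2$ one must show that the numerator $A\sin(y)^2+2C\sin(x)\sin(y)+B\sin(x)^2$ (over the common denominator $\sin(a)^2\sin(x)^4\sin(y)^4$, with $A,B,C$ the numerators from Lemma~\ref{lem:derivative}) is $O(\sin(a)^2)$ when $|x-y|\le|a|$; this is in fact true --- it vanishes at $(a,x,y)=(0,u,u)$, depends on $a$ only through $\cos a$ and is symmetric in $(x,y)$, hence is $O(a^2+(x-y)^2)$ --- but some such argument, or an explicit computation like the one the paper carries out, must be written down, and similarly for the other two combinations. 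For comparison, the paper bypasses your symmetric/antisymmetric decomposition entirely: it reduces to $|\beta-\alpha|\le\frac{\pi}{2}$, sets $q_\gamma=\Delta_\gamma\sin(f_\gamma)^{-1}$ with $|q_\alpha-q_\beta|\le C\sin(\varepsilon)^{-2}|\beta-\alpha|$, replaces $\cos(\beta-\alpha)$ by $1$ in the numerators, bounds the resulting main term by extracting $(q_\alpha-q_\beta)^2$ factors, and controls the replacement error through the factor $1-\cos(\beta-\alpha)\le\sin(\beta-\alpha)^2$. Your part (i), with the factorization of $p_x\pm p_y$ against $\Delta_\alpha\pm\Delta_\beta$ (up to a harmless sign slip $\sin(x-y)$ versus $\sin(y-x)$), is a correct and genuinely different route; but as submitted, (iii) is broken and (ii) is an outline rather than a proof.
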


\begin{proof}
Fix $t \in [0,1]$ and $\alpha,\beta \in \R$ such that $\alpha \neq \beta  \text{ \rm mod } \pi$. By Lemma~\ref{lem:pfunction}, it suffices to consider the case where $0 < |\delta| \leq \frac{\pi}{2}$, with $\delta \defl \beta - \alpha$. In this range, we have $|\sin(\delta)| \leq |\delta| \leq \frac{\pi}{2}|\sin(\delta)|$ and $1 - \cos(\delta) \leq \sin(\delta)^2$. Note that the constants $C_k > 0$ that appear in the estimates below are independent of $\delta$.

We abbreviate $\Delta_x \defl f_x^1 - f_x^0$, $c_x \defl \cos(f^t_x)$ and $s_x \defl \sin(f^t_x)$ for $x \in \{\alpha,\beta\}$, $c_\delta \defl \cos(\delta)$, $s_\delta \defl \sin(\delta)$, $q_x \defl\Delta_x \sin(f^t_x)^{-1}$.

For the first estimate of the lemma we need to bound
\begin{align*}
S_1 & \defl p_x(\beta-\alpha,f^t_\alpha,f^t_\beta)\Delta_\alpha + p_y(\beta-\alpha,f^t_\alpha,f^t_\beta)\Delta_\beta \\
 & = \frac{2(c_\delta - c_\alpha c_\beta)}{s_\delta^2 s_\alpha^2 s_\beta^2}\left[(c_\delta c_\alpha - c_\beta)q_\alpha + (c_\delta c_\beta - c_\alpha)q_\beta\right] .
\end{align*}
Here we used Lemma~\ref{lem:derivative}. Note that $\alpha \mapsto q_\alpha$ satisfies the Lipschitz condition
\begin{align*}
\left|q_\alpha - q_\beta\right| & \leq \tfrac{1}{s_\alpha s_\beta}|s_\alpha\Delta_\beta - s_\beta \Delta_\alpha| = \tfrac{1}{s_\alpha s_\beta}\left|s_\alpha\Delta_\beta - s_\alpha\Delta_\alpha +  s_\alpha \Delta_\alpha - s_\beta\Delta_\alpha\right| \\
 & \leq \sin(\varepsilon)^{-2}(\Lip(\Delta) + \Lip(f^t)\|\Delta\|_\infty)|\delta| \\
 & \leq \sin(\varepsilon)^{-2}(2 + 1\cdot 2\pi)|\delta| \\
 & \leq 3\pi\sin(\varepsilon)^{-2}|\delta| .
\end{align*}
By setting $c_\delta = 1$ in the square bracket of the expression for $S_1$ above, we obtain an upper bound
\begin{align*}
\left|(c_\alpha - c_\beta)q_\alpha + (c_\beta - c_\alpha) q_\beta \right| & = \left|(c_\beta - c_\alpha)\left(q_\alpha - q_\beta\right)\right| \\
& \leq 3\pi \sin(\varepsilon)^{-2} |\delta|^2 \\
& \leq 50 \sin(\varepsilon)^{-2}s_\delta^2.
\end{align*}
The difference to the term with arbitrary $c_\delta$ is bounded by
\begin{align*}
\left|(1 - c_\delta) c_\alpha q_\alpha + (1 - c_\delta) c_\beta q_\beta \right| 
& \leq \left|c_\alpha q_\alpha + c_\beta q_\beta \right|s_\delta^2 \\
& \leq 2\|\Delta\|_\infty \sin(\varepsilon)^{-1}s_\delta^2 \\
& \leq 20\sin(\varepsilon)^{-1}s_\delta^2 .
\end{align*}
Combined, we have
\[
|S_1| \leq \tfrac{2}{s_\alpha^2s_\beta^2}(50 + 20)\sin(\varepsilon)^{-2}|c_\delta - c_\alpha c_\beta| \leq 280 \sin(\varepsilon)^{-6} .
\]
Applying Lemma~\ref{lem:derivative}, the second term we need to estimate is
\begin{align*}
S_2 & \defl p_{xx}(\beta-\alpha,f^t_\alpha,f^t_\beta)\Delta_\alpha^2 + 2 p_{xy}(\beta-\alpha,f^t_\alpha,f^t_\beta)\Delta_\alpha \Delta_\beta + p_{yy}(\beta-\alpha,f^t_\alpha,f^t_\beta)\Delta_\beta^2 \\
 & = \frac{2}{s_\delta^2 s_\alpha^2 s_\beta^2}\bigl[\bigl(c_\delta c_\alpha c_\beta (5 + c_\alpha^2) - (1 + 2c_\alpha^2)(c_\delta^2 + c_\beta^2)\bigr)q_\alpha^2 \\
 & \qquad + \bigl(c_\delta c_\alpha c_\beta (5 + c_\beta^2) - (1 + 2c_\beta^2)(c_\delta^2 + c_\alpha^2)\bigr)q_\beta^2 \\
 & \qquad + \bigl(c_\delta(1 + c_\alpha^2)(1 + c_\beta^2) - 2c_\alpha c_\beta(1 + c_\delta^2)\bigr)q_\alpha q_\beta \bigr].
\end{align*}
Let $A$ be the term in square brackets with $c_\delta$ set to $1$. It reads:
\begin{align*}
A & = \bigl((1 + c_\alpha^2 - 2 c_\alpha c_\beta) (c_\alpha c_\beta - 1) - (c_\alpha - c_\beta)^2\bigr)q_\alpha^2 \\
 & \quad + \bigl((1 + c_\beta^2 - 2 c_\alpha c_\beta) (c_\alpha c_\beta - 1) - (c_\alpha - c_\beta)^2\bigr)q_\beta^2 \\
 & \quad + 2\bigl((c_\alpha - c_\beta)^2 + (c_\alpha c_\beta - 1)^2\bigr)q_\alpha q_\beta .
\end{align*}
An upper bound for $A$ is obtained by
\begin{align*}
|A| & \leq 2\left|(1 + c_\alpha^2 - 2 c_\alpha c_\beta)q_\alpha^2 + (1 + c_\beta^2 - 2 c_\alpha c_\beta)q_\beta^2 + 2(c_\alpha c_\beta - 1)q_\alpha q_\beta\right| \\
 & \quad + (c_\alpha - c_\beta)^2\left|2q_\alpha q_\beta - q_\alpha^2 - q_\beta^2\right| \\
 & = 2\left|(1 - c_\alpha c_\beta)(q_\alpha - q_\beta)^2 + (c_\beta - c_\alpha)(c_\beta q_\beta^2 - c_\alpha q_\alpha^2)\right| \\
 & \quad + (c_\alpha - c_\beta)^2(q_\alpha - q_\beta)^2 \\
 & \leq 2\left(6\Lip(q)^2 + \Lip(\cos(f^t)q^2)\right) \delta^2 \\
 & \leq C_1 \sin(\varepsilon)^{-4} s_\delta^2
\end{align*}
for some constant $C_1 > 0$. In the last line we used $\Lip(q)^2 \leq C_2 \sin(\varepsilon)^{-4}$ and
\[
\Lip(\cos(f^t)q^2) \leq \Lip(f^t)\|q\|^2_\infty + 2\|\cos(f^t)\|_\infty\|q\|_\infty\Lip(q) \leq C_2 \sin(\varepsilon)^{-3}
\]
for some $C_2 > 0$. Since we plugged $c_\delta = 0$ in the square bracket, the absolute value of the difference is
\begin{align*}
|A - \tfrac{1}{2}s_\delta^2s_\alpha^2s_\beta^2S_2| & = \bigl|\left((1-c_\delta)c_\alpha c_\beta(5 + c_\alpha^2) - (1 + 2c_\alpha^2)(1-c_\delta^2)\right) q_\alpha^2 \\
& \quad + \left((1-c_\delta)c_\alpha c_\beta(5 + c_\beta^2) - (1 + 2c_\beta^2)(1-c_\delta^2) \right) q_\beta^2 \\
& \quad + 2\left((1-c_\delta)(1 + c_\alpha^2)(1 + c_\beta^2) - 2 c_\alpha c_\beta(1-c_\delta^2) \right) q_\alpha q_\beta\bigr|.
\end{align*}
This expression contains a factor of $1 - c_\delta$, and is therefore bounded above by $C_3\sin(\varepsilon)^{-2}s_\delta^2$ for some $C_3 > 0$. Hence,
\[
|S_2| \leq C_4\sin(\varepsilon)^{-8}
\]
for some $C_4 > 0$ as claimed. For the third estimate in the lemma, as for $S_1$ above,
\begin{align*}
s_\delta\left.\tfrac{d}{dt}\right|_{t=0} p_{\alpha,\beta}(f^t) & = \frac{2(c_\delta - c_\alpha c_\beta)} {s_\delta s_\alpha^2s_\beta^2} \left(\frac{c_\delta c_\alpha - c_\beta}{s_\alpha}\Delta_\alpha + \frac{c_\delta c_\beta - c_\alpha}{s_\beta}\Delta_\beta\right) .
\end{align*}
Further,
\begin{align*}
|c_\delta c_\alpha - c_\beta| & \leq |(1-c_\delta)c_\alpha| + |c_\alpha - c_\beta| \\
& \leq s_\delta^2 + |\delta| \leq |s_\delta|(1 + \tfrac{\pi}{2}) \\
& \leq 3|s_\delta|
\end{align*}
and similarly for $|c_\delta c_\beta - c_\alpha|$. Consequently,
\begin{equation*}
\left|s_\delta \left.\tfrac{d}{dt}\right|_{t=0} p_{\alpha,\beta}(f^t)\right| \leq \frac{24}{\sin(\varepsilon)^5} \|f^1 - f^0\|_\infty .
\end{equation*}
For the last estimate, the trivial bound
\[
\max\{\Delta_\alpha^2, |\Delta_\alpha\Delta_\beta|, \Delta_\beta^2\} \leq \|f^1 - f^0\|_\infty^2
\]
is applied to $S_2$ as given above.
\end{proof}

\subsection{Action on currents and paths}
\label{sec:actioncurrents}

Let $\varepsilon \in (0,\frac{\pi}{2})$, and assume that $T \in \bM_2(E_\varepsilon(\bS^1))$ is a metric current of finite mass, as recalled in Subsection~\ref{sec:finslermass}. The action of $\omega$ is defined by
\[
T(\omega) \defl \int_0^\pi \int_\alpha^\pi T(p_{\alpha,\beta}(f) \, d\pi_\alpha \wedge d\pi_\beta) \, d\beta \, d\alpha .
\]
In the notation of metric currents, the integrand can be expressed as $T(p_{\alpha,\beta}, \pi_\alpha, \pi_\beta)$, where $\pi_x : E(\bS^1) \to \R$ is the evaluation map $\pi_x(f) \defl f_x$ for $x \in \R$.

\begin{lem}
	\label{lem:action}
If $T \in \bM_2(E_\varepsilon(\bS^1))$ for some $\varepsilon \in (0,\frac{\pi}{2})$, then $T(\omega)$ is well-defined and depends only on $\partial T$. Moreover
\[
|T(\omega)| \leq C \bM(T) ,
\]
for some $C(\varepsilon) > 0$.
\end{lem}

\begin{proof}
The function
\[
(0,\pi)^3 \ni (a,x,y) \mapsto p(a,x,y),
\]
as defined in \eqref{eq:pdefinition}, is smooth. Consequently, the map
\[
\{(s,t) : 0 < s < t < \pi\} \times [\varepsilon, \pi - \varepsilon]^2 \ni (\alpha,\beta,x,y) \mapsto p(\beta - \alpha,x,y)
\]
is continuous in $(\alpha,\beta)$ and Lipschitz in $(x,y)$. Since $p_{\alpha,\beta}(f) = p(\beta - \alpha,f_\alpha,f_\beta)$, the function
\[
\{(s,t) : 0 < s < t < \pi\} \times E_\varepsilon(\bS^1) \ni (\alpha,\beta,f) \mapsto p_{\alpha,\beta}(f)
\]
is continuous in $(\alpha,\beta)$ and Lipschitz in $f$. Since the evaluation functionals $\pi_\alpha,\pi_\beta : E_\varepsilon(\bS^1) \to \R$ are also Lipschitz, we conclude that $T(p_{\alpha,\beta}, \pi_\alpha, \pi_\beta)$ is well-defined.

Since every $f \in E(\bS^1)$ is continuous, the evaluation $\alpha \mapsto \pi_\alpha(f)$ depends continuously on $\alpha$. Similarly, $(\alpha,\beta) \mapsto p_{\alpha,\beta}(f)$ is continuous and $f \mapsto p_{\alpha,\beta}(f)$ has a locally bounded Lipschitz constant. By the continuity axiom for metric currents, it follows that $(\alpha,\beta) \mapsto T(p_{\alpha,\beta}, \pi_\alpha, \pi_\beta)$ is continuous. Moreover, since $p_{\alpha,\beta}(f)$ is uniformly bounded by Lemma~\ref{lem:pfunction}(4) and $T$ has finite mass, the function $(\alpha, \beta) \mapsto T(p_{\alpha,\beta}, \pi_\alpha, \pi_\beta)$ is bounded and therefore integrable. Hence, $T(\omega)$ is well defined.

Furthermore,
\[
T(p_{\alpha,\beta}(f)\, d\pi_\alpha \wedge d\pi_\beta) = (\pi_\alpha,\pi_\beta)_\# T(g_{\alpha,\beta}(x,y)\,dx\wedge dy)
\]
where $g_{\alpha,\beta} : (0,\pi)^2 \to \R$ is the smooth function defined by $g_{\alpha,\beta}(x,y) \defl p(\beta - \alpha, x, y)$. Since $g_{\alpha,\beta}(x,y)\,dx \wedge dy$ is a closed 2-form on the contractible domain $(0,\pi)^2$, it is exact by the Poincaré lemma; that is, there exists a 1-form $\mu$ such that $g_{\alpha,\beta}(x,y)\,dx\wedge dy = d\mu$. Thence $(\pi_\alpha,\pi_\beta)_\# T(g_{\alpha,\beta}(x,y)\,dx\wedge dy) = (\pi_\alpha,\pi_\beta)_\# (\partial T)(\mu )$. This shows that $T(\omega)$ depends only on $\partial T$.

The mass bound follows directly from the uniform boundedness of $p_{\alpha,\beta}(f)$ and the fact that the evaluation maps $\pi_x : E(\bS^1) \to \R$ are $1$-Lipschitz.
\end{proof}

As noted in Subsection~\ref{subsec:inj_hull}, the metric space $E(\bS^1)$ can be identified isometrically with a subset of the space of measurable, essentially bounded, $2\pi$-periodic functions on $\R$. The latter is isometrically isomorphic to $L^\infty([0,2\pi))$. Since $L^\infty([0,2\pi))$ is the dual of the separable Banach space $L^1([0,2\pi))$, the results at the end of Subsection~\ref{sec:finslermass} apply. In particular, any $T \in \cR_2(E(\bS^1))$ can be represented by $\curr{S,\theta,\tau}$ in $\cR_2(L^\infty([0,2\pi)))$.

\begin{lem}
	\label{lem:actiononpaths}
Let $T \in \cR_2(E_\varepsilon(\bS^1))$ for some $\varepsilon \in (0,\frac{\pi}{2})$ with representation $\curr{S,\theta,\tau}$ in $\cR_2(L^\infty([0,2\pi)))$. Then
\begin{align*}
T(\omega) & = \int_S \theta(f) \int_0^\pi \int_\alpha^\pi p_{\alpha,\beta}(f) (\tau_{1,\alpha}(f) \tau_{2,\beta}(f) - \tau_{1,\beta}(f) \tau_{2,\alpha}(f)) \, d\beta \, d\alpha \,d\cH^2(f) .
\end{align*}
\end{lem}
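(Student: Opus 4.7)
The plan is to unfold the definition of $T(\omega)$, apply the Ambrosio-Kirchheim integral representation \eqref{eq:currentrepres} to the inner current action $T(p_{\alpha,\beta}(f),\pi_\alpha,\pi_\beta)$, and then exchange the order of integration via Fubini. By the definition stated at the start of this subsection,
\[
T(\omega) = \int_0^\pi\!\int_\alpha^\pi T(p_{\alpha,\beta}(f),\pi_\alpha,\pi_\beta)\,d\beta\,d\alpha,
\]
where $\pi_x(f) = f_x$ are the coordinate projections on $L^\infty([0,2\pi))$.

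The first step is to identify the tangential differentials $d^S_f \pi_x$. Each evaluation map $\pi_x$ is linear and $1$-Lipschitz, so its weak-star derivative is $\pi_x$ itself at every point. Applying the defining property $\operatorname{wd}(\pi_x \circ \varphi)_y = d^S_{\varphi(y)}\pi_x \circ \operatorname{wd}\varphi_y$ to a bi-Lipschitz parametrization $\varphi : K_i \to S$ from Definition~\ref{def:rectifiable}, one reads off that $d^S_f\pi_x(v) = v_x$ on the $2$-dimensional tangent space $\operatorname{Tan}(S,f)$ for a.e.\ $x \in [0,2\pi)$. Consequently, the wedge pairing expands as a $2\times 2$ determinant
\[
\langle d^S_f\pi_\alpha \wedge d^S_f\pi_\beta,\, \tau_1(f)\wedge\tau_2(f)\rangle = \tau_{1,\alpha}(f)\tau_{2,\beta}(f) - \tau_{1,\beta}(f)\tau_{2,\alpha}(f),
\]
and substituting into \eqref{eq:currentrepres} produces the integrand on the right-hand side of the claim.

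The second step is to justify Fubini on $[0,\pi) \times [\alpha,\pi) \times S$. The coefficient $p_{\alpha,\beta}(f)$ is uniformly bounded by $\sin(\varepsilon)^{-2}$ on $E_\varepsilon(S^1)$ thanks to Lemma~\ref{lem:pfunction}(4); the density $\theta$ is $\cH^2$-integrable over $S$ since $\bM(T) < \infty$; and after decomposing $S$ into bi-Lipschitz pieces $\varphi_i(K_i)$ as in Definition~\ref{def:rectifiable}, each tangent vector $\tau_j(f)$ is proportional to a weak-star derivative of $\varphi_i$, hence its $L^\infty([0,2\pi))$-norm is controlled by $\Lip(\varphi_i)$. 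This gives a joint essential bound on $\tau_{j,x}(f)$ in the variables $(x,f)$ on every piece, and combined with the bounded domain in $(\alpha,\beta)$ and finite mass, Fubini applies piecewise and assembles into the full identity.

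The main obstacle I expect is the careful pointwise-in-$x$ interpretation of $d^S\pi_x$ and the accompanying joint measurability and essential boundedness of $\tau_{j,x}(f)$ in $(x,f)$; this is precisely what forces the piecewise bi-Lipschitz decomposition of $S$ before invoking Fubini. Once those technicalities are arranged on each parametrized piece, the lemma follows as a direct consequence of \eqref{eq:currentrepres}.
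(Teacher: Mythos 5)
Your step (2) (the Fubini bookkeeping) is essentially harmless, but step (1) contains the genuine gap, and it is exactly the point the paper's proof is organized around. You write that ``each evaluation map $\pi_x$ is linear and $1$-Lipschitz, so its weak-star derivative is $\pi_x$ itself,'' and conclude $d^S_f\pi_x(v) = v_x$ on $\operatorname{Tan}(S,f)$. But once $T$ is represented as $\curr{S,\theta,\tau}$ in $\cR_2(L^\infty([0,2\pi)))$, the relevant functionals must live on $L^\infty([0,2\pi))$, and evaluation at a fixed point $x$ is \emph{not} a well-defined linear functional there: elements are a.e.-equivalence classes, the tangent vectors $\tau_1(f),\tau_2(f)$ arise as weak-star derivatives and are general $L^\infty$ elements, and weak-star convergence in $L^\infty = (L^1)^\ast$ gives no control on values at a single $x$. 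So the chain-rule identity $\operatorname{wd}(\pi_x\circ\varphi)_y = d^S_{\varphi(y)}\pi_x\circ\operatorname{wd}\varphi_y$ cannot be ``read off'' with $d^S_f\pi_x(v)=v_x$: even though $\pi_x$ is $1$-Lipschitz on $E_\varepsilon(S^1)$ (whose elements are genuine Lipschitz functions), there is no argument in your proposal that differentiation of a parametrization commutes with evaluation at a \emph{fixed} coordinate $x$, and the expression $v_x$ for $v\in\operatorname{Tan}(S,f)\subset L^\infty$ is itself only meaningful for a.e.\ $x$. The paper flags precisely this issue (the coordinate projections are ``ill-defined as proper linear functionals'').

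The paper's proof resolves it by smoothing: it replaces $\pi_x$ by $\pi_x^\delta = \pi_x\circ A_\delta$, where $A_\delta$ is averaging over $[x-\delta,x+\delta]$. Each $\pi_x^\delta$ \emph{is} a bounded linear functional on all of $L^\infty([0,2\pi))$, so the pairing $\left\langle \BigWedge_2 d^S_f(\pi^\delta_\alpha,\pi^\delta_\beta), v\wedge w\right\rangle = v^\delta_\alpha w^\delta_\beta - v^\delta_\beta w^\delta_\alpha$ is legitimate; since $A_\delta$ maps $E_\varepsilon(S^1)$ into itself with $\|A_\delta\|\le 1$ and $A_\delta\to\id$ uniformly there, the continuity axiom for currents gives $T(\omega^\delta)\to T(\omega)$; Fubini is applied to the smoothed integrand (whose joint measurability is checked using the Lipschitz continuity of $\alpha\mapsto v^\delta_\alpha$ and continuity of the coefficients); and finally $\delta\to 0$ is handled by dominated convergence, using that $v^\delta_\alpha(f)\to v_\alpha(f)$ at Lebesgue density points, i.e.\ for a.e.\ $\alpha$. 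To repair your argument you would need either this regularization or some other device that makes the tangential differentials of the coordinate projections well-defined and identifies them a.e.\ in $(\alpha,\beta)$; as written, the central identification is asserted rather than proved.
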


\begin{proof}
The smoothing operator $A_\delta : L^\infty([0,2\pi))\to L^\infty([0,2\pi))$ for $\delta \in (0,\frac{\pi}{2})$ is defined by
\[
f^\delta_\alpha \defl A_\delta(f)_\alpha \defl \frac{1}{2\delta}\int_{\alpha-\delta}^{\alpha+\delta} f_t \,dt .
\]
Here we assume that $f$ is extended to a $2\pi$-periodic function on $\R$. The $\delta$-approximation of $\omega$ is defined by
\[
\omega_f^\delta \defl \int_0^\pi \int_\alpha^\pi p_{\alpha,\beta}(f)\, d\pi^\delta_\alpha \wedge d\pi^\delta_\beta \, d\beta \, d\alpha ,
\]
with the obvious action on $T$ given by
\[
T(\omega^\delta) \defl \int_0^\pi \int_\alpha^\pi T(p_{\alpha,\beta}, \pi_\alpha^\delta, \pi_\beta^\delta) \, d\beta \, d\alpha ,
\]
where $\pi_x^\delta : L^\infty([0,2\pi)) \to L^\infty([0,2\pi))$ is defined by $\pi_x^\delta = \pi_x \circ A_\delta$ for all $x$. It is easy to check that $A_\delta$ maps $E_\varepsilon(\bS^1)$ into $E_\varepsilon(\bS^1)$, $\|A_\delta\| \leq 1$ and $\lim_{\delta \to 0} A^\delta = \id$ uniformly on $E_\varepsilon(\bS^1)$. This allows us to apply the continuity axiom for metric currents together with the dominated convergence theorem to conclude that $\lim_{\delta \to 0} T(\omega^\delta) = T(\omega)$. The key point is that each $\pi_x^\delta$ is well defined and has finite operator norm for all $x$. Indeed,
\[
|\pi_x^\delta(f)| \leq \frac{1}{2\delta}\int_{\alpha-\delta}^{\alpha+\delta} |f_t| \,dt \leq \|f\|_\infty .
\]
Using \eqref{eq:currentrepres}, we can express
\[
T(\omega^\delta) = \int_0^\pi \int_\alpha^\pi \int_S \theta(f) p_{\alpha,\beta}(f) \left\langle \BigWedge_2 d^S_f (\pi^\delta_{\alpha}, \pi^\delta_{\beta}), \tau(f) \right\rangle\,d\cH^2(f) \, d\beta \, d\alpha .
\]
Note that for fixed $\alpha$, $\beta$ and $\delta$, the evaluation $(\pi^\delta_{\alpha}, \pi^\delta_{\beta}) : L^\infty([0,2\pi)) \to \R^2$ is well-defined, linear with operator norm bounded by $\sqrt{2}$. Consequently, the induced map
\[
\BigWedge_2 d^S_f (\pi^\delta_{\alpha}, \pi^\delta_{\beta}) : \BigWedge_2 \operatorname{Tan}^{(n)}(S,f) \to \BigWedge_2 \R^2
\]
is given by
\[
\left \langle \BigWedge_2 d^S_f (\pi^\delta_{\alpha}, \pi^\delta_{\beta}), v \wedge w\right\rangle = v^\delta_\alpha w^\delta_\beta - v^\delta_\beta w^\delta_\alpha
\]
for $\cH^2$-almost every $f \in S$, all $(\alpha, \beta) \in \Delta \defl \{(\alpha,\beta) \in \R^2 : 0 < \alpha < \beta < \pi\}$ and all vectors $v,w\in \operatorname{Tan}^{(n)}(S,f) \subset L^\infty([0,2\pi))$. We abbreviate $v \defl \tau_1$ and $w \defl \tau_2$. By applying Fubini’s theorem, we obtain
\[
T(\omega^\delta) = \int_S \int_\Delta I_\delta(q,f)\,d\cL^2(q)\,d\cH^2(f) ,
\]
where
\[
I_\delta((\alpha,\beta),f) \defl \theta(f) p_{\alpha,\beta}(f) \left(v^\delta_\alpha(f) w^\delta_\beta(f) - v^\delta_\beta(f) w^\delta_\alpha(f)\right) .
\]
We now provide the details for the prerequisites needed to apply Fubini’s theorem. The integrand $\Delta \times S \ni (q,f) \mapsto I_\delta(q,f)$ is $\cL^2 \otimes \cH^2$-measurable due to the following two facts:
\begin{itemize}
	\item For fixed $q \in \Delta$, the map $f \mapsto I_\delta(q,f)$ is Borel measurable. This follows from the measurability of $v$, $w$, and $\theta$, combined with the continuity of $A_\delta$ and the map $f \mapsto p_q(f)$, as established in Lemma~\ref{lem:smoothness}.
	\item For fixed $f \in S$, the map $q \mapsto I_\delta(q,f)$ is continuous. Indeed, the function $q \mapsto p_q(f)$ is continuous, and the maps $\alpha \mapsto v^\delta_\alpha(f)$ and $\alpha \mapsto w^\delta_\alpha(f)$ are Lipschitz continuous, since
	\[
	\left|v_\alpha^\delta(f) - v_\beta^\delta(f)\right| \leq \frac{C_2}{\delta}|\beta - \alpha|,
	\]
	as shown, for example, in the proof of \cite[Theorem~4.7]{Z1}. Note that $C_2 \geq \|v\|_\infty, \|w\|_\infty$ is some universal upper bound.
\end{itemize}
Additionally, for all $\delta$ we have the uniform bound $I_\delta(q,f) \leq \theta(f) M(\varepsilon)$, since $p_q(f)$ is uniformly bounded on $\Delta \times E_\varepsilon(\bS^1)$ by Lemma~\ref{lem:pfunction}(4). Moreover,
\[
\left|\left \langle \BigWedge_2 (\pi^\delta_{\alpha},\pi^\delta_{\beta}), \tau(f) \right\rangle\right| \leq \|(\pi^\delta_{\alpha},\pi^\delta_{\beta})\|^2\mu^{\rm bh}(\tau(f)) \leq 2,
\]
for $\cH^2$-almost every $f \in S$. This estimate follows from the normalization $\mu^{\mathrm{bh}}(\tau(f)) = 1$ and the operator norm bound $\|(\pi^\delta_{\alpha}, \pi^\delta_{\beta})\| \leq \sqrt{2}$.

For $\delta \to 0$, the dominated convergence theorem implies the limit identity 
\begin{align*}
T(\omega) & = \int_S \int_0^\pi \int_\alpha^\pi \theta(f) p_{\alpha,\beta}(f) (v_\alpha(f) w_\beta(f) - v_\beta(f) w_\alpha(f)) \, d\beta \, d\alpha \,d\cH^2(f) .
\end{align*}
We provide the details: On the left-hand side, the continuity of $T$ implies that $T(\omega^\delta) \to T(\omega)$ as observed previously. On the right-hand side, for each $\delta$, the function
\[
S \ni f \mapsto \int_\Delta I_\delta(p,f) \,d\cL^2(q)
\]
is $\cH^2$-measurable by Fubini’s theorem. Moreover,  the pointwise $\cH^2$-almost everywhere limit as $\delta \to 0$
\[
S \ni f \mapsto \int_0^\pi \int_\alpha^\pi \theta(f) p_{\alpha,\beta}(f) (v_\alpha(f) w_\beta(f) - v_\beta(f) w_\alpha(f)) \, d\beta \, d\alpha ,
\]
exists and is therefore also $\cH^2$-measurable. The existence of this limit follows from the uniform bound $I_\delta(q,f) \leq 2\theta(f) M(\varepsilon)$ for all $f \in S \subset E_\varepsilon(\bS^1)$, and from the pointwise convergence
\[
\lim_{\delta \to 0} v_\alpha^\delta(f) w_\beta^\delta(f) - v_\beta^\delta(f) w_\alpha^\delta(f) = v_\alpha(f) w_\beta(f) - v_\beta(f) w_\alpha(f)
\]
whenever $\alpha$ and $\beta$ are density points of $v(f)$ and $w(f)$.
\end{proof}

The Banach space
\[
L^\infty_{\rm{ap}}(\R) \defl \{f \in L^\infty(\R) : f_{\alpha + \pi} = -f_\alpha \text{ for almost every } \alpha\}
\]
is considered as a subspace of $L^\infty([0,2\pi))$ by restricting functions to $[0,2\pi)$, and is weak$\ast$ closed. 
The obvious isometric embedding realizes $E(\bS^1)$ in the affine subspace
\[
E(\bS^1) \subset \tfrac{\pi}{2} + L^\infty_{\rm{ap}}(\R) \subset L^\infty([0,2\pi)) .
\]
If $S \subset E(\bS^1)$ is as in the lemma above, with orienting vector fields $\tau_1,\tau_2 : S \to L^\infty([0,2\pi))$, then $\tau_1$ and $\tau_2$ take values in $L^\infty_{\rm{ap}}(\R)$. Note that they arise as weak$\ast$ derivatives of parametrizations of $S$. We identify $L^\infty_{\rm{ap}}(\R)$ with $L^\infty([0,\pi))$ when convenient.

With this lemma at hand, the pointwise definition
\[
\omega_f(v \wedge w) \defl \int_0^\pi \int_\alpha^\pi p_{\alpha,\beta}(f) (v_{\alpha} w_{\beta} - v_{\beta} w_{\alpha}) \, d\beta \, d\alpha ,
\]
for $f \in E(\bS^1)\setminus \bS^1$ and $v,w \in L^\infty([0,\pi))$ is useful. The pointwise comass $\|\omega_f\|_{\rm ir}$ of $\omega$ at $f \in E(\bS^1)\setminus \bS^1$ is defined as the infimum over all constants $M \geq 0$ such that
\begin{equation}
\label{eq:comassdef}
|\omega_f(v \wedge w)| \leq M \mu^{\rm ir}(v \wedge w) 
\end{equation}
holds for all $v,w \in L^\infty([0,\pi))$. It is clear from this definition that $\|\omega_f\|_{\rm ir}$ depends only on the plane spanned by $v$ and $w$. Building on Lemma~\ref{lem:actiononpaths} and the definitions above, we obtain the following characterizations of mass and comass.

Let $T \in \cR_2(E_\varepsilon(\bS^1))$ be represented as $T = \curr{S,\theta,\tau} \in \cR_2(L^\infty([0,2\pi)))$. Then, by Lemma~\ref{lem:masscor}, it follows that
\[
\bM_{\rm ir}(T) = \int_S \theta(f) \lambda^{\rm ir}_{\operatorname{Tan}^{(2)}(S,f)}\, d\cH^2(f) = \int_S \theta(f) \mu^{\rm ir}(\tau_1(f) \wedge \tau_2(f))\, d\cH^2(f) ,
\]
where
\[
\lambda^{\rm ir}_{\operatorname{Tan}^{(2)}(S,f)} = \frac{\mu^{\rm ir}(\tau_1(f) \wedge \tau_2(f))}{\mu^{\rm bh}(\tau_1(f) \wedge \tau_2(f))}
\]
for $\cH^2$-almost every $f \in S$.

\begin{prop}
	\label{prop:massbound}
Let $T \in \cR_2(E_\varepsilon(\bS^1))$ for some $\varepsilon \in (0,\frac{\pi}{2})$ with representation $\curr{S,\theta,\tau}$ in $\cR_2(L^\infty([0,2\pi))$. Then
\[
|T(\omega)| \leq \bM_{\rm ir}(T)\sup_{f \in \spt(T)}\|\omega_f\|_{\rm ir} .
\]
Moreover,
\begin{equation}
\label{eq:omegamass}
\|\omega_f\|_{\rm ir} = \sup\left\{\omega_f(v\wedge w) : v,w \in L^\infty([0,\pi)), \|v^2 + w^2\|_\infty \leq 1\right\}
\end{equation}
for all $f \in E(\bS^1)\setminus \bS^1$.
\end{prop}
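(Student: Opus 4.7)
The plan is to establish the mass bound directly from Lemma~\ref{lem:actiononpaths} and then reduce the comass identity to a two-dimensional Löwner--John computation on each tangent plane.

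For the mass bound, Lemma~\ref{lem:actiononpaths} rewrites
\[
T(\omega) = \int_S \theta(f)\,\omega_f(\tau_1(f)\wedge \tau_2(f))\,d\cH^2(f)
\]
with $\omega_f$ in the pointwise sense introduced just above the proposition. The very definition of the pointwise comass gives $|\omega_f(\tau_1(f)\wedge\tau_2(f))| \leq \|\omega_f\|_{\rm ir}\,\mu^{\rm ir}(\tau_1(f)\wedge\tau_2(f))$ for $\cH^2$-almost every $f \in S \subset \spt(T)$. Substituting this bound into the integral and using the representation of $\bM_{\rm ir}(T)$ from Corollary~\ref{cor:masscor} yields the first claim.

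For the identity \eqref{eq:omegamass}, I would first reduce it to a plane-by-plane normalization. Fix $f \in E(S^1)\setminus S^1$. If $v,w \in L^\infty([0,\pi))$ span a $2$-plane $\Pi$, then for any other pair $(v',w') \in \Pi^2$ one has $v'\wedge w' = \lambda(v\wedge w)$ for some $\lambda\in\R$; by bilinearity and antisymmetry, both $\omega_f$ and $\mu^{\rm ir}$ restricted to $\Pi\times\Pi$ scale by $\lambda$ (respectively $|\lambda|$). Hence the ratio $\rho(\Pi) \defl |\omega_f(v\wedge w)|/\mu^{\rm ir}(v\wedge w)$ is independent of the basis, and $\|\omega_f\|_{\rm ir} = \sup_\Pi \rho(\Pi)$ (linearly dependent pairs contribute $0$ to both sides of \eqref{eq:omegamass} and may be ignored). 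Since the constraint $\|v^2+w^2\|_\infty \leq 1$ is preserved under $(v,w)\mapsto(-v,w)$, which flips the sign of $\omega_f(v\wedge w)$, the sup over signed values on the right of \eqref{eq:omegamass} equals the sup over absolute values. The claim therefore reduces to showing, for every $2$-plane $\Pi \subset L^\infty([0,\pi))$,
\[
\sup\bigl\{\mu^{\rm ir}(v'\wedge w') : (v',w')\in\Pi^2,\ \|(v')^2+(w')^2\|_\infty \leq 1\bigr\} = 1.
\]

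This normalization is the heart of the proof, and I expect its derivation via polar duality of Löwner--John ellipses to be the main obstacle. Pick a basis $(v,w)$ of $\Pi$ and identify $\Pi\cong\R^2$ via $I(x,y)\defl xv+yw$. The pullback norm is $s(x,y) \defl \|xv+yw\|_\infty$, whose polar unit ball $\B_s^\circ$ coincides with the closed convex symmetric hull of the essential image of $\alpha\mapsto(v_\alpha,w_\alpha)$. Writing $v' = av+bw$, $w' = cv+dw$ and $M \defl \bigl(\begin{smallmatrix}a & b\\ c & d\end{smallmatrix}\bigr)$, one checks $(v')^2_\alpha+(w')^2_\alpha = \|M(v_\alpha,w_\alpha)\|^2$, so the constraint $\|(v')^2+(w')^2\|_\infty \leq 1$ becomes $M(\B_s^\circ) \subset D$ for $D$ the closed Euclidean unit disk, while $\mu^{\rm ir}(v'\wedge w') = |\det M|\,\mu^{\rm ir}(v\wedge w)$. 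For invertible $M$ the constraint rearranges to $\B_s^\circ \subset M^{-1}(D)$, an ellipse of area $\pi/|\det M|$; hence $|\det M|$ is maximal precisely when $M^{-1}(D)$ is the minimal-area ellipse enclosing $\B_s^\circ$, i.e.\ the circumscribed Löwner--John ellipse $\mathcal E^\ast$ of $\B_s^\circ$. Planar polar duality gives $\mathcal E^\ast = (\mathcal E_s)^\circ$ for $\mathcal E_s$ the inscribed Löwner--John ellipse of $\B_s$, together with the identity $\cL^2(\mathcal E^\ast)\,\cL^2(\mathcal E_s) = \pi^2$. Combining $\mu^{\rm ir}_\Pi(\mathcal E_\Pi) = \pi$ with the pullback relation $I^\ast\mu^{\rm ir}_\Pi = \mu^{\rm ir}(v\wedge w)\,\cL^2$ forces $\cL^2(\mathcal E_s) = \pi/\mu^{\rm ir}(v\wedge w)$, so $\max|\det M| = \pi/\cL^2(\mathcal E^\ast) = 1/\mu^{\rm ir}(v\wedge w)$. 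Multiplying by $\mu^{\rm ir}(v\wedge w)$ produces $\max\mu^{\rm ir}(v'\wedge w') = 1$ and completes the proof. The subtlety is entirely bookkeeping: the three normalizations ($\mu^{\rm ir}_V(\mathcal E_V)=\pi$, the polar identity $\cL^2(E)\cL^2(E^\circ)=\pi^2$ for ellipses, and the transport of $\mu^{\rm ir}_\Pi$ under $I$) must line up so that the factors of $\pi$ cancel exactly.
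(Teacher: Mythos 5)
Your proposal is correct. The first half (the mass bound via Lemma~\ref{lem:actiononpaths}, the pointwise comass inequality, and Corollary~\ref{cor:masscor}) and the reduction of \eqref{eq:omegamass} to the plane-by-plane normalization $\sup\{\mu^{\rm ir}(v'\wedge w') : v',w'\in\Pi,\ \|(v')^2+(w')^2\|_\infty\leq 1\}=1$ are the same as in the paper, but you prove that normalization by a genuinely different, dual route. The paper stays inside the plane $V$: the constraint $\|v^2+w^2\|_\infty\leq 1$ says exactly that the ellipse traced by $t\mapsto\cos(t)v+\sin(t)w$ lies in $\B_V$, so $\mu^{\rm ir}_V(v\wedge w)\leq\frac{1}{\pi}\mu^{\rm ir}_V(\mathcal E_V)=1$ by maximality of the inscribed L\"owner--John ellipse, and the value $1$ is attained by an orthonormal basis for the Euclidean norm $e$ whose unit ball is $\mathcal E_V$. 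You instead pass to coordinates, read the constraint as $M(\B_s^{\circ})\subset D$, and optimize $|\det M|$ via the circumscribed ellipse of the polar body, invoking the John--L\"owner polarity $\mathcal E^{\ast}=(\mathcal E_s)^{\circ}$ and the identity $\cL^2(E)\,\cL^2(E^{\circ})=\pi^2$ for centered ellipses. The two constraints are polar to one another ($M(\B_s^{\circ})\subset D$ if and only if $M^{T}(D)\subset\B_s$, which is precisely the paper's ``ellipse inside $\B_V$'' condition), so your argument is equivalent but carries extra baggage: you rely on the polarity of the two L\"owner--John ellipses for symmetric planar bodies (a true classical fact, which follows from bipolarity together with the very volume-product identity you use, but which should be justified or cited) and on the implicit fact that the isometry $I$ carries $\mathcal E_s$ onto $\mathcal E_\Pi$. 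What your version buys is an explicit description of the optimizing pair (the $M$ taking $\mathcal E^{\ast}$ to $D$) and transparent bookkeeping of the normalizations; the paper's primal argument is shorter and avoids polar duality entirely.
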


\begin{proof}
Due to Lemma~\ref{lem:actiononpaths}, we have
\begin{align*}
|T(\omega)| & = \int_S \theta(f) \omega_f(\tau_1(f) \wedge \tau_2(f))\,d\cH^2(f) \\
 & \leq \sup_{f \in \spt(T)}\|\omega_f\|_{\rm ir} \int_S \theta(f)\mu^{\rm ir}(\tau_1(f) \wedge \tau_2(f))\,d\cH^2(f) \\
 & = \sup_{f \in \spt(T)}\|\omega_f\|_{\rm ir} \int_S \theta(f)\lambda^{\rm ir}_{\operatorname{Tan}^{(n)}(S,f)}\,d\cH^2(f) \\
 & = \sup_{f \in \spt(T)}\|\omega_f\|_{\rm ir}\bM_{\rm ir}(T) .
\end{align*}
This establishes the first part.

For the second part, let $V \subset L^\infty([0,\pi))$ be a two-dimensional subspace, $\mathcal{E}_V$ the L\"owner–John ellipse of the closed unit ball $\B_V$ of $(V, \|\cdot\|\infty)$, and $e : V \to [0,\infty)$ the Euclidean norm with unit ball $\mathcal{E}_V$. Equation \eqref{eq:omegamass} follows directly from the inequality
\begin{equation}
	\label{eq:irnormestimate}
\mu_{\|\cdot\|_\infty}^{\rm ir}(v \wedge w) \leq \|v^2 + w^2\|_\infty
\end{equation}
which holds for all $v,w \in V$, with equality if and only if $e(v) = e(w)$ and $v,w$ are orthogonal with respect to $e$. The right-hand side above can equivalently be expressed as
\[
\|v^2 + w^2\|_\infty = \sup_t \|\cos(t)v + \sin(t)w\|^2_\infty.
\]

Let $v,w \in V$ be linearly independent with $\|v^2 + w^2\|_\infty = 1$. This implies that for all $t$, $\|\cos(t)v + \sin(t)w\|^2_\infty \leq 1$, so the path $t \mapsto \cos(t)v + \sin(t)w$ traces the boundary of an ellipse $E$ contained in $\B_V$. The area of $E$ (or any origin-symmetric ellipse in the Euclidean space $(V,e)$) is given by 
\[
\pi\max_{x,y \in E} \mu_e(x \wedge y) = \pi\max_{x,y \in E} \mu_{\|\cdot\|_\infty}^{\rm ir}(x \wedge y) .
\]
Since $\mathcal E_V$ is the ellipse of maximal area in $\B_V$, it follows that
\[
\mu^{\rm ir}_{\|\cdot\|_\infty}(v \wedge w) \leq \frac{1}{\pi} \mu_{\|\cdot\|_\infty}^{\rm ir}(E) \leq \frac{1}{\pi} \mu_{\|\cdot\|_\infty}^{\rm ir}(\mathcal E_V) = 1 = \|v^2 + w^2\|_\infty.
\]
Since $\mathcal E_V$ is uniquely determined, equality can occur only if the curve $t \mapsto \cos(t)v + \sin(t)w$ traces the boundary of $\mathcal{E}_V$. Consequently, $e(v) = e(w) = 1$, and evaluating at $t = \frac{\pi}{4}$ yields $1 = e(\frac{1}{\sqrt{2}}(v + w))$. By the standard polarization identities, this implies that $v$ and $w$ form an orthonormal basis with respect to $e$.

If $v$ and $w$ form an orthonormal basis of $V$ with respect to $e$, then
\begin{align*}
1 & = e(\cos(t)v + \sin(t)w)^2 \geq \|\cos(t)v + \sin(t)w\|^2_\infty
\end{align*}
holds for all $t \in \R$. This implies 
\[
\|v^2 + w^2\|_\infty \leq 1 = e(v)e(w) = \mu_{\|\cdot\|_\infty}^{\rm ir}(v \wedge w) .
\]
This establishes inequality \eqref{eq:irnormestimate}, and consequently also \eqref{eq:omegamass}.
\end{proof}

We can view $\omega_f$ for $f \in E(\bS^1)\setminus \bS^1$ as an operator on plane paths. More precisely, we consider the path spaces
\begin{align*}
L^\infty_{\rm ap}(\R,\R^2) & \defl \left\{(\gamma_1,\gamma_2) : \gamma_1,\gamma_2 \in L_{\rm ap}^\infty(\R)\right\} , \\
B^\infty_{\rm ap}(\R,\R^2) & \defl \left\{\gamma \in L^\infty_{\rm ap}(\R,\R^2) : \|\gamma\|_\infty \defl \|\gamma_1^2 + \gamma_2^2\|_\infty \leq 1 \right\} .
\end{align*}
The action of $\omega_f$ on $\gamma \in L^\infty_{\rm ap}(\R,\R^2)$ is defined by
\begin{equation}
	\label{eq:pathaction}
\omega_f(\gamma) \defl \int_0^\pi \int_\alpha^\pi p_{\alpha,\beta}(f) \, \gamma(\alpha) \times \gamma(\beta) \, d\beta \, d\alpha ,
\end{equation}
where $v \times w = v_1w_2 - v_2w_1$ is the signed area of the parallelogram spanned by $v,w \in \R^2$. With this notation, it is understood that the coordinate functions of $\gamma$ represent vectors in $L^\infty([0,\pi))$. The above proposition shows that $\|\omega_f\|_{\rm ir}$ can be expressed as
\begin{equation}
\label{eq:curveaction}
\|\omega_f\|_{\rm ir} = \sup_{\gamma \in B^\infty_{\rm ap}(\R,\R^2)} \omega_f(\gamma)
\end{equation}
for $f \in E(\bS^1)\setminus \bS^1$.

We conclude this subsection by justifying that the action of $\omega_f$ on $\gamma \in L^\infty_{\rm ap}(\R,\R^2)$ does not depend on the choice of base point in $\bS^1$ used to define $\omega_f$. By symmetry,
\begin{align*}
0 & = \int_0^{\pi} \int_0^{\pi} p_{\alpha,\beta}(f) \, \gamma(\alpha) \times \gamma(\beta) \, d\beta \, d\alpha,
\end{align*}
since $\gamma(\alpha) \times \gamma(\beta) = -\gamma(\beta) \times \gamma(\alpha)$ and $p_{\alpha,\beta}(f) = p_{\beta,\alpha}(f)$ by Lemma~\ref{lem:pfunction}. Thus,
\begin{align*}
\omega_f(\gamma) & = -\int_0^{\pi} \int_0^{\alpha} p_{\alpha,\beta}(f) \, \gamma(\alpha) \times \gamma(\beta) \, d\beta \, d\alpha \\
 & = \int_0^{\pi} \int_\pi^{\alpha + \pi} p_{\alpha,\beta}(f) \, \gamma(\alpha) \times \gamma(\beta) \, d\beta \, d\alpha,
\end{align*}
because $p_{\alpha,\beta + \pi}(f) = p_{\alpha,\beta}(f)$ and $\gamma(\alpha) \times \gamma(\beta + \pi) = - \gamma(\alpha) \times \gamma(\beta)$. Hence,
\begin{align}
	\nonumber
\omega_f(\gamma) & = \frac{1}{2}\int_0^{\pi} \int_\alpha^{\alpha + \pi} p_{\alpha,\beta}(f) \, \gamma(\alpha) \times \gamma(\beta) \, d\beta \, d\alpha \\
	\label{eq:indepbasepoint}
 & = \frac{1}{4}\int_0^{2\pi} \int_\alpha^{\alpha + \pi} p_{\alpha,\beta}(f) \, \gamma(\alpha) \times \gamma(\beta) \, d\beta \, d\alpha,
\end{align}
because $p_{\alpha + \pi, \beta + \pi}(f) = p_{\alpha,\beta}(f)$ and $\gamma(\alpha + \pi) \times \gamma(\beta + \pi) = \gamma(\alpha) \times \gamma(\beta)$. The final expression is manifestly independent of the choice of base point in $\bS^1$.

\subsection{Coefficients of product type}
\label{sec:prodtype}

In this subsection it is assumed that the coefficient function $p : \R \times \R \to \R$ are of product type. More precisely, we assume that
\begin{enumerate}
	\item $p_{\alpha,\beta} = p_\alpha p_\beta$,
	\item $p : \R \to \R$ is locally integrable and $\pi$-periodic,
	\item $p_\alpha > 0$ for almost every $\alpha$.
\end{enumerate}
Note that the coefficients $p_{\alpha,\beta}(f)$ are of this type in case $f \in \bS^2_+\setminus \bS^1$ by Lemma~\ref{lem:geometriccoeff} and Lemma~\ref{lem:pfunction}. These coefficients act on paths $\gamma \in L^\infty_{\rm ap}(\R,\R^2)$ by
\begin{equation}
	\label{eq:omegadef}
\omega_p(\gamma) \defl \int_0^\pi \int_\alpha^\pi p_{\alpha,\beta} \, \gamma(\alpha) \times \gamma(\beta) \, d\beta \, d\alpha .
\end{equation}
By the planar isoperimetric inequality, there exists a unique maximizer $\gamma \in B^\infty_{\mathrm{ap}}(\R, \R^2)$ of $\omega_p$, up to rotations of $\R^2$.

\begin{lem}
	\label{lem:prodtype}
Assume that $p_{\alpha,\beta} = p_\alpha p_\beta$ is as above, and let $\nu : \R \to \R$ be the homeomorphism satisfying $\nu(0) = 0$ and
\[
\nu'(\alpha) = p_\alpha \left(\frac{1}{2\pi}\int_0^{2\pi}p_\beta\,d\beta \right)^{-1}
\]
for almost every $\alpha$. Define
\[
\gamma(\alpha) \defl e^{i\nu(\alpha)} \quad \text{and} \quad \sigma (\alpha) \defl \frac{1}{2}\int_\alpha^{\alpha + \pi} p_\beta \gamma(\beta)\, d\beta .
\]
It follows that $\gamma \in B^\infty_{\mathrm{ap}}(\R, \R^2)$ and
\[
\sup_{\delta \in B^\infty_{\rm ap}(\R,\R^2)} \omega_p(\delta) = \omega_p(\gamma) =  \operatorname{Area}(\sigma|_{[0,2\pi]}) = \frac{1}{4\pi} \left(\int_0^{2\pi}p_\alpha\,d\alpha\right)^2 .
\]
Moreover, any maximizer $\delta \in B^\infty_{\mathrm{ap}}(\R, \R^2)$ of $\omega_p$ is of the form $\alpha \mapsto e^{i\nu(\alpha) + ic}$ for some $c \in \R$ and satisfies $|\delta'(\alpha)| = \nu'(\alpha)$ for almost every $\alpha$.
\end{lem}

\begin{proof}
For an antipodal plane path $\gamma \in B^\infty_{\rm ap}(\R,\R^2)$, define $\sigma$ as in the statement. Note that $\sigma \in L^\infty_{\rm ap}(\R,\R^2)$ due to the symmetries of $p$ and $\gamma$. Moreover, $\sigma'(\alpha) = -p_\alpha\gamma(\alpha)$, so that $|\sigma'(\alpha)| \leq p_\alpha$ for almost every $\alpha$. By the properties of $p$ and $\gamma$, we have
\begin{align*}
\omega_p(\gamma) & = \int_0^\pi \int_\alpha^\pi p_{\alpha,\beta} \gamma(\alpha) \times \gamma(\beta)\,d\beta\, d\alpha \\
& = \frac{1}{2} \int_0^\pi \int_\alpha^{\alpha + \pi} p_{\alpha} \gamma(\alpha) \times p_{\beta}\gamma(\beta)\,d\beta\, d\alpha \\
& = \int_0^\pi p_{\alpha} \gamma(\alpha) \times \sigma(\alpha) \,d\alpha \\
& = \int_0^\pi \sigma(\alpha) \times \sigma'(\alpha) \,d\alpha \\
& = \frac{1}{2}\int_0^{2\pi} \sigma(\alpha) \times \sigma'(\alpha) \,d\alpha .
\end{align*}
This is the signed area $\operatorname{Area}(\sigma|_{[0,2\pi]})$ enclosed by $\sigma|_{[0,2\pi]}$. By the isoperimetric inequality for plane curves, it follows that
\begin{equation}
\label{eq:producttype}
\operatorname{Area}(\sigma|_{[0,2\pi]}) \leq \frac{1}{4\pi} \operatorname{Length}(\sigma|_{[0,2\pi]})^2 \leq \frac{1}{4\pi} \left(\int_0^{2\pi}p_\alpha\,d\alpha\right)^2 .
\end{equation}

Assume now that $\sigma$ achieves equality. If $|\gamma(\alpha)| < 1$ on a set of positive measure, then the inequality is strict, since $L(\sigma|_{[0,2\pi]}) < \int_0^{2\pi}p_\alpha\,d\alpha$ due to $p_\alpha > 0$ almost everywhere. Hence, equality in \eqref{eq:producttype} holds if and only if $|\gamma(\alpha)| = 1$ almost everywhere and $\sigma$ is a counterclockwise pa\-ram\-e\-trization of the circle around the origin with radius $r \defl \frac{1}{2\pi}\int_0^{2\pi}p_\alpha\,d\alpha > 0$. In this situation, $|\sigma'(\alpha)| = p_\alpha = r\nu'(\alpha)$ for almost every $\alpha$, and it follows that $\sigma(\alpha) = r e^{i\nu(\alpha) + ic}$ for some $c \in \R$. This implies
\[
-p_\alpha \gamma(\alpha) = \sigma'(\alpha) = r i \nu'(\alpha)e^{i\nu(\alpha) + ic}
\] 
and hence
\[
\gamma(\alpha) = -ie^{i\nu(\alpha) + ic} = e^{i\nu(\alpha) + i(c -\frac{\pi}{2})}
\]
for almost every $\alpha$. In particular,
\[
|\gamma'(\alpha)| = \nu'(\alpha) = p_\alpha r^{-1} .
\]
Thus, $\gamma$ is a counterclockwise parametrization of the unit circle with speed $\nu'(\alpha)$. It is now straightforward to verify that any path of the form $\alpha \mapsto e^{i\nu(\alpha) + ic}$ achieves equality in \eqref{eq:producttype}.
\end{proof}

Such a product structure is present for the coefficients induced by $h \in \bS^2_+\setminus \bS^1$ as established in Lemma~\ref{lem:geometriccoeff}. In fact, one may write
\[
p_{\alpha,\beta}(h) = \frac{\sin(d(h))^2}{\sin(h_\alpha)^2\sin(h_\beta)^2} = \frac{\sin(d(h))}{\sin(h_\alpha)^2} \frac{\sin(d(h))}{\sin(h_\beta)^2} \defr p_\alpha(h) p_\beta(h) .
\]
By Lemma~\ref{lem:tangentsphere}, we have $1 = \frac{1}{2\pi} \int_0^{2\pi}p_\alpha(h)\,d\alpha$. It follows that a maximizer $\gamma \in B^\infty_{\mathrm{ap}}(\R, \R^2)$ for $\omega_h$, which is unique up to rotations of $\R^2$, parametrizes the unit circle with $|\gamma'(\alpha)| = p_\alpha(h)$, and
\begin{equation}
\label{eq:spherecalibration}
\omega_h(\gamma) = \pi .
\end{equation}
The corresponding $\sigma$ likewise parametrizes a unit circle.

\section{Variational analysis}

\subsection{Structure of maximizing paths}
\label{sec:structure}
Throughout this subsection, we assume that the measurable coefficient function $p : \R \times \R \to \R$ satisfies the following conditions:
\begin{enumerate}
	\item $p_{\alpha,\beta} > 0$ almost everywhere,
	\item $p_{\alpha,\beta} = p_{\beta,\alpha}$,
	\item $p_{\alpha,\beta}$ is $\pi$-periodic in both arguments,
	\item $p_{\alpha,\beta}$ is (essentially) uniformly bounded.
\end{enumerate}
Conditions (2),(3) and (4) are satisfied by $p_{\alpha,\beta}(f)$ for all $f \in E(\bS^1)\setminus \bS^1$ by Lemma~\ref{lem:pfunction}. By definition, condition (1) is also satisfied if $f \in E^+(\bS^1)$. As in \eqref{eq:omegadef}, these coefficients define an action $\omega_p$ on $L^\infty_{\rm ap}(\R,\R^2)$. This action is well-defined since the integrand is measurable and uniformly bounded. To establish the existence of maximizing paths for $\omega_p$, we first prove that this action is weak$\ast$ continuous by using the duality $L^\infty([0,\pi))=L^1([0,\pi))^\ast$.

\begin{lem}
	\label{lem:weakstarlem}
Let $p$ and $\omega_p$ as above, and let $(\gamma_n)$ be a sequence in $L^\infty([0,\pi),\R^2)$ that converges with respect to the weak$\ast$ topology to $\gamma$, that is, the corresponding coordinate functions converge, then
\[
\lim_{n\to\infty}\omega_p(\gamma_n) = \omega_p(\gamma) .
\]
\end{lem}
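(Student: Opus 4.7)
The plan is to reduce $\omega_p$ to a combination of bilinear forms of the type $B(f,g) \defl \int_0^\pi\int_0^\pi K(\alpha,\beta) f(\alpha) g(\beta)\, d\beta\, d\alpha$ with bounded measurable kernel $K$, and to establish joint weak-star continuity of $B$ on bounded sets. Expanding the cross product in \eqref{eq:omegadef} I would write
\[
\omega_p(\gamma) = \int_0^\pi \int_0^\pi K(\alpha,\beta)\bigl[\gamma_1(\alpha)\gamma_2(\beta) - \gamma_2(\alpha)\gamma_1(\beta)\bigr]\, d\beta\, d\alpha
\]
with $K(\alpha,\beta) \defl p_{\alpha,\beta}\chi_{\{\alpha<\beta\}} \in L^\infty([0,\pi)^2)$ by assumption (4). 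Since weak-star convergence of $\gamma_n$ to $\gamma$ in $L^\infty([0,\pi),\R^2)$ amounts to coordinate-wise weak-star convergence, it suffices to prove $B(f_n,g_n) \to B(f,g)$ whenever $f_n,g_n$ converge weak-star to $f,g$ in $L^\infty([0,\pi))$.

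The separability of $L^1([0,\pi))$ implies that weak-star convergent sequences in its dual are uniformly norm-bounded, so I may fix $M$ with $\|f_n\|_\infty, \|g_n\|_\infty \leq M$. I then split
\[
B(f_n,g_n) - B(f,g) = B(f, g_n - g) + B(f_n - f, g_n)
\]
and handle the two summands separately. For the first, Fubini gives $B(f, g_n - g) = \int_0^\pi (g_n - g)(\beta)\, h_f(\beta)\, d\beta$ with $h_f(\beta) \defl \int_0^\pi K(\alpha,\beta) f(\alpha)\, d\alpha$ lying in $L^\infty([0,\pi)) \subset L^1([0,\pi))$, so it tends to zero by the weak-star convergence of $g_n$.

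For the second summand I introduce $\phi_n(\alpha) \defl \int_0^\pi K(\alpha,\beta) g_n(\beta)\, d\beta$ and $\phi$ defined analogously with $g$. For each $\alpha$ the slice $K(\alpha,\cdot)$ belongs to $L^1$, so $\phi_n(\alpha) \to \phi(\alpha)$ pointwise, and $|\phi_n(\alpha)| \leq \pi\|K\|_\infty M$ provides a uniform bound. Dominated convergence then yields $\phi_n \to \phi$ in $L^1$, and splitting once more,
\[
B(f_n - f, g_n) = \int_0^\pi (f_n - f)\phi\, d\alpha + \int_0^\pi (f_n - f)(\phi_n - \phi)\, d\alpha,
\]
the first integral vanishes in the limit by the weak-star convergence of $f_n$ (since $\phi \in L^1$), while the second is bounded by $2M\|\phi_n - \phi\|_1 \to 0$.

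No substantial obstacle arises: assumption (4) is precisely what places $K$ in $L^\infty$ and hence rules out any singularity of the integrand, while the symmetry and periodicity properties (2)--(3) play no role in the continuity argument. The only subtle step is the standard passage from separate to joint weak-star continuity, which is why the uniform bound on $\|g_n\|_\infty$ and dominated convergence for the auxiliary functions $\phi_n$ both have to be invoked.
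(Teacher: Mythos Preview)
Your proof is correct and follows essentially the same approach as the paper: reduce to a bilinear form with bounded kernel, split via bilinearity, and use that the partial integral against the kernel converges pointwise by weak-star convergence together with dominated (bounded) convergence. The only cosmetic differences are that the paper works directly with the $\R^2$-valued bilinear form $B(\gamma,\delta)=\int_0^\pi\int_\alpha^\pi p_{\alpha,\beta}\,\gamma(\alpha)\times\delta(\beta)\,d\beta\,d\alpha$ rather than scalarizing, and that it handles the second summand in one step (bounding $|B(\gamma_n,\delta_n-\delta)|$ by $\int|\gamma_n||\mu_n|$ and applying bounded convergence directly) where you perform an additional split.
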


\begin{proof}
We define the bilinear form
\[
B(\gamma,\delta) \defl \int_0^\pi\int_\alpha^\pi p_{\alpha,\beta}\, \gamma(\alpha) \times \delta(\beta)\,d\beta\,d\alpha
\]
for $\gamma,\delta \in L^\infty([0,\pi),\R^2)$, and show that it is (sequentially) weak$\ast$ continuous. So let $(\gamma_{n})$ and $(\delta_{n})$ be sequences that converge to $\gamma$ and $\delta$ respectively in the weak$\ast$ topology. In particular, by the Banach–Steinhaus theorem, both sequences are bounded in $L^\infty([0,\pi),\R^2)$. By the bilinearity of $B$, we have
\[
B(\gamma_n,\delta_n) - B(\gamma,\delta) = B(\gamma_n - \gamma,\delta) + B(\gamma_n,\delta_n-\delta) ,
\]
so it suffices to consider the cases $\gamma = 0$ or $\delta = 0$. First assume that $\delta_n \stackrel{\ast}{\to} 0$. Note that 
\begin{align*}
|B(\gamma_n,\delta_n)| = \left|\int_0^\pi \gamma_n(\alpha) \times \mu_n(\alpha) \,d\alpha\right| \leq \int_0^\pi |\gamma_n(\alpha)||\mu_n(\alpha)| \,d\alpha ,
\end{align*}
where
\[
\mu_n(\alpha) \defl \int_0^\pi \chi_{[\alpha,\pi]}(\beta) p_{\alpha,\beta} \delta_n(\beta)\,d\beta .
\]
Now, $(\mu_n)$ is bounded since both $p_{\alpha,\beta}$ and $(\delta_n)$ are bounded, and for each fixed $\alpha$, $\mu_n(\alpha) \to 0$ because $\delta_n \stackrel{\ast}{\rightarrow} 0$. Since both $(\gamma_n)$ and $(\mu_n)$ are bounded sequences, the bounded convergence theorem implies $B(\gamma_n,\delta_n) \to 0$.

In case $\gamma_n \stackrel{\ast}{\to} 0$, Fubini's theorem implies
\begin{align*}
B(\gamma_n,\delta_n) & = \int_0^\pi \int_0^\beta p_{\alpha,\beta} \gamma_n(\alpha) \times \delta_n(\beta)\,d\alpha\,d\beta \\
 & = - \int_0^\pi \delta_n(\beta) \times \int_0^\beta p_{\alpha,\beta} \gamma_n(\alpha)\,d\alpha\,d\beta .
\end{align*}
The argument then proceeds exactly as in the first case.
\end{proof}

As an application, the direct method in the calculus of variations applies.

\begin{lem}
	\label{lem:compactness}
The functional $\gamma \mapsto \omega_p(\gamma)$ has a maximum in $B^\infty_{\rm ap}(\R,\R^2)$.
\end{lem}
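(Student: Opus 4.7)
The plan is to invoke the direct method in the calculus of variations. The preceding Lemma~\ref{lem:weakstarlem} establishes that $\omega_p$ is continuous with respect to the weak-star topology on $L^\infty([0,\pi),\R^2) = L^1([0,\pi),\R^2)^\ast$, so it suffices to exhibit $B^\infty_{\rm ap}(\R,\R^2)$ (viewed via restriction to $[0,\pi)$) as a weak-star compact subset of this space.

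First I would observe that $B^\infty_{\rm ap}$ is bounded: the constraint $\|\gamma_1^2 + \gamma_2^2\|_\infty \leq 1$ forces $\|\gamma_i\|_\infty \leq 1$ for $i = 1,2$. Hence $B^\infty_{\rm ap}$ sits inside the product of two closed unit balls of $L^\infty([0,\pi))$, which is weak-star compact by the Banach--Alaoglu theorem.

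Next I would verify weak-star closedness of $B^\infty_{\rm ap}$. The pointwise Euclidean disk condition $\gamma_1(\alpha)^2 + \gamma_2(\alpha)^2 \leq 1$ for almost every $\alpha$ is equivalent to the family of linear pointwise conditions
\[
|\cos(\theta)\gamma_1(\alpha) + \sin(\theta)\gamma_2(\alpha)| \leq 1 \quad \text{for all } \theta \in [0,2\pi) \text{ and a.e. } \alpha,
\]
i.e., $\|\cos(\theta)\gamma_1 + \sin(\theta)\gamma_2\|_\infty \leq 1$ for every $\theta$. For each fixed $\theta$ the linear map $(\gamma_1, \gamma_2) \mapsto \cos(\theta)\gamma_1 + \sin(\theta)\gamma_2$ is weak-star continuous from $L^\infty([0,\pi),\R^2)$ into $L^\infty([0,\pi))$, and the closed unit ball of $L^\infty([0,\pi))$ is itself weak-star closed (again by Banach--Alaoglu). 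Consequently $B^\infty_{\rm ap}$ is an intersection of weak-star closed sets and therefore weak-star closed.

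Combining the two steps, $B^\infty_{\rm ap}$ is weak-star compact, and the weak-star continuous functional $\omega_p$ attains its maximum on it. The only conceptual point is the reformulation of the nonlinear pointwise constraint as an intersection of affine constraints, each of which is manifestly weak-star stable; once this is in place, no further obstacle remains and the argument is a standard application of Banach--Alaoglu combined with Lemma~\ref{lem:weakstarlem}.
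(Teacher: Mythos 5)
Your proof is correct and follows essentially the same route as the paper: the direct method via Banach--Alaoglu together with the weak-star continuity of $\omega_p$ from Lemma~\ref{lem:weakstarlem}, with the only work being the weak-star stability of the constraint $\|\gamma_1^2+\gamma_2^2\|_\infty\leq 1$. Where the paper verifies this by showing $(v,w)\mapsto\|v^2+w^2\|_\infty^{1/2}$ is weak-star lower semicontinuous through a dual sup formula with variable weights $g,a,b$, you intersect the weak-star closed linear constraints $\|\cos(\theta)\gamma_1+\sin(\theta)\gamma_2\|_\infty\leq 1$ over all directions $\theta$ (the a.e.\ null sets being handled by a countable dense set of $\theta$ and continuity), which is a minor variant of the same duality idea.
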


\begin{proof}
Note that $B^\infty_{\rm ap}(\R,\R^2)$ can be identified with those elements $\gamma = (\gamma_x,\gamma_y) \in L^\infty([0,\pi),\R^2)$ satisfying $\|\gamma_x^2 + \gamma_y^2\|_\infty \leq 1$. It is clear that $\omega_p$ is bounded on $B^\infty_{\rm ap}(\R,\R^2)$. Denote its supremum by $S$, and let $(\gamma_n)$ be a sequence in $B^\infty_{\rm ap}(\R,\R^2)$ such that $\lim_{n\to\infty}\omega_p(\gamma_n) = S$. Since $L^\infty([0,\pi))$ is the dual space of the separable Banach space $L^1([0,\pi))$, the Banach–Alaoglu theorem applied to the coordinate functions of $(\gamma_n)$ ensures the existence of a subsequence converging weak$\ast$ to some $\gamma \in L^\infty([0,\pi),\R^2)$. Since $\omega_p$ is weak$\ast$ continuous by Lemma~\ref{lem:weakstarlem}, we conclude that $\omega_p(\gamma) = S$.

It remains to verify that $\gamma \in B^\infty_{\rm ap}(\R,\R^2)$. To this end, note that for $v,w \in L^\infty([0,\pi))$, we have
\[
\|v^2 + w^2\|_\infty^{\frac{1}{2}} = 
\sup_{\substack{
		\|g\|_1 \leq 1 \\
		\|a^2 + b^2\|_\infty \leq 1
}} 
\int_0^\pi g(t) \big( a(t) v(t) + b(t) w(t) \big) \, dt.
\]
This identity follows from the Cauchy–Schwarz inequality together with the duality representation $\|f\|_\infty = \sup_{\|g\|_1 \leq 1}\int_0^\pi g(t)f(t)\,dt$ for $f \in L^\infty([0,\pi))$. If $(v_n,w_n) \stackrel{\ast}{\to} (v,w)$ in $L^\infty([0,\pi))^2$ and $g,a,b$ are as above, then $ga$ and $gb$ belong to $L^1([0,\pi))$. It follows that
\begin{align*}
\int_0^\pi g(t)(a(t)v(t) + b(t)w(t))\,dt & = \lim_{n\to\infty}\int_0^\pi g(t)(a(t)v_n(t) + b(t)w_n(t))\,dt \\
 & \leq \limsup_{n\to\infty}\|v_n^2 + w_n^2\|_\infty^\frac{1}{2} .
\end{align*}
Taking the supremum over all such $g,a,b$, we obtain
\[
\|v^2 + w^2\|_\infty^\frac{1}{2} \leq \limsup_{n\to\infty}\|v_n^2 + w_n^2\|_\infty^\frac{1}{2} .
\]
This shows that the limit $\gamma$ lies in $B^\infty_{\rm ap}(\R,\R^2)$, as the approximating sequence $(\gamma_n)$ belongs to this set.
\end{proof}

Using a variation argument, we show that any maximizer $\gamma$ of $\omega_p$ takes values in the unit circle. This relies on the strict positivity of the coefficient function $p_{\alpha,\beta}$.

\begin{lem}
	\label{lem:gammaboundary}
Let $\gamma \in B^\infty_{\rm ap}(\R,\R^2)$ be a maximizer of $\omega_p(\gamma)$. Then $|\gamma(\alpha)| = 1$ for almost every $\alpha$.
\end{lem}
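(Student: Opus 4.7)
The plan is to argue by contradiction: assume $A \defl \{\alpha \in [0,\pi) : |\gamma(\alpha)| < 1\}$ has positive Lebesgue measure and construct a competitor $\gamma + t\eta \in B^\infty_{\rm ap}(\R,\R^2)$ with strictly larger $\omega_p$-value for some small $t > 0$. Shrinking $A$ to a subset $B \subset A$ of positive measure on which $|\gamma(\alpha)| \leq 1-\varepsilon$ for some $\varepsilon > 0$, I restrict attention to variations $\eta \in L^\infty([0,\pi),\R^2)$ that are supported on $B$ and satisfy $|\eta(\alpha)| \leq 1$; then $|\gamma(\alpha) + t\eta(\alpha)| \leq 1-\varepsilon + t \leq 1$ for all $t \in [0,\varepsilon]$, so the perturbed path remains in $B^\infty_{\rm ap}(\R,\R^2)$ once $\eta$ is extended antipodally to $\R$.

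By bilinearity, $\omega_p(\gamma + t\eta) = \omega_p(\gamma) + t\,I_1(\eta) + t^2\,\omega_p(\eta)$. Using $p_{\alpha,\beta} = p_{\beta,\alpha}$, Fubini's theorem, the $\pi$-periodicity of $p$, and the antipodal identity $\gamma(\beta+\pi) = -\gamma(\beta)$, I would rewrite the linear term as
\[
I_1(\eta) = \int_0^\pi \eta(\alpha) \times v(\alpha)\,d\alpha, \qquad v(\alpha) \defl \int_\alpha^{\alpha+\pi} p_{\alpha,\beta}\,\gamma(\beta)\,d\beta.
\]
Now split into two cases. If the set $B' \defl \{\alpha \in B : v(\alpha) \neq 0\}$ has positive measure, choose $\eta(\alpha) \defl v(\alpha)^\perp/|v(\alpha)|$ on $B'$ and $\eta \equiv 0$ elsewhere, where $(a,b)^\perp \defl (-b,a)$. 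Then $\eta(\alpha) \times v(\alpha) = |v(\alpha)|$ on $B'$, so $I_1(\eta) > 0$, and $\omega_p(\gamma + t\eta) > \omega_p(\gamma)$ for sufficiently small $t > 0$. Otherwise $v$ vanishes a.e. on $B$, so $I_1(\eta) = 0$ for every $\eta$ supported in $B$; in that case split $B$ at a threshold $t_0$ into disjoint measurable sets $B_1 = B \cap [0,t_0)$ and $B_2 = B \cap [t_0,\pi)$ of positive measure, and set $\eta(\alpha) \defl (1,0)$ on $B_1$, $\eta(\alpha) \defl (0,1)$ on $B_2$, and $\eta \equiv 0$ elsewhere. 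Then the only contributions to $\omega_p(\eta) = \int_0^\pi\int_\alpha^\pi p_{\alpha,\beta}\,\eta(\alpha)\times\eta(\beta)\,d\beta\,d\alpha$ come from $\alpha \in B_1,\,\beta \in B_2$, giving $\omega_p(\eta) = \int_{B_1}\int_{B_2} p_{\alpha,\beta}\,d\beta\,d\alpha > 0$ thanks to the assumption $p_{\alpha,\beta} > 0$ almost everywhere. Hence $\omega_p(\gamma + t\eta) = \omega_p(\gamma) + t^2\omega_p(\eta) > \omega_p(\gamma)$, contradicting maximality.

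The only subtle step is the case $v \equiv 0$ a.e.\ on $B$, where the first-order variation gives no information and one must extract a strict gain at second order; this is precisely where hypothesis (1), namely the strict positivity of $p_{\alpha,\beta}$, is used in an essential way, and also where it matters that the subset $B$ has genuine ``spread'' in $[0,\pi)$ so that $B_1$ and $B_2$ can be separated in time. Everything else (the constraint $|\gamma + t\eta| \leq 1$, the antipodal extension, and the identification of $I_1$) is routine given the symmetries of $p$ already established in Lemma~\ref{lem:pfunction}.
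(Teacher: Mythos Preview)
Your argument is correct in spirit, but there is a sign slip in Case~1: with your convention $(a,b)^\perp = (-b,a)$ one computes $v^\perp \times v = (-b)b - a\cdot a = -|v|^2$, so $\eta(\alpha)\times v(\alpha) = -|v(\alpha)|$ and hence $I_1(\eta) < 0$, not $> 0$. This is harmless because the same bound $|\gamma + t\eta| \le (1-\varepsilon) + |t|$ shows that $t \in [-\varepsilon,\varepsilon]$ is admissible, so one simply takes small $t < 0$ (or reverses the sign of $\perp$).

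The paper's proof follows the same idea but is more economical. Precisely because two-sided variations are allowed, maximality of $\gamma$ at the interior point $t=0$ forces the linear term $I_1(\eta)$ to vanish for \emph{every} admissible $\eta$ supported in $B$; consequently $v\equiv 0$ a.e.\ on $B$ and your Case~1 is actually vacuous. The paper therefore skips the first-order analysis entirely and records only the second-order condition $\omega_p(\delta) \le 0$. It then tests this against the single variation $\delta(\alpha) = \tfrac{1}{n}e^{i\alpha}\chi_{A_n}(\alpha)$, for which $\delta(\alpha)\times\delta(\beta) = \tfrac{1}{n^2}\sin(\beta-\alpha) > 0$ on the whole triangle $0<\alpha<\beta<\pi$; strict positivity of $p$ then forces $\cL^1(A_n)=0$. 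This choice of $\delta$ avoids both your case split and the need to partition $B$ into time-ordered pieces $B_1$, $B_2$. Your piecewise-constant construction in Case~2 achieves the same strict positivity by a different, and equally valid, mechanism.
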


\begin{proof}
We aim to show that the set $A = \{\alpha \in [0,\pi) : |\gamma(\alpha)| < 1\}$ has measure zero. Let $\delta \in L^\infty_{\rm ap}(\R,\R^2)$ be such that $|\delta(\alpha)| + |\gamma(\alpha)| \leq 1$ for almost every $\alpha \in [0,\pi)$. Then $\gamma + t\delta$ is in $B^\infty_{\rm ap}(\R,\R^2)$ for all $t \in [-1,1]$, and thus
\begin{align*}
0 & \geq \left.\frac{d^2}{d t^2}\right|_{t=0}\omega_f(\gamma + t\delta) \\
& = \left.\frac{d^2}{d t^2}\right|_{t=0} \int_0^\pi \int_\alpha^\pi p_{\alpha,\beta}\, (\gamma + t\delta)(\alpha) \times (\gamma + t\delta)(\beta)\,d\beta\,d\alpha \\
& = 2\int_0^\pi \int_\alpha^\pi p_{\alpha,\beta}\, \delta(\alpha) \times \delta(\beta) \,d\beta\,d\alpha .
\end{align*}
Because $p_{\alpha,\beta} > 0$ almost everywhere, we can vary $\delta$ on the set $A$ to conclude that $A$ has measure zero. For example, for each $n \in \N$, define 
\[
\delta(\alpha) = \frac{1}{n} e^{i\alpha} \quad \text{on} \quad A_n \defl \left\{\alpha \in [0,\pi) : |\gamma(\alpha)| \leq 1 - \frac{1}{n}\right\}
\]
and set $\delta(\alpha) = 0$ on $[0,\pi) \setminus A_n$. Then the maximality of $\gamma$ implies that $\mathcal{L}^1(A_n) = 0$ for all $n$, and hence $\mathcal{L}^1(A) = 0$.
\end{proof}

We can extract further information by considering suitable variations of $\gamma$.

\begin{lem}
	\label{lem:gammaprop}
Let $\gamma \in B^\infty_{\rm ap}(\R,\R^2)$ be a maximizer of $\omega_p$ and define
\[
\mu(\alpha) \defl \int_\alpha^{\alpha+\pi} p_{\alpha,\beta}\,\gamma(\beta)\,d\beta.
\]
Then $\mu \in L^\infty_{\rm ap}(\R,\R^2)$, and for almost every $\alpha$ the vector $\mu(\alpha)$ is orthogonal to $\gamma(\alpha)$ and satisfies $\gamma(\alpha) \times \mu(\alpha) \geq 0$.
\end{lem}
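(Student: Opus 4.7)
The plan is to obtain all three conclusions from the first variation at the maximum $\gamma$, using two families of variations tailored to the constraints $|\gamma|\leq 1$ and antipodality. The heart of the argument is an integration-by-parts identity that repackages the double integral defining $\omega_p$ into a single integral against $\gamma\times\mu$ or $\gamma\cdot\mu$.

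The membership $\mu\in L^\infty_{\rm ap}(\R,\R^2)$ is routine. Boundedness follows from the essential bound on $p_{\alpha,\beta}$, the bound $|\gamma|\leq 1$, and the finite interval of integration. For antipodality I would substitute $\beta=\beta'+\pi$ in $\mu(\alpha+\pi)=\int_{\alpha+\pi}^{\alpha+2\pi}p_{\alpha+\pi,\beta}\gamma(\beta)\,d\beta$ and use $\pi$-periodicity of $p$ in each argument together with $\gamma(\beta'+\pi)=-\gamma(\beta')$ to get $\mu(\alpha+\pi)=-\mu(\alpha)$.

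The next step is to prove the two identities, valid for any $\pi$-periodic $\eta\in L^\infty(\R)$,
\begin{align*}
\int_0^\pi \eta(\alpha)\,\gamma(\alpha)\times\mu(\alpha)\,d\alpha &= \int_0^\pi\!\!\int_\alpha^\pi p_{\alpha,\beta}(\eta(\alpha)+\eta(\beta))\,\gamma(\alpha)\times\gamma(\beta)\,d\beta\,d\alpha, \\
\int_0^\pi \eta(\alpha)\,\gamma(\alpha)\cdot\mu(\alpha)\,d\alpha &= \int_0^\pi\!\!\int_\alpha^\pi p_{\alpha,\beta}(\eta(\alpha)-\eta(\beta))\,\gamma(\alpha)\cdot\gamma(\beta)\,d\beta\,d\alpha.
\end{align*}
To derive these I split $\mu(\alpha)=\int_\alpha^{\alpha+\pi}p_{\alpha,\beta}\gamma(\beta)\,d\beta$ into integrals over $[\alpha,\pi]$ and $[\pi,\alpha+\pi]$, reduce the latter using $\gamma(\beta+\pi)=-\gamma(\beta)$ and $\pi$-periodicity of $p$, and then swap $\alpha\leftrightarrow\beta$ on one of the two resulting double integrals. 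The antisymmetry of $\times$ and the symmetry of $\cdot$ produce the two different sign patterns in the weight factor $\eta(\alpha)\pm\eta(\beta)$.

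For orthogonality I use the rotational variation $\gamma_t(\alpha)\defl R_{t\eta(\alpha)}\gamma(\alpha)$ with $\eta\in L^\infty([0,\pi))$ extended $\pi$-periodically; this lies in $B^\infty_{\rm ap}(\R,\R^2)$ for every $t$ because it preserves both $|\gamma|=1$ and the antipodal relation. The first variation
\[
0=\tfrac{d}{dt}\big|_{t=0}\omega_p(\gamma_t)=\int_0^\pi\!\!\int_\alpha^\pi p_{\alpha,\beta}\big[\eta(\alpha)\,J\gamma(\alpha)\times\gamma(\beta)+\eta(\beta)\,\gamma(\alpha)\times J\gamma(\beta)\big]\,d\beta\,d\alpha
\]
simplifies via $Jv\times w=-v\cdot w$ and $v\times Jw=v\cdot w$ to $\int_0^\pi\!\!\int_\alpha^\pi p_{\alpha,\beta}(\eta(\beta)-\eta(\alpha))\gamma(\alpha)\cdot\gamma(\beta)\,d\beta\,d\alpha$, and the second identity above then forces $\int_0^\pi \eta(\alpha)\gamma(\alpha)\cdot\mu(\alpha)\,d\alpha=0$. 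Since $\eta$ is arbitrary in $L^\infty([0,\pi))$, $\gamma\cdot\mu=0$ almost everywhere. For the sign of $\gamma\times\mu$ I use the one-sided scaling variation $\gamma_t(\alpha)\defl(1-t\eta(\alpha))\gamma(\alpha)$ with $\eta\geq 0$ in $L^\infty([0,\pi))$ extended $\pi$-periodically; for small $t\geq 0$ this remains in $B^\infty_{\rm ap}(\R,\R^2)$. Maximality gives $0\geq\tfrac{d}{dt}\big|_{t=0^+}\omega_p(\gamma_t)=-\int_0^\pi\!\!\int_\alpha^\pi p_{\alpha,\beta}(\eta(\alpha)+\eta(\beta))\gamma(\alpha)\times\gamma(\beta)\,d\beta\,d\alpha$, and the first identity converts this to $\int_0^\pi \eta(\alpha)\gamma(\alpha)\times\mu(\alpha)\,d\alpha\geq 0$ for every nonnegative $\eta$, hence $\gamma\times\mu\geq 0$ almost everywhere.

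The main technical hurdle is less the calculation than the careful bookkeeping needed to confirm that both variations truly stay inside $B^\infty_{\rm ap}(\R,\R^2)$; this is why the test functions $\eta$ must be $\pi$-periodic, and it is what forces the conclusion only for integrals against $\pi$-periodic test functions, which is exactly what is needed since $\gamma\cdot\mu$ and $\gamma\times\mu$ are themselves $\pi$-periodic (the product of two antipodal functions).
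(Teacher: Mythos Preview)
Your argument is correct. For $\mu\in L^\infty_{\rm ap}$ and for orthogonality your approach coincides with the paper's: both use the first variation of the rotational family $\gamma_t=e^{i(\nu+t\delta)}$ (equivalently $R_{t\eta}\gamma$) and the same Fubini/antipodality bookkeeping to collapse the double integral onto $\int_0^\pi \delta\,(i\gamma)\times\mu$.

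Where you differ is in the sign condition $\gamma\times\mu\geq 0$. The paper stays with the rotational family and computes the \emph{second} variation, obtaining
\[
0\leq \int_0^\pi\!\!\int_\alpha^\pi p_{\alpha,\beta}(\delta(\alpha)-\delta(\beta))^2\,\gamma(\alpha)\times\gamma(\beta)\,d\beta\,d\alpha,
\]
then specializes to $\delta=\chi_{[a,b]}$, rearranges the resulting integrals into $\int_a^b \gamma\times\mu_{a,b}$, compares $\mu_{a,b}$ with $\mu$ up to an $O(|b-a|)$ error, and finishes by Lebesgue differentiation as $b-a\to 0$. Your route---the one-sided radial variation $\gamma_t=(1-t\eta)\gamma$ with $\eta\geq 0$---extracts the same conclusion from a \emph{first} variation inequality and the clean identity $\int_0^\pi \eta\,\gamma\times\mu=\int\!\!\int p_{\alpha,\beta}(\eta(\alpha)+\eta(\beta))\gamma(\alpha)\times\gamma(\beta)$, bypassing the characteristic-function trick and the differentiation argument entirely. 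This is a genuine simplification; it also does not rely on Lemma~\ref{lem:gammaboundary} (i.e.\ on $|\gamma|=1$ a.e.), whereas the paper invokes that lemma at the outset to write $\gamma=e^{i\eta}$. The trade-off is that the paper's second-variation inequality is of independent interest later (it reappears in Lemma~\ref{lem:strictconvexity}), so in context it is not wasted effort, but for this lemma alone your argument is the shorter one.
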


\begin{proof}
Because $p$ is bounded and satisfies $p_{\alpha + \pi,\beta + \pi} \gamma(\beta + \pi) = -p_{\alpha,\beta}\gamma(\beta)$, we have $\mu \in L^\infty_{\rm ap}(\R,\R^2)$. By Lemma~\ref{lem:gammaboundary} we may assume that $|\gamma(\alpha)| = 1$ almost everywhere, so that $\gamma(\alpha) = e^{i\eta(\alpha)}$ for some measurable function $\eta : \R \to \R$ with the property $\eta(\alpha + \pi) = \eta(\alpha) + \pi$ for all $\alpha$. We can further assume that $\eta(\alpha) \in (-\pi, \pi]$ for $\alpha \in [0,\pi)$. Let $\delta : \R \to \R$ be a bounded, $\pi$-periodic measurable function and consider the variation $\gamma_t \defl e^{i(\eta + t\delta)}$ in $B^\infty_{\rm ap}(\R,\R^2)$. Define
\begin{align*}
v(\alpha) \defl \left.\frac{d}{d t}\right|_{t=0} \gamma_t(\alpha) = i\delta(\alpha) e^{i\eta(\alpha)} , \quad
w(\alpha) \defl \left.\frac{d^2}{d t^2}\right|_{t=0} \gamma_t(\alpha) = -\delta(\alpha)^2 e^{i\eta(\alpha)} . 
\end{align*}
By the maximality of $\gamma$, we have
\begin{align*}
0 & = \left.\frac{d}{d t}\right|_{t=0}\omega_p(\gamma_t) = \int_0^\pi \int_\alpha^\pi p_{\alpha,\beta} [v(\alpha) \times \gamma(\beta) + \gamma(\alpha) \times v(\beta)]\,d\beta\,d\alpha \\
& = \int_0^\pi v(\alpha) \times \int_\alpha^\pi p_{\alpha,\beta} \gamma(\beta)\,d\beta\,d\alpha-\int_0^\pi v(\beta) \times \int_0^\beta p_{\alpha,\beta}\gamma(\alpha)\,d\alpha\,d\beta \\
& = \int_0^\pi v(\alpha) \times \int_\alpha^\pi p_{\alpha,\beta} \gamma(\beta)\,d\beta\,d\alpha -\int_0^\pi v(\alpha) \times \int_0^\alpha p_{\alpha,\beta}\gamma(\beta)\,d\beta\,d\alpha \\
& = \int_0^\pi v(\alpha) \times \mu(\alpha)\,d\alpha \\
& = \int_0^\pi \delta(\alpha)\,(i\gamma(\alpha)) \times \mu(\alpha)\,d\alpha .
\end{align*}
Since this holds for all bounded $\delta$, it follows that $\gamma$ and $\mu$ are orthogonal almost everywhere. This establishes the first statement.

The second variation satisfies
\begin{align*}
0 & \geq \left.\frac{d^2}{d t^2}\right|_{t=0}\omega_p(\gamma_t) \\
& = \int_0^\pi \int_\alpha^\pi p_{\alpha,\beta} [\gamma(\alpha) \times w(\beta) + w(\alpha) \times \gamma(\beta) + 2 v(\alpha) \times v(\beta)]\,d\beta\,d\alpha \\
& = \int_0^\pi \int_\alpha^\pi p_{\alpha,\beta} \left[(-\delta(\alpha)^2 - \delta(\beta)^2)\gamma(\alpha) \times \gamma(\beta) + 2 \delta(\alpha)\delta(\beta)\gamma(\alpha) \times \gamma(\beta)\right]\,d\beta\,d\alpha \\
& = -\int_0^\pi \int_\alpha^\pi p_{\alpha,\beta} (\delta(\alpha) - \delta(\beta))^2\gamma(\alpha) \times \gamma(\beta) \,d\beta\,d\alpha .
\end{align*}
Fix $0 < a < b < \pi$ and let $\delta$ be the $\pi$-periodic extension of $\chi_{[a,b]}$, then
\begin{align*}
0 & \leq \int_0^a \int_a^b p_{\alpha,\beta} \gamma(\alpha) \times \gamma(\beta) \,d\beta\,d\alpha + \int_a^b \int_b^\pi p_{\alpha,\beta} \gamma(\alpha) \times \gamma(\beta) \,d\beta\,d\alpha \\
& = \int_a^b \int_0^a p_{\alpha,\beta} \gamma(\alpha) \times \gamma(\beta) \,d\alpha\,d\beta + \int_a^b \int_b^\pi p_{\alpha,\beta} \gamma(\alpha) \times \gamma(\beta) \,d\beta\,d\alpha \\
& = \int_a^b \int_0^a p_{\beta,\alpha} \gamma(\beta) \times \gamma(\alpha) \,d\beta \,d\alpha + \int_a^b \int_b^\pi p_{\alpha,\beta} \gamma(\alpha) \times \gamma(\beta) \,d\beta\,d\alpha \\
& = -\int_a^b \int_0^a p_{\alpha,\beta} \gamma(\alpha) \times \gamma(\beta) \,d\beta \,d\alpha + \int_a^b \int_b^\pi p_{\alpha,\beta} \gamma(\alpha) \times \gamma(\beta) \,d\beta\,d\alpha \\
& = \int_a^b \gamma(\alpha) \times \int_b^{a+\pi} p_{\alpha,\beta}\gamma(\beta) \,d\beta\,d\alpha .
\end{align*}
We denote the inner integral in the last line by $\mu_{a,b}(\alpha)$ for $\alpha \in [a,b]$. It then follows that
\[
|\mu_{a,b}(\alpha) - \mu(\alpha)| \leq \left|\int_\alpha^{b} p_{\alpha,\beta}\gamma(\beta) \,d\beta\right| + \left|\int_{a + \pi}^{\alpha + \pi} p_{\alpha,\beta}\gamma(\beta) \,d\beta\right| \leq C|a-b| ,
\]
where $p_{\alpha,\beta} \leq C$ is a uniform bound. Thus,
\begin{align*}
\int_a^b \gamma(\alpha) \times \mu(\alpha) \,d\alpha & \geq \int_a^b \gamma(\alpha) \times \mu_{a,b}(\alpha) \,d\alpha - \int_a^b C|a-b|\,d\alpha \geq -C|a-b|^2 .
\end{align*}
If $\alpha \in (0,\pi)$ is a Lebesgue density point of $\gamma \times \mu$, setting $a = \alpha-\varepsilon$ and $b =  \alpha + \varepsilon$, and dividing both sides by $2\varepsilon$, and taking the limit $\varepsilon \to 0$ yields $\gamma(\alpha) \times \mu(\alpha) \geq 0$, by the Lebesgue differentiation theorem.
\end{proof}

By Lemma~\ref{lem:compactness}, there exists a maximizer $\gamma$ of $\omega_p$. Lemma~\ref{lem:gammaboundary} and Lemma~\ref{lem:gammaprop} imply that this maximizer satisfies $|\gamma(\alpha)| = 1$ and $\mu(\alpha) = |\mu(\alpha)|i\gamma(\alpha)$ for almost every $\alpha$. Under additional assumptions, we further obtain that this maximizer $\gamma$ admits a continuous representation.

\begin{lem}
	\label{lem:smoothness}
Assume that $(\alpha,\beta) \mapsto p_{\alpha,\beta}$ is additionally continuous on $\{(\alpha,\beta) : 0 < \alpha < \beta < \pi\}$. Then $\mu$ is continuous, and any maximizer $\gamma\in B^\infty_{\rm ap}(\R,\R^2)$ of $\omega_p$ has a continuous representation on $F = \{\alpha \in \R : \mu(\alpha) \neq 0\}$.
\end{lem}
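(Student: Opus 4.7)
The plan is to first show that $\mu\colon\R\to\R^2$ is continuous everywhere, and then use the pointwise relation $\mu(\alpha)=|\mu(\alpha)|\,i\gamma(\alpha)$ obtained from Lemma~\ref{lem:gammaprop} to read off a continuous representative of $\gamma$ on the open set $F=\{\alpha:\mu(\alpha)\neq 0\}$.

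For the continuity of $\mu$, fix $\alpha_0\in\R$ and decompose
\[
\mu(\alpha)-\mu(\alpha_0)=\int_\alpha^{\alpha+\pi}(p_{\alpha,\beta}-p_{\alpha_0,\beta})\gamma(\beta)\,d\beta+\int_{\alpha_0+\pi}^{\alpha+\pi}p_{\alpha_0,\beta}\gamma(\beta)\,d\beta-\int_{\alpha_0}^{\alpha}p_{\alpha_0,\beta}\gamma(\beta)\,d\beta.
\]
The last two integrals tend to $0$ as $\alpha\to\alpha_0$ by the essential bound on $p$ and the bound $|\gamma|\le 1$. To control the first, pick $\delta>0$ and split the range $[\alpha,\alpha+\pi]$ into the two end intervals $[\alpha,\alpha+\delta]$, $[\alpha+\pi-\delta,\alpha+\pi]$ and the middle interval $[\alpha+\delta,\alpha+\pi-\delta]$. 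The two end intervals each contribute at most $C\delta$ in absolute value (uniformly in $\alpha$) where $C$ is twice the essential bound on $p$. On the middle interval, for $\alpha$ within $\delta/2$ of $\alpha_0$, both pairs $(\alpha,\beta)$ and $(\alpha_0,\beta)$ lie in a fixed compact subset of $\{(x,y):x\neq y\ \text{mod}\ \pi\Z\}$ where $p$ is jointly continuous (after invoking $\pi$-periodicity in both arguments); hence $p_{\alpha,\beta}-p_{\alpha_0,\beta}\to 0$ uniformly in $\beta$ as $\alpha\to\alpha_0$. Letting first $\alpha\to\alpha_0$ and then $\delta\to 0$ gives continuity of $\mu$ at $\alpha_0$.

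For the second step, Lemma~\ref{lem:gammaboundary} and Lemma~\ref{lem:gammaprop} together say that for almost every $\alpha$ we have $|\gamma(\alpha)|=1$, $\mu(\alpha)\perp\gamma(\alpha)$, and $\gamma(\alpha)\times\mu(\alpha)\ge 0$. These three facts force the identity $\mu(\alpha)=|\mu(\alpha)|\,i\gamma(\alpha)$ for a.e.\ $\alpha$. In particular, for a.e.\ $\alpha\in F$,
\[
\gamma(\alpha)=-i\,\mu(\alpha)/|\mu(\alpha)|.
\]
The right-hand side is continuous on the open set $F$ because $\mu$ is continuous and nonvanishing there, so redefining $\gamma$ on a set of measure zero inside $F$ produces the claimed continuous representative.

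The main obstacle is the first step: since $p_{\alpha,\beta}$ is assumed continuous only away from the diagonal $\beta-\alpha\in\pi\Z$, and may fail to extend continuously there, one cannot simply invoke uniform continuity on $[\alpha_0,\alpha_0+\pi]$. The essential boundedness of $p$ is what saves the argument, allowing the ends of the integration range to be absorbed into an $\varepsilon$-budget before continuity is used on the complement.
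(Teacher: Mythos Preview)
Your proof is correct and follows the same approach as the paper: establish continuity of $\mu$, then use Lemma~\ref{lem:gammaprop} to write $\gamma(\alpha)=-i\,\mu(\alpha)/|\mu(\alpha)|$ almost everywhere on $F$. The paper's proof is terser, simply invoking the dominated convergence theorem for the continuity of $\mu$, whereas you spell out explicitly how to handle the potential discontinuity of $p$ near the diagonal by splitting off $\delta$-neighbourhoods of the endpoints and using essential boundedness there; both arguments amount to the same thing.
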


\begin{proof}
The path $\mu$ is continuous as a consequence of the continuity of $p$ and the Lebesgue dominated convergence theorem. Hence, the set $F$ is open. For almost every $\alpha \in F$, we have
\begin{equation*}
-i\frac{\mu(\alpha)}{|\mu(\alpha)|} = \gamma(\alpha)
\end{equation*}
by Lemma~\ref{lem:gammaprop}. Since the left-hand side is continuous on $F$, it follows that $\gamma$ admits a continuous representation on $F$ as well.
\end{proof}

\subsection{Variation of paths and coefficients}

Let $r \in \left(0, \frac{\pi}{2}\right)$ be fixed, and suppose that $h \in \bS^2_+ \setminus \bS^1$ satisfies $\dist(h, \bS^1) = d \geq r$. Then $h = \arccos(\cos(d)\cos(\cdot - \tau))$ for some $\tau \in \R$. By Lemma~\ref{lem:geometriccoeff}, we have $p_{\alpha,\beta}(h) = p_\alpha(h) p_\beta(h)$, where
\begin{equation}
\label{eq:productagain}
p_\alpha(h) = \frac{\sin(d)}{\sin(h_\alpha)^2} = \frac{\sin(d)}{1 - \cos(d)^2\cos(\alpha - \tau)^2} \in \left[\sin(r), \sin(r)^{-1}\right] .
\end{equation}
By Lemma~\ref{lem:tangentsphere}, the coefficients $p_\alpha(h)$ satisfy $\int_0^{2\pi} p_\alpha(h) \,d\alpha = 2\pi$. As in Lemma~\ref{lem:prodtype}, we assume that $\nu_h : \R \to \R$ is the unique bi-Lipschitz function with $\nu_h(0) = 0$ and derivative $\nu_h'(\alpha) = p_\alpha(h)$. Since $\alpha \mapsto p_\alpha(h)$ is $\pi$-periodic, the function $\nu_h$ satisfies $\nu_h(\alpha + \pi) = \nu_h(\alpha) + \pi$. The path $\gamma(\alpha) = e^{i\nu_h(\alpha)}$ is a maximizer of $\omega_h$, and we have the identity
\begin{equation}
	\label{eq:areaformula2}
\sin\big(\nu_h(\beta) - \nu_h(\alpha)\big) = \gamma(\alpha) \times \gamma(\beta) = \frac{\sin(d)\sin(\beta - \alpha)}{\sin(h_\alpha)\sin(h_\beta)},
\end{equation}
as a consequence of Lemmas~\ref{lem:tangentsphere} and~\ref{lem:prodtype}.

We denote by $L_\pi^2(\R)$ the space of $\pi$-periodic functions $\eta \in L^2_{\mathrm{loc}}(\R)$, equipped with the inner product $\langle \eta_1,\eta_2\rangle \defl \pi\int_0^\pi \eta_1(t)\eta_2(t)\,dt$. The subspace of functions with zero mean is defined by
\[
L_{\pi,0}^2(\R) \defl \left\{\eta \in L_\pi^2(\R)\ : \ \int_0^\pi \eta = 0 \right\}.
\]
The norm induced by the inner product admits the following double integral representation: For $\eta \in L^2_{\pi,0}(\R)$,
\begin{align}
\nonumber
\int_0^\pi \int_\alpha^\pi (\eta(\beta)-\eta(\alpha))^2\,d\beta\,d\alpha & = \frac{1}{2}\int_0^\pi \int_0^\pi \eta(\beta)^2 - 2\eta(\alpha)\eta(\beta) + \eta(\alpha)^2\,d\beta\,d\alpha \\
\nonumber
& = \int_0^\pi \int_0^\pi \eta(\beta)^2\,d\beta\,d\alpha \\
\label{eq:normdefinition}
& = \int_0^\pi \frac{1}{\pi}\|\eta\|_2^2\,d\alpha = \|\eta\|_2^2.
\end{align}
Since $\eta$ is $\pi$-periodic, the following shifted identity holds for all $x \in \R$:
\begin{align}
 \nonumber
\int_0^\pi \int_0^\pi (\eta(\beta + x)-\eta(\alpha))^2\,d\beta\,d\alpha & = \int_0^\pi \int_x^{\pi + x} (\eta(\beta)-\eta(\alpha))^2\,d\beta\,d\alpha \\
 \nonumber
 & = \int_0^\pi \int_0^{\pi} (\eta(\beta)-\eta(\alpha))^2\,d\beta\,d\alpha \\
 \label{eq:normshift}
 & = 2\|\eta\|_2^2 .
\end{align}

Define the function $\Psi_h : E(\bS^1)\setminus \bS^1 \times L^2_{\pi}(\R) \to \R$ by
\begin{equation}
	\label{eq:psidef}
\Psi_h(f,\eta) \defl \int_0^\pi\int_\alpha^\pi p_{\alpha,\beta}(f) \sin(\Delta_{\alpha,\beta}^h + \Delta^\eta_{\alpha,\beta})\,d\beta\,d\alpha ,
\end{equation}
where $\Delta_{\alpha,\beta}^h \defl \nu_h(\beta) - \nu_h(\alpha)$ and $\Delta^\eta_{\alpha,\beta} \defl \eta(\beta)-\eta(\alpha)$.

\begin{lem}
	\label{lem:psismooth}
For fixed $h \in \bS^2_+\setminus \bS^1$, $f \in E(\bS^1)\setminus \bS^1$ and $\eta,v \in L^2_{\pi}(\R)$, the function $t \mapsto \Psi_h(f,\eta + tv)$ is of class $C^2$, with derivatives given by
\begin{align*}
\frac{d}{dt} \Psi_h(f,\eta + tv) & = \int_0^\pi\int_\alpha^\pi p_{\alpha,\beta}(f) \cos(\Delta_{\alpha,\beta}^h + \Delta_{\alpha,\beta}^{\eta+tv})\Delta_{\alpha,\beta}^v\,d\beta\,d\alpha , \\
\frac{d^2}{dt^2} \Psi_h(f,\eta + tv) & = -\int_0^\pi\int_\alpha^\pi p_{\alpha,\beta}(f) \sin(\Delta_{\alpha,\beta}^h + \Delta_{\alpha,\beta}^{\eta+tv})(\Delta_{\alpha,\beta}^v)^2\,d\beta\,d\alpha .
\end{align*}
\end{lem}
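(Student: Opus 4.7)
The plan is to differentiate under the integral sign, justified by the bounded convergence theorem using the fact that $p_{\alpha,\beta}(f)$ is uniformly bounded once $f$ is fixed. Since $f \in E(S^1)\setminus S^1$, Lemma~\ref{lem:truncated}(1) provides $\varepsilon \in (0,\tfrac{\pi}{2})$ with $f \in E_\varepsilon(S^1)$, and then Lemma~\ref{lem:pfunction}(4) gives the uniform bound $p_{\alpha,\beta}(f) \leq \sin(\varepsilon)^{-2} =: C_\varepsilon$ on the triangle $\{0 \leq \alpha \leq \beta \leq \pi\}$.

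First I would set $A_{\alpha,\beta}(t) \defl \Delta^h_{\alpha,\beta} + \Delta^{\eta + tv}_{\alpha,\beta}$, so that the integrand is $F(t,\alpha,\beta) = p_{\alpha,\beta}(f)\sin(A_{\alpha,\beta}(t))$. For each fixed $\alpha,\beta$ the map $t \mapsto F(t,\alpha,\beta)$ is $C^\infty$ with
\[
\partial_t F = p_{\alpha,\beta}(f)\cos(A_{\alpha,\beta}(t))\,\Delta^v_{\alpha,\beta}, \qquad \partial_t^2 F = -p_{\alpha,\beta}(f)\sin(A_{\alpha,\beta}(t))\,(\Delta^v_{\alpha,\beta})^2,
\]
since $\partial_t A_{\alpha,\beta}(t) = \Delta^v_{\alpha,\beta}$ is independent of $t$. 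Because $|\sin|, |\cos| \leq 1$ these derivatives are dominated, uniformly in $t$ on any bounded interval, by $C_\varepsilon |\Delta^v_{\alpha,\beta}|$ and $C_\varepsilon (\Delta^v_{\alpha,\beta})^2$ respectively.

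Next I would verify that these dominating functions are integrable on the triangle. The identity \eqref{eq:normdefinition} gives
\[
\int_0^\pi\int_\alpha^\pi (\Delta^v_{\alpha,\beta})^2\,d\beta\,d\alpha = \|v\|_2^2 < \infty,
\]
and a Cauchy--Schwarz step then bounds $\int_0^\pi\int_\alpha^\pi |\Delta^v_{\alpha,\beta}|\,d\beta\,d\alpha$ by $(\pi^2/2)^{1/2}\|v\|_2$. With these integrable majorants the classical Leibniz rule (i.e.\ dominated convergence applied to difference quotients) legitimises taking $\tfrac{d}{dt}$ and $\tfrac{d^2}{dt^2}$ inside the double integral, yielding the stated formulas.

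The only mild subtlety — and what I would flag as the main obstacle — is the $C^2$ regularity, which requires continuity in $t$ of the second derivative. But this follows from one more application of dominated convergence: for $t_n \to t$, $\sin(A_{\alpha,\beta}(t_n)) \to \sin(A_{\alpha,\beta}(t))$ pointwise, and the integrand $p_{\alpha,\beta}(f)\sin(A_{\alpha,\beta}(t_n))(\Delta^v_{\alpha,\beta})^2$ is dominated by $C_\varepsilon (\Delta^v_{\alpha,\beta})^2$ independently of $n$. The same argument shows continuity of the first derivative. Continuity of the zeroth-order integrand in $t$ is automatic since $|\sin|$ is bounded by $1$ and the triangle has finite measure.
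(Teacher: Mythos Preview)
Your proposal is correct and follows essentially the same route as the paper: bound $p_{\alpha,\beta}(f)$ uniformly via Lemma~\ref{lem:pfunction}(4), dominate the $t$-derivatives by $C_\varepsilon|\Delta^v_{\alpha,\beta}|$ and $C_\varepsilon(\Delta^v_{\alpha,\beta})^2$, check integrability through \eqref{eq:normdefinition}, and invoke dominated convergence both for differentiating under the integral and for the continuity of the resulting derivatives. The paper's argument is slightly terser but structurally identical; your explicit Cauchy--Schwarz step for the first-order majorant and the separate treatment of $C^2$ continuity are minor elaborations rather than genuine differences.
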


\begin{proof}
By Lemma~\ref{lem:pfunction}(4), the function $(\alpha,\beta) \mapsto p_{\alpha,\beta}(f)$ is uniformly bounded. Moreover, $t \mapsto \Delta^{\eta+tv}_{\alpha,\beta}$ is smooth and satisfies
\[
\left|\frac{1}{s}\left(\Delta^{\eta+(t+s)v}_{\alpha,\beta} - \Delta^{\eta+tv}_{\alpha,\beta} \right)\right| \leq |v(\beta)-v(\alpha)| .
\]
The function $(\alpha,\beta) \mapsto p_{\alpha,\beta}(f)|v(\beta)-v(\alpha)|$ is integrable, since $v \in L^2([0,\pi)) \subset L^1([0,\pi))$. Consequently, $t \mapsto\Psi_h(f,\eta + tv)$ is differentiable with derivative
\begin{align*}
\frac{d}{dt} \Psi_h(f,\eta + tv) & = \int_0^\pi\int_\alpha^\pi p_{\alpha,\beta}(f)\frac{d}{dt}\sin(\Delta_{\alpha,\beta}^h + \Delta_{\alpha,\beta}^{\eta + tv})\,d\beta\,d\alpha \\
 & = \int_0^\pi\int_\alpha^\pi p_{\alpha,\beta}(f) \cos(\Delta_{\alpha,\beta}^h + \Delta_{\alpha,\beta}^{\eta + tv})\Delta_{\alpha,\beta}^v \,d\beta\,d\alpha,
\end{align*}
as a consequence of the dominated convergence theorem. The second derivative is computed similarly, observing that the function $(\alpha,\beta) \mapsto p_{\alpha,\beta}(f)(v(\beta) - v(\alpha))^2$ is integrable. Finally, $t \mapsto \frac{d^2}{dt^2} \Psi_h(f,\eta + tv)$ is continuous again by the dominated convergence theorem.
\end{proof}

The following lemma establishes the concavity of $\eta \mapsto \Psi_h(f,\eta)$, provided certain conditions are met.

\begin{lem}
	\label{lem:strictconcavity}
Let $h \in \bS^2_+ \cap E_r(\bS^1)$ for some $r \in (0,\frac{\pi}{2})$. Then there exist $\varepsilon(r), c(r) > 0$ such that for any $\eta \in L^2_{\pi,0}(\R)$ and $f \in E(\bS^1)$ satisfying $\max\{\|f-h\|_\infty,\|\eta\|_\infty\} \leq \varepsilon$, the inequality
\[
\int_0^\pi\int_\alpha^\pi p_{\alpha,\beta}(f) \sin(\Delta_{\alpha,\beta}^h + \Delta_{\alpha,\beta}^\eta)(\Delta_{\alpha,\beta}^v)^2\,d\beta\,d\alpha \geq c\|v\|_2^2
\]
holds for all $v \in L^2_{\pi,0}(\R)$.
\end{lem}

\begin{proof}
Assume that $h_\alpha = \arccos(\cos(d)\cos(\alpha - \tau))$ for some $d \geq r$ and $\tau \in \R$. For $\delta \in (0,\frac{\pi}{2})$, define the sets
\begin{gather}
	\begin{aligned}
		\label{eq:setdef}
		D & \defl \left\{(\alpha,\beta)\in [0,\pi]^2\setminus \{(0,\pi)\} : \alpha < \beta\right\} , \\
		A_\delta & \defl \left\{(\alpha,\beta) \in D : \inf_{k \in \Z}|\beta - \alpha + \pi k| < \delta\right\} .
	\end{aligned}
\end{gather}
Note that $D \setminus A_\delta$ is compact and $\cL^2(A_\delta) = \pi \delta$. Before proceeding, we require an $L^2$-estimate of $v$ over $A_\delta$. With $(x+y)^2 \leq 2(x^2 + y^2)$ and \eqref{eq:normshift}, we obtain
\begin{align}
 \nonumber
\int_{A_\delta} (\Delta_{\alpha,\beta}^v)^2\,d\beta\,d\alpha & = \int_{0}^{\pi} \int_\alpha^{\alpha + \delta} (v(\beta) - v(\alpha))^2 \,d\beta\,d\alpha \\
 \nonumber
 & = \int_{0}^{\pi} \int_0^{\delta} (v(\alpha + x) - v(\alpha))^2 \,dx\,d\alpha \\
 \nonumber
 & = \frac{1}{\pi}\int_0^\pi \int_{0}^{\pi} \int_0^{\delta} ((v(\alpha + x) - v(z)) + (v(z) - v(\alpha)))^2 \,dx\,d\alpha \, dz \\
 \nonumber
 & \leq \frac{2}{\pi} \int_0^{\delta} \int_0^\pi \int_{0}^{\pi} (v(\alpha + x) - v(z))^2 + (v(\alpha) - v(z))^2 \,d\alpha \, dz \,dx \\
 \label{eq:diagonalestimate}
 & = \frac{8}{\pi} \int_0^{\delta} \|v\|_2^2 \,dx = \frac{8}{\pi}\delta \|v\|_2^2 .
\end{align}
In the first line, the integral over the triangular region defined by $\alpha \geq 0$ and $\alpha + \pi - \delta \leq \beta \leq \pi$ is replaced by the integral over the triangular region defined by $\beta \geq \pi$ and $\beta - \delta \leq \alpha \leq \pi$, via the isometry $(\alpha,\beta) \mapsto (\beta, \alpha + \pi)$. This change of variables preserves $(v(\beta) - v(\alpha))^2$ because $v$ is $\pi$-periodic.

Let $m(r) \defl \sin(r)$ and $M(r) \defl \sin(\frac{r}{2})^{-2}$. The following statements are true:
\begin{enumerate}
	\item $p_{\alpha,\beta}(h) \geq m^2 > 0$ for all $(\alpha,\beta) \in D$ due to \eqref{eq:productagain}.
	\item $(\alpha, \beta, f) \mapsto p_{\alpha,\beta}(f)$ is continuous on $D \times \B(h, \frac{r}{2})$ and takes values in the interval $[0, M]$, as established in Lemma~\ref{lem:pfunction}.
	\item $\nu_h : [0,\pi] \to [0,\pi]$ is increasing and bi-Lipschitz with $m|\beta - \alpha| \leq |\nu_h(\beta) - \nu_h(\alpha)|$, since  $\nu_h'(\alpha) = p_\alpha(h) \geq m$.
\end{enumerate}
Accordingly, for any $\delta \in (0,\frac{\pi}{2})$, there exists $\varepsilon(\delta,r) \in (0,\min\{\frac{r}{2},\delta\})$ such that for all $(\alpha,\beta) \in D \setminus A_\delta$ and all $f,\eta$ with $\max\{\|f-h\|_\infty,\|\eta\|_\infty\} \leq \varepsilon$, we have:
\begin{enumerate}[label=(\alph*)]
	\item $p_{\alpha,\beta}(f) \geq p_{\alpha,\beta}(h) - \delta$.
	\item $\sin(\Delta_{\alpha,\beta}^h) \geq 2\varepsilon$.
	\item $\sin(\Delta_{\alpha,\beta}^h + \Delta^\eta_{\alpha,\beta}) \geq \sin(\Delta_{\alpha,\beta}^h) - 2\varepsilon \geq 0$.
\end{enumerate}
Property (c) follows from (b) together with the uniform bound on $\eta$. As a consequence of (a), (b), (c), \eqref{eq:normdefinition} and \eqref{eq:diagonalestimate}, we obtain the following estimate:
\begin{align*}
& \int_D p_{\alpha,\beta}(f) \sin(\Delta_{\alpha,\beta}^h + \Delta_{\alpha,\beta}^\eta)(\Delta_{\alpha,\beta}^v)^2 \\
& \geq \int_{D \setminus A_\delta} (p_{\alpha,\beta}(h) - \delta) (\sin(\Delta_{\alpha,\beta}^h) - 2\varepsilon) (\Delta_{\alpha,\beta}^v)^2  - \int_{A_\delta} M (\Delta_{\alpha,\beta}^v)^2 \\
 & \geq \int_{D \setminus A_\delta} p_{\alpha,\beta}(h)\sin(\Delta_{\alpha,\beta}^h)(\Delta_{\alpha,\beta}^v)^2 - \delta(1 + 2 M) \int_{D \setminus A_\delta}(\Delta_{\alpha,\beta}^v)^2 - M \int_{A_\delta} (\Delta_{\alpha,\beta}^v)^2 \\
 & \geq \int_{D} p_{\alpha,\beta}(h)\sin(\Delta_{\alpha,\beta}^h)(\Delta_{\alpha,\beta}^v)^2 - \delta (1 + 2M) \int_{D}(\Delta_{\alpha,\beta}^v)^2 - (M + 1) \int_{A_\delta} (\Delta_{\alpha,\beta}^v)^2 \\
 & \geq \int_{D} p_{\alpha,\beta}(h)\sin(\Delta_{\alpha,\beta}^h)(\Delta_{\alpha,\beta}^v)^2 - C\delta \|v\|_2^2
\end{align*}
for some $C(r) > 0$. If $a \in (0,\frac{\pi}{2})$ and $(\alpha,\beta) \in D \setminus A_a$, then
\[
\sin(\Delta_{\alpha,\beta}^h) = \sin(\nu_h(\beta) - \nu_h(\alpha)) \geq \tfrac{2}{\pi} a m
\]
by (3). Thus with (1), \eqref{eq:normdefinition} and \eqref{eq:diagonalestimate}, we have
\begin{align*}
\int_{D} p_{\alpha,\beta}(h)\sin(\Delta_{\alpha,\beta}^h)(\Delta_{\alpha,\beta}^v)^2 & \geq m^2\int_{D\setminus A_a} \sin(\Delta_{\alpha,\beta}^h)(\Delta_{\alpha,\beta}^v)^2 \\
 & \geq \tfrac{2}{\pi} a m^3 \int_{D \setminus A_a} (\Delta_{\alpha,\beta}^v)^2 \\
 & \geq \tfrac{2}{\pi}a m^3(1 - \tfrac{8}{\pi}a)\|v\|_2^2 .
\end{align*}
By choosing $a = \frac{\pi}{16}$ and $\delta(r) > 0$ sufficiently small so that $C\delta \leq \frac{1}{32}m^3$, we obtain the estimate
\[
\int_D p_{\alpha,\beta}(f) \sin(\Delta_{\alpha,\beta}^h + \Delta_{\alpha,\beta}^\eta)(\Delta_{\alpha,\beta}^v)^2 \geq \tfrac{1}{16} m^3\|v\|_2^2,
\]
for all $\eta,v \in L^2_{\pi,0}(\R)$ and $f \in E(\bS^1)$ whenever $\max\{\|f - h\|_\infty,\|\eta\|_\infty\} \leq \varepsilon(\delta,r)$.
\end{proof}

For $f \in E(\bS^1)\setminus \bS^1$ and $\gamma \in B^\infty_{\rm ap}(\R,\R^2)$, the path $\mu_{f,\gamma} \in L^\infty_{\rm ap}(\R,\R^2)$ is defined by
\begin{equation}
\label{eq:mudef}
\mu_{f,\gamma}(\alpha) \defl \int_\alpha^{\alpha + \pi}q_{\alpha,\beta}(f)\gamma(\beta)\,d\beta ,
\end{equation}
where $q_{\alpha,\beta}(f) \defl \sin(f_\alpha)^{2} p_{\alpha,\beta}(f)$. Next, we show that for fixed $\gamma \in B^\infty_{\rm ap}(\R,\R^2)$, the map $f \mapsto \mu_{f,\gamma}$ (as well as other related maps) is H\"older continuous. Recall that $\|\omega_f\|_{\rm ir}$ is characterized by \eqref{eq:curveaction}.

\begin{lem}
	\label{lem:hoeldercont}
For any $\xi \in (0,1)$ and $r \in \left(0, \frac{\pi}{2}\right)$, there exists $H(\xi, r) > 0$ such that
\begin{align*}
	H\|f-g\|_\infty^\xi \geq & \max\biggl\{ \int_0^\pi\int_\alpha^\pi |p_{\alpha,\beta}(f) - p_{\alpha,\beta}(g)| \, d\beta \, d\alpha , \\
	& |\omega_f(\gamma) - \omega_g(\gamma)| , \ |\|\omega_f\|_{\rm ir} - \|\omega_g\|_{\rm ir}| , \ \|\mu_{f,\gamma} - \mu_{g,\gamma}\|_\infty \biggr\} ,
\end{align*}
for all $\gamma \in B^\infty_{\rm ap}(\R,\R^2)$ and all $f,g \in E_r(\bS^1)$.
\end{lem}

\begin{proof}
Throughout the proof, we fix $f, g \in E_r(\bS^1)$ and define $\phi^t \defl (1 - t)f + t g$ for $t \in [0,1]$.
The sets $D$ and $A_\delta$ for $\delta \in \left(0, \frac{\pi}{2}\right)$ are defined as in \eqref{eq:setdef}.
	
It follows from Lemma~\ref{lem:pfunction}(4), Lemma~\ref{lem:derivative}, and Lemma~\ref{lem:unifboundder} that the function $p_{\alpha,\beta}(\phi^t)$ is smooth in $t$, and that both $p_{\alpha,\beta}(\phi^t)$ and its first derivative with respect to $t$ are uniformly bounded by a constant $M(r) > 0$. Moreover, by Lemma~\ref{lem:unifboundder}, we may assume that $M$ is chosen sufficiently large so that
\begin{equation*}
|\partial_t p_{\alpha,\beta}(\phi^t)| \leq \frac{M}{\sin(\beta-\alpha)} \|f - g\|_\infty
\end{equation*}
holds for all $t \in [0,1]$ and all $(\alpha,\beta) \in D$. Since all terms to be estimated are uniformly bounded for functions in $E_r(\bS^1)$, we may, without loss of generality, assume that $0 < \delta \defl \|f - g\|_\infty^\xi < \frac{\pi}{2}$. Since $\cL^2(A_\delta) = \pi \delta$ and $\frac{\pi}{2}\sin(\beta - \alpha) \geq \delta$ for $(\alpha,\beta) \in D\setminus A_\delta$, as in the proof of Lemma~\ref{lem:strictconcavity}, it follows that
\begin{align*}
|\omega_f(\gamma) - \omega_g(\gamma)| & \leq \int_D |p_{\alpha,\beta}(f)-p_{\alpha,\beta}(g)| \leq \int_D \sup_{t \in [0,1]} |\partial_t p_{\alpha,\beta}(\phi^t)| \\
 & \leq \int_{A_\delta} M + \int_{D\setminus A_\delta} \frac{M}{\sin(\beta-\alpha)} \|f - g\|_\infty \\
 & \leq M\pi \delta + M\|f - g\|_\infty^{\xi}  \int_{D\setminus A_\delta} \frac{1}{\sin(\beta-\alpha)}\|f - g\|_\infty^{1-\xi} \\
 & \leq \|f-g\|_\infty^{\xi}\left(M\pi + M\left(\frac{\pi}{2}\right)^{\frac{1}{\xi} - 1} \int_D \sin(\beta - \alpha)^{\frac{1}{\xi} - 2}\right) .
\end{align*}
In the last line, we used the inequality
\[
\|f - g\|_\infty^{1-\xi} = \delta^\frac{1-\xi}{\xi} \leq \left(\frac{\pi}{2}\right)^{\frac{1}{\xi} - 1}\sin(\beta - \alpha)^{\frac{1}{\xi} - 1}.
\]
The remaining integral needs to be bounded. Without loss of generality, we may assume $\xi > \frac{1}{2}$, since the bound is trivial for $\xi \leq \frac{1}{2}$. The boundedness then follows from the condition $\frac{1}{\xi} - 2 > -1$, because
\begin{align*}
\int_D \sin(\beta - \alpha)^{\frac{1}{\xi} - 2} & \leq \int_0^\pi \int_\alpha^{\alpha + \pi} |\sin(\beta - \alpha)|^{\frac{1}{\xi} - 2}\,d\beta\,d\alpha \\
& = 2\int_0^\pi \int_0^{\frac{\pi}{2}} \sin(t)^{\frac{1}{\xi} - 2}\,dt\,d\alpha \\
& \leq 2\pi \int_0^{\frac{\pi}{2}} (\tfrac{2}{\pi}t)^{\frac{1}{\xi} - 2}\,dt < \infty ,
\end{align*}
where we have used the substitution $t = \beta - \alpha$ and the inequality $\sin t \geq \frac{2}{\pi} t$ for $t \in [0, \frac{\pi}{2}]$. This establishes the estimates for the first two terms in the statement with some $H(\xi,r) > 0$.

For fixed $f$ and $g$ as above, Lemma~\ref{lem:compactness} guarantees the existence of $\gamma \in B^\infty_{\rm ap}(\R,\R^2)$ such that $\|\omega_f\|_{\rm ir} = \omega_f(\gamma)$. Hence,
\[
\|\omega_f\|_{\rm ir} = \omega_f(\gamma) \leq \omega_g(\gamma) + H\|f-g\|_\infty^\xi \leq \|\omega_g\|_{\rm ir} + H\|f-g\|_\infty^\xi .
\]
Exchanging the roles of $f$ and $g$ yields the third estimate.

Next, we verify that the paths $\mu_{f,\gamma}$, defined in \eqref{eq:mudef}, depend continuously on $f$. Inheriting the smoothness from $p_{\alpha,\beta}(\phi^t)$, the function $q_{\alpha,\beta}(\phi^t)$ is also smooth in $t$. Moreover, there exists a constant $C(r) > 0$ such that
\[
\max\{|q_{\alpha,\beta}(\phi^t)|, |\partial_t q_{\alpha,\beta}(\phi^t)|\} \leq C
\]
and, provided $\alpha \neq \beta \text{ mod } \pi$,
\[
|\partial_t q_{\alpha,\beta}(\phi^t)| \leq \frac{C}{|\sin(\beta-\alpha)|}\|g - f\|_\infty.
\]
For $\alpha \in \R$, define $A_{\alpha,\delta} \defl (\alpha,\alpha + \delta) \cup (\alpha + \pi - \delta,\pi)$. Since $\cL(A_{\alpha,\delta}) = 2 \delta$ and $\frac{\pi}{2}\sin(\beta - \alpha) \geq \delta$ for $\beta \in (0,\pi)\setminus A_{\alpha,c}$, we obtain an estimate similar to the one above:
\begin{align*}
|\mu_{f,\gamma}(\alpha) - \mu_{g,\gamma}(\alpha)| & \leq \int_\alpha^{\alpha + \pi} \sup_{t \in [0,1]} |\partial_t q_{\alpha,\beta}(\phi^t)| \\
& \leq \int_{A_{\alpha,\delta}} C + \int_{(\alpha,\alpha + \pi) \setminus A_{\alpha,\delta}} \frac{C} {\sin(\beta-\alpha)} \|f - g\|_\infty \\
& \leq 2C \delta + C\|f - g\|_\infty^{\xi}  \int_{(\alpha,\alpha + \pi) \setminus A_{\alpha,\delta}} \frac{1}{\sin(\beta-\alpha)}\|f - g\|_\infty^{1-\xi} \\
& \leq \|f-g\|_\infty^{\xi}\left(2C + C\left(\frac{\pi}{2}\right)^{\frac{1}{\xi} - 1} \int_\alpha^{\alpha + \pi} \sin(\beta - \alpha)^{\frac{1}{\xi} - 2}\right) .
\end{align*}
The remaining integral is bounded as before, which completes the proof.
\end{proof}

For $h \in \bS^2_+$ with $d \defl \dist(h,\bS^1) \in (0,\frac{\pi}{2}]$, recall that
\[
\nu_h'(\alpha) = p_\alpha(h) = \frac{\sin(d)}{\sin(h_\alpha)^2} .
\]
To each $\pi$-periodic and measurable function $\eta : \R \to \R$, we associate the paths $\gamma_\eta, \sigma_{\eta} \in L^\infty_{\rm ap}(\R,\R^2)$ defined by
\begin{equation}
	\label{eq:sigmadef}
	\gamma_\eta \defl e^{i(\nu_h(s) + \eta(s))} \quad \text{and} \quad
	\sigma_{\eta}(\alpha) \defl \frac{1}{2}\int_\alpha^{\alpha + \pi} p_\beta(h) \gamma_\eta(\beta)\,d\beta,
\end{equation}
as in Lemma~\ref{lem:prodtype}. Note the following properties:
\begin{itemize}
	\item $\mu_{h,\gamma_\eta} = 2\sin(d)\sigma_\eta$.
	\item $\sigma_{\eta}|_{[0,2\pi]}$ is a closed Lipschitz path of length $2\pi$.
	\item $\Psi_h(f,\eta) = \omega_f(\gamma_\eta)$ for $(f,\gamma) \in E(\bS^1)\setminus \bS^1 \times L^2_{\pi}(\R)$.
\end{itemize}
The second point follows from the fact that $|\sigma_{\eta}'(\alpha)| = |-p_\alpha(h) \gamma_\eta(\alpha)| = p_\alpha(h)$ for almost every $\alpha$, and $\int_0^{2\pi}p_\alpha(h)\,d\alpha = 2\pi$, as established in Lemma~\ref{lem:tangentsphere}. The third point is immediate from the definition of $\gamma_\eta$ and $\Psi$ in \eqref{eq:psidef}.

For the special case $\gamma_0 = e^{i\nu_h}$, we have $\omega_{h}(\gamma_0) = \pi$ by \eqref{eq:spherecalibration}, and
\begin{align}
	\nonumber
	\mu_{h,\gamma_0}(\alpha) & = \int_\alpha^{\alpha + \pi}\frac{\sin(d)^2}{\sin(h_\beta)^2}\gamma_0(\beta)\,d\beta = -i\sin(d)\int_\alpha^{\alpha + \pi}i\nu_h'(\beta)e^{i\nu_h(\beta)}\,d\beta  \\
	\label{eq:mudef0}
	& = -i\sin(d)(e^{i\nu_h(\alpha + \pi)} - e^{i\nu_h(\alpha)}) = 2i\sin(d)e^{i\nu_h(\alpha)}.
\end{align}
The dependence of $\gamma_\eta$ and $\sigma_{\eta}$ on $h \in \bS^2_+\setminus \bS^1$ will be clear from the context.

By Lemma~\ref{lem:prodtype}, the action $\Psi(h, \eta)$ equals the signed area spanned by $\sigma_\eta$, which is maximized by $\sigma_0$. The stability of the planar isoperimetric inequality allows one to bound the defect of $\sigma_\eta$ from a round circle in terms of $|\Psi(h, \eta) - \Psi(h, 0)|$. This yields estimates on $\mu_{f,\gamma_\eta}$.

\begin{lem}
	\label{lem:stability}
Assume that $\xi \in (0,1)$ and $h \in \bS^2_+ \cap E_r(\bS^1)$ for some $r \in (0,\frac{\pi}{2})$. Then there exists $C(\xi,r) > 0$ with the property that for all $f \in E_\frac{r}{2}(\bS^1)$ and all $\eta \in L_\pi^2(\R)$, there exists $c(h,\eta) \in (-\pi,\pi]$ such that
\[
\|\mu_{f,\gamma_{\eta+c}} - \mu_{h,\gamma_{0}}\|_\infty \leq C\left(\|f - h\|_\infty^\frac{\xi}{2} + |\Psi_h(h,0) - \Psi_h(f,\eta)|^\frac{1}{2}\right) .
\]
\end{lem}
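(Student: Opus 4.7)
The plan is to reduce the claim to a quantitative form of the plane isoperimetric inequality applied to the curve $\sigma_\eta$. The key algebraic input is the product identity $q_{\alpha,\beta}(h) = \sin(h_\alpha)^2 p_\alpha(h) p_\beta(h) = \sin(d) p_\beta(h)$, which follows from \eqref{eq:productagain} and yields
\[
\mu_{h,\gamma_\eta}(\alpha) = \sin(d)\int_\alpha^{\alpha+\pi} p_\beta(h)\gamma_\eta(\beta)\,d\beta = 2\sin(d)\,\sigma_\eta(\alpha)
\]
for every $\pi$-periodic measurable $\eta$; in particular $\mu_{h,\gamma_0} = 2\sin(d)\sigma_0$ with $\sigma_0(\alpha) = ie^{i\nu_h(\alpha)}$ parametrizing the unit circle. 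The triangle inequality
\[
\|\mu_{f,\gamma_\eta} - \lambda\mu_{h,\gamma_0}\|_\infty \leq \|\mu_{f,\gamma_\eta} - \mu_{h,\gamma_\eta}\|_\infty + 2\sin(d)\,\|\sigma_\eta - \lambda\sigma_0\|_\infty
\]
then splits the problem. Lemma~\ref{lem:hoeldercont} (applied with parameter $r/2$) controls the first summand by $H\|f-h\|_\infty^\xi$, which since $\|f-h\|_\infty$ is a priori bounded can be absorbed into $\|f-h\|_\infty^{\xi/2}$ up to an overall constant.

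The core of the argument is to bound the second summand through the isoperimetric deficit of $\sigma_\eta$. Since $|\sigma_\eta'(\alpha)| = p_\alpha(h)$, the curve $\sigma_\eta|_{[0,2\pi]}$ is closed and has length exactly $2\pi$ by Lemma~\ref{lem:tangentsphere}, while by Lemma~\ref{lem:prodtype} its signed enclosed area equals $\omega_h(\gamma_\eta) \leq \pi$. Writing
\[
\pi - \omega_h(\gamma_\eta) = \bigl(\Psi_h(h,0) - \Psi_h(f,\eta)\bigr) + \bigl(\omega_f(\gamma_\eta) - \omega_h(\gamma_\eta)\bigr),
\]
the second summand is again at most $H\|f-h\|_\infty^\xi$ by Lemma~\ref{lem:hoeldercont}. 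Reparametrizing both $\sigma_\eta$ and $\sigma_0$ by the bi-Lipschitz map $\alpha \mapsto \nu_h(\alpha)$ produces arc length parametrizations of closed plane curves of length $2\pi$, and Fuglede's quantitative isoperimetric stability theorem \cite{Fu} provides $\lambda \in S^1$ and a translation $c \in \mathbb{C}$ with
\[
\|\sigma_\eta - c - \lambda\sigma_0\|_\infty^2 \leq C\bigl(\pi - \omega_h(\gamma_\eta)\bigr).
\]

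Finally, the translation $c$ is eliminated by invoking the antipodal symmetries $\sigma_\eta(\alpha+\pi) = -\sigma_\eta(\alpha)$ and $\sigma_0(\alpha+\pi) = -\sigma_0(\alpha)$, which are inherited from $\gamma_\eta(\alpha+\pi) = -\gamma_\eta(\alpha)$ and the $\pi$-periodicity of $p_\beta(h)$. Comparing the Fuglede bound at $\alpha$ and at $\alpha+\pi$ and taking half-sums forces both $|c| \leq \|\sigma_\eta - c - \lambda\sigma_0\|_\infty$ and $\|\sigma_\eta - \lambda\sigma_0\|_\infty \leq 2\|\sigma_\eta - c - \lambda\sigma_0\|_\infty$. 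Combining the three estimates and taking square roots to pass from deficit bounds to $L^\infty$ bounds produces the required H\"older-type estimate with exponent $\xi/2$. I expect the main obstacle to be the clean invocation of Fuglede's theorem in the right (reparametrized, arc-length, closed) form; the remaining pieces assemble from the already established H\"older continuity in \textbf{Lemma~\ref{lem:hoeldercont}} and the product-structure computations of subsection~\ref{sec:prodtype}.
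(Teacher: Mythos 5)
Your proposal follows essentially the same route as the paper's proof: the product identity giving $\mu_{h,\gamma_\eta} = 2\sin(d)\,\sigma_\eta$, the triangle-inequality split, two applications of Lemma~\ref{lem:hoeldercont}, the reparametrization by $\nu_h^{-1}$ to arc length, Fuglede's stability theorem, and the square-root decomposition of the deficit $\pi - \omega_h(\gamma_\eta)$ into $|\Psi_h(h,0)-\Psi_h(f,\eta)|$ plus a H\"older term. Two remarks. First, the translation is handled slightly differently: in the paper the zeroth Fourier coefficient $c_0(\eta)$ vanishes outright because of the antipodal symmetry $\tilde\sigma_\eta(t+\pi) = -\tilde\sigma_\eta(t)$, so no translation ever appears; your a posteriori elimination of $c$ by comparing values at $\alpha$ and $\alpha+\pi$ is a correct alternative. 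Second, the one step your sketch leaves implicit is exactly the one you flagged: Fuglede's estimate, as cited in \cite[\S1]{Fu}, compares $\tilde\sigma_\eta$ with the circle $c_0(\eta) + c_1(\eta)e^{it}$, whose radius $|c_1(\eta)|$ need not equal $1$, so it does not directly produce a unimodular $\lambda$ and the unit circle $\lambda\sigma_0$. To pass to $\lambda\sigma_0$ one needs in addition $1 - |c_1(\eta)| \leq (\pi - A_\eta)^{\frac{1}{2}}$, which the paper extracts from Fuglede's Sobolev-norm bound $\int_0^{2\pi}|w|^2 + |w'|^2 \leq 5(\pi - A_\eta)$, at the price of ending with the exponent $\frac{1}{2}$ on the deficit (which is all that is claimed). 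With that ingredient supplied, your argument closes exactly as in the paper.
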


\begin{proof}
Assume that $h_\alpha = \arccos(\cos(d)\cos(\alpha - \tau))$ for parameters $d \in [r,\frac{\pi}{2}]$ and $\tau \in \R$, and let $f \in E_\frac{r}{2}(\bS^1)$. In Lemma~\ref{lem:prodtype} we observed that the signed area spanned by $\sigma_{\eta}|_{[0,2\pi]}$, defined in \eqref{eq:sigmadef}, is given by
\begin{equation}
	\label{eq:psiineq2}
A_\eta \defl \frac{1}{2}\int_0^{2\pi} \sigma_{\eta}(\alpha) \times \sigma_{\eta}'(\alpha)\,d\alpha = \omega_h(\gamma_\eta) = \Psi_h(h,\eta) .
\end{equation}
The length of $\sigma_{\eta}|_{[0,2\pi]}$ is $\int_0^{2\pi} p_\alpha(h)\,d\alpha = 2\pi$. Hence, by the plane isoperimetric inequality, the signed area satisfies $|A_\eta| \leq \pi$. The inverse function $g \defl \nu_h^{-1} : \R \to \R$ is strictly increasing and satisfies the periodicity condition $g(t + \pi) = g(t) + \pi$, just as $\eta_h$ does. The path $\tilde\sigma_{\eta}(t) \defl \sigma_{\eta}(g(t))$ is parametrized by arc length, since
\[
1 = \nu_h'(g(t))g'(t) = p_{g(t)}(h)g'(t) = |\sigma_{\eta}'(g(t))g'(t)| = |\tilde\sigma_{\eta}'(t)|
\]
for almost every $t$. Thus, the path $\tilde \sigma_\eta$ can be written as
\begin{align*}
\tilde\sigma_{\eta}(t) & = \frac{1}{2}\int_{g(t)}^{g(t + \pi)} p_\beta(h) \gamma_{\eta}(\beta)\,d\beta \\
& = \frac{1}{2}\int_{t}^{t + \pi} p_{g(s)}(h) \gamma_{\eta}(g(s))g'(s)\,ds \\
& = \frac{1}{2}\int_{t}^{t + \pi} e^{i(s + \eta(g(s)))} \,ds,
\end{align*}
with derivative
\[
\tilde\sigma_{\eta}'(t) = -e^{i(t + \eta(g(t)))}
\]
for almost every $t$.

The stability result of Fuglede \cite[\S1]{Fu} is stated in terms of the dissimilarity function
\[
w(t) \defl c_0(\eta) + c_1(\eta)e^{it} - \tilde\sigma_{\eta}(t) ,
\]
where
\[
c_n(\eta) \defl \frac{1}{2\pi}\int_0^{2\pi}\tilde\sigma_{\eta}(t)e^{-int}\,dt
\]
denotes the $n$th Fourier coefficient of $\tilde\sigma_{\eta}$. The coefficients of interest satisfy $c_0(\eta) = 0$ by antipodal symmetry of $\tilde \sigma_\eta$, and $|c_1(\eta)| \leq 1$, since
\[
1 = \frac{1}{2\pi}\int_0^{2\pi}|\tilde\sigma_{\eta}'(t)|^2 \,dt = \sum_{n \in \Z} n^2|c_n(\eta)|^2 \geq |c_1(\eta)|^2 .
\]
In the special case $\eta = 0$, we have $c_1(0) = i$, since
\[
\tilde\sigma_{0}(t) = \frac{1}{2}\int_{t}^{t + \pi} e^{is} \,ds = \frac{1}{2}(-ie^{i(t+\pi)} - (-ie^{it})) = ie^{it} .
\]
A translation by a constant affects these Fourier coefficients in the following way: For any $c \in \R$, we have
\[
\tilde \sigma_{\eta + c}(t) = \frac{1}{2}\int_{t}^{t + \pi} e^{i(s + \eta(g(s)))}e^{ic}\,ds = e^{ic}\tilde \sigma(t),
\]
and consequently
\begin{equation}
	\label{eq:fourier_translation}
c_n(\eta + c) = e^{ic}c_n(\eta).
\end{equation}
Choose $c = c(h,\eta) \in (-\pi,\pi]$ such that $c_1(\eta + c) = bi$ for some $b \geq 0$. For the remainder of the proof, we replace $\eta$ by $\eta+c$. This is justified, as $\Psi_h(f,\eta) = \Psi_h(f,\eta+c)$.

Each $\tilde\sigma_n$ is parametrized by arc-length and has total length $2\pi$. Under this assumption, two estimates from \cite[\S1]{F} apply:
\begin{equation}
\label{eq:fuglede}
\int_0^{2\pi}|w|^2 + |w'|^2 \leq 5(\pi - A_\eta) \quad \text{and} \quad \|w\|_\infty^2 \leq 5\pi(\pi - A_\eta) ,
\end{equation}
where the second estimate follows as a consequence of the first. Since $\sigma_{0} = i\gamma_{0} = ie^{i\nu_h}$, it follows that
\begin{align}
	\nonumber
5\pi(\pi - A_\eta) & \geq \sup_t|\tilde\sigma_{\eta}(t) - c_1(\eta)e^{it}|^2 = \sup_{\alpha}|\tilde\sigma_{\eta}(\nu_h(\alpha)) - ibe^{i\nu_h(\alpha)}|^2 \\
	\label{eq:fuglede2}
 & = \|\sigma_{\eta} - b\sigma_{0}\|_\infty^2 .
\end{align}
Since
\[
|w'(t)| \geq |\tilde\sigma_{\eta}'(t)| - |c_1(\eta)ie^{it}| = 1 - b \geq 0 ,
\]
it follows from the first inequality in \eqref{eq:fuglede}, by integrating $|w'|^2$, that
\begin{equation}
	\label{eq:fuglede3}
0 \leq 1 - b \leq (\pi - A_\eta)^\frac{1}{2} .
\end{equation}
Using $|\sigma_{0}| \equiv 1$ and combining \eqref{eq:fuglede2} and \eqref{eq:fuglede3}, we estimate
\begin{align}
\nonumber
\|\sigma_{\eta} - \sigma_{0}\|_\infty & \leq \|\sigma_{\eta} - b \sigma_{0}\|_\infty + \| \sigma_{0} - b \sigma_{0}\|_\infty \\
\nonumber
 & \leq (5\pi)^\frac{1}{2}(\pi - A_\eta)^\frac{1}{2} + (1 - b) \\
 \nonumber
 & \leq (5\pi)^\frac{1}{2}(\pi - A_\eta)^\frac{1}{2} + (\pi - A_\eta)^\frac{1}{2}\\
 \label{eq:muinequality1}
 & \leq C_1(\pi - A_\eta)^\frac{1}{2},
\end{align}
where $C_1 \defl (5\pi)^\frac{1}{2} + 1$.

As a consequence of the identity $\mu_{h,\gamma_{\eta}} = 2\sin(d)\sigma_{\eta}$, together with \eqref{eq:mudef0}, \eqref{eq:psiineq2}, \eqref{eq:muinequality1}, and Lemma~\ref{lem:hoeldercont}, there exists $H(\xi,\frac{r}{2}) > 0$ such that
\begin{align*}
\|\mu_{f,\gamma_{\eta}} - \mu_{h,\gamma_{0}}\|_\infty & \leq \|\mu_{f,\gamma_{\eta}} - \mu_{h,\gamma_{\eta}}\|_\infty + \|\mu_{h,\gamma_{\eta}} - \mu_{h,\gamma_0}\|_\infty \\
 & \leq H\|f - h\|_\infty^\xi + 2\sin(d)\|\sigma_{\eta} - \sigma_{0}\|_\infty \\
 & \leq H\|f - h\|_\infty^\xi + 2\sin(d)C_1(\pi - \Psi_h(h,\eta))^\frac{1}{2} .
\end{align*}
Furthermore, applying Lemma~\ref{lem:hoeldercont} once more and using that $\omega_f(\gamma_{\eta}) = \Psi_h(f,\eta)$, we obtain
\begin{align*}
|\pi - \Psi_h(h,\eta)|^\frac{1}{2} & \leq |\pi - \Psi_h(f,\eta)|^\frac{1}{2} + |\Psi_h(f,\eta) - \Psi_h(h,\eta)|^\frac{1}{2} \\
 & \leq |\Psi_h(h,0) - \Psi_h(f,\eta)|^\frac{1}{2} + H^\frac{1}{2}\|f - h\|_\infty^\frac{\xi}{2} .
\end{align*}
This proves the lemma.
\end{proof}

Owing to the global estimate derived from the stability of the planar isoperimetric inequality, a maximizer $\eta_f$ of $\Psi_h(f, \cdot)$ can be found locally near $0$, provided that $\|f - h\|_\infty$ is sufficiently small. For this result, we temporarily assume that $f \in E^+(\bS^1)$.

\begin{lem}
	\label{lem:optimallinfinity}
Let $r \in (0,\frac{\pi}{2})$ and $\xi \in (0,1)$. Then there exist $\varepsilon \in \left(0,\frac{r}{2}\right)$ and $C > 0$ such that for all $h \in \bS^2_+ \cap E_r(\bS^1)$ and all $f \in \B(h,\varepsilon) \cap E^+(\bS^1)$, there exists $\eta_f \in L^2_{\pi,0}(\R)$ with the following properties:
\begin{enumerate}
	\item $\Psi_h(f,\eta_f) = \sup_{\eta \in L^2_{\pi,0}(\R)} \Psi_h(f,\eta) = \sup_{\gamma \in B^\infty_{\rm ap}(\R,\R^2)} \omega_f(\gamma)$.
	\item $\eta_f$ is continuous.
	\item $\|\eta_f\|_\infty \leq C\|f-h\|_\infty^\frac{\xi}{2}$.
	\item $\|\mu_{f,\gamma_{\eta_f}}\|_\infty \geq \sin(r)$ and $\gamma_{\eta_f} = -i\frac{\mu_{f,\gamma_{\eta_f}}}{|\mu_{f,\gamma_{\eta_f}}|}$.
\end{enumerate}
\end{lem}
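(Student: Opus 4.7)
\emph{Plan.} The strategy is to extract $\eta_f$ from an existing maximizer of $\omega_f$ and then use the isoperimetric stability result (Lemma~\ref{lem:stability}) to pin down both its regularity and its smallness. By Lemma~\ref{lem:compactness} the action $\omega_f$ attains its supremum on $B^\infty_{\rm ap}(\R,\R^2)$ at some $\gamma$. The hypothesis $f \in E^+(S^1)$ gives $p_{\alpha,\beta}(f) > 0$ almost everywhere (Lemma~\ref{lem:pfunction}(3)), so Lemma~\ref{lem:gammaboundary} forces $|\gamma|=1$ a.e. Choose a measurable lift $\gamma = e^{i\phi}$ with $\phi(\alpha+\pi) = \phi(\alpha)+\pi$ and set $\tilde\eta \defl \phi - \nu_h - c$ with $c$ normalizing the mean to zero; then $\tilde\eta$ is bounded, $\pi$-periodic and $\gamma_{\tilde\eta}$ differs from $\gamma$ by a unimodular constant. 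Rotation invariance of $\omega_f$ yields $\Psi_h(f,\tilde\eta) = \omega_f(\gamma) = \|\omega_f\|_{\rm ir}$. Since $\Psi_h(h,0) = \omega_h(\gamma_0) = \pi = \|\omega_h\|_{\rm ir}$, Lemma~\ref{lem:hoeldercont} bounds $|\Psi_h(h,0) - \Psi_h(f,\tilde\eta)| \leq H\|f-h\|_\infty^\xi$, so Lemma~\ref{lem:stability} provides $\lambda \in S^1$ with
\[
\|\mu_{f,\gamma_{\tilde\eta}} - \lambda\mu_{h,\gamma_0}\|_\infty \leq C_1(\xi,r)\|f-h\|_\infty^{\xi/2}.
\]

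From \eqref{eq:mudef} one has $|\lambda\mu_{h,\gamma_0}| = 2\sin(d) \geq 2\sin(r)$ pointwise. Choosing $\varepsilon(\xi,r) \in (0,r/2)$ so small that $C_1\varepsilon^{\xi/2} \leq \sin(r)$, the triangle inequality forces $|\mu_{f,\gamma_{\tilde\eta}}(\alpha)| \geq \sin(r)$ for every $\alpha$. Because $p_{\alpha,\beta}(f)$ is continuous off the diagonal (its explicit formula), Lemma~\ref{lem:smoothness} now applies: $\gamma_{\tilde\eta}$ agrees almost everywhere with the continuous $S^1$-valued path $-i\mu_{f,\gamma_{\tilde\eta}}/|\mu_{f,\gamma_{\tilde\eta}}|$. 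This continuous representative, being close to $\lambda\gamma_0$ in the uniform norm and inheriting the antipodal symmetry $\gamma_{\tilde\eta}(\cdot+\pi) = -\gamma_{\tilde\eta}$, admits a continuous lift $\psi$ satisfying $\psi(\alpha+\pi) = \psi(\alpha)+\pi$. Define $\eta_f \defl \psi - \nu_h - \bar c$ with $\bar c$ zeroing the mean; then $\eta_f$ is continuous, $\pi$-periodic and in $L^2_0(\R)$, and $\gamma_{\eta_f}$ differs from the continuous representative of $\gamma_{\tilde\eta}$ only by a unimodular constant. Claim (1) follows because $\Psi_h(f,\eta_f) = \omega_f(\gamma_{\eta_f}) = \|\omega_f\|_{\rm ir}$ and the latter equals $\sup_\eta \Psi_h(f,\eta)$ since $\{\gamma_\eta : \eta \in L^2_0\}$ exhausts the circle-valued paths up to null sets, on which the supremum of $\omega_f$ is already attained. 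Claims (2) and (3) are immediate: the unimodular rotation preserves both $|\mu|$ and the identity $\gamma = -i\mu/|\mu|$.

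For (4), absorbing the unimodular factor gives $\lambda' \in S^1$ with $\|\mu_{f,\gamma_{\eta_f}} - \lambda'\mu_{h,\gamma_0}\|_\infty \leq C_1\|f-h\|_\infty^{\xi/2}$. Combined with the elementary inequality $|u/|u|-v/|v|| \leq 2|u-v|/|u|$ and the bound $|\mu_{f,\gamma_{\eta_f}}| \geq \sin(r)$ this yields
\[
\|\gamma_{\eta_f} - \lambda'\gamma_0\|_\infty \leq \frac{2 C_1}{\sin(r)}\|f-h\|_\infty^{\xi/2}.
\]
Writing $\lambda' = e^{i c_0}$, the $\pi$-periodic continuous ratio $\gamma_{\eta_f}/(\lambda'\gamma_0) = e^{i(\eta_f - c_0)}$ stays, after a further shrinking of $\varepsilon$, inside the principal branch of $\log$, so $\|\eta_f - c_0\|_\infty \leq C_2(\xi,r)\|f-h\|_\infty^{\xi/2}$. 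Integrating over $[0,\pi]$ and using $\int_0^\pi \eta_f = 0$ bounds $|c_0|$ by the same order, so the triangle inequality gives (4).

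\emph{Main obstacle.} The delicate point is Step~3: guaranteeing that the continuous lift $\psi$ inherits the correct winding ($\psi(\cdot+\pi) = \psi+\pi$ rather than $\psi+\pi(2k+1)$) and that the uniform closeness $\gamma_{\eta_f} \approx \lambda'\gamma_0$ genuinely translates into an $L^\infty$-bound on $\eta_f$ without branch ambiguity. Both reduce to the observation that $\gamma_{\eta_f}/(\lambda'\gamma_0)$ is $\pi$-periodic, continuous and within distance $1$ of $1$ once $\varepsilon$ is small, which pins down the integer of rotation uniquely. The rest is routine manipulation of the already established stability estimate.
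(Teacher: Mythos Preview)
Your proof is correct and follows essentially the same route as the paper's: extract a maximizer via Lemma~\ref{lem:compactness} and Lemma~\ref{lem:gammaboundary}, feed it into Lemma~\ref{lem:stability} together with the H\"older bound from Lemma~\ref{lem:hoeldercont}, use the resulting lower bound on $|\mu|$ to invoke Lemma~\ref{lem:smoothness}, and finally pass from closeness of $\gamma_{\eta_f}$ to $\lambda'\gamma_0$ to an $L^\infty$-bound on $\eta_f$ via the mean-zero condition. The only cosmetic difference is that the paper carries a single $\eta_f$ throughout and adjusts it by a constant at the end, whereas you introduce an intermediate measurable $\tilde\eta$ and then pass to a continuous lift; both handle the branch/winding issue in the same way you flag in your ``main obstacle'' paragraph.
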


\begin{proof}
By the definition of $E^+(\bS^1)$, the coefficients $p_{\alpha,\beta}(f)$ are strictly positive for almost every pair $(\alpha, \beta)$. According to Lemma~\ref{lem:compactness} and Lemma~\ref{lem:gammaboundary}, there exists a maximizer $\gamma \in B^\infty_{\mathrm{ap}}(\R,\R^2)$ of $\omega_f$, which satisfies $|\gamma| = 1$ almost everywhere. Hence there exists a measurable, $\pi$-periodic function $\eta : \R \to \R$ such that $\gamma_{\eta} = e^{i(\nu_h + \eta)}$ is a maximizer of $\omega_f$. By adding appropriate integer multiples of $2\pi$ to $\eta$ pointwise, we may assume that $\eta$ takes values in $(-\pi, \pi]$, and hence lies in $L^2_\pi(\R)$. Let $c = c(h,\eta) \in (-\pi, \pi]$ be the constant provided by Lemma~\ref{lem:stability}. The shifted function $\eta + c$ takes values in $(-2\pi, 2\pi]$. We define $\eta_f \in L^2_\pi(\R)$ to be the further adjustment of $\eta + c$ taking values in $(-\pi, \pi]$. Observe that the corresponding path satisfies $\gamma_{\eta_f} = \gamma_{\eta + c} = e^{ic}\gamma_\eta$, so that $\gamma_{\eta_f}$ remains a maximizer of $\omega_f$ by rotation invariance. In a subsequent step, we will modify $\eta_f$ again to ensure that it lies in $L^2_{\pi,0}(\R)$.

Set $\varepsilon_0 \defl \frac{r}{2} < \dist(h,\bS^1)$. According to Lemma~\ref{lem:hoeldercont} and Lemma~\ref{lem:stability}, there exist constants $H(\xi,\frac{r}{2}), C(\xi,r) > 0$ such that for all $f \in \B(h,\varepsilon_0) \cap E^+(\bS^2)$, the following estimates hold:
\begin{equation*}
|\Psi_h(f,\eta_f) - \Psi_h(h,0)| = |\|\omega_f\|_{\rm ir} - \|\omega_h\|_{\rm ir}| \leq H\|f-h\|_\infty^\xi,
\end{equation*}
\begin{equation*}
\left\|\mu_{f,\gamma_{\eta_f}} - \mu_{h,\gamma_0}\right\|_\infty \leq C\left(\|f - h\|_\infty^\frac{\xi}{2} + |\Psi_h(f,\eta_f) - \Psi_h(h,0)|^\frac{1}{2}\right).
\end{equation*}
Abbreviate $\varepsilon \defl \|f - h\|_\infty$ with $\varepsilon \leq \varepsilon_0$. It follows that
\begin{equation}
\label{eq:psieq4}
\left\|\mu_{f,\gamma_{\eta_f}} - \mu_{h,\gamma_0}\right\|_\infty \leq C\left(\varepsilon^\frac{\xi}{2} + H^\frac{1}{2} \varepsilon^\frac{\xi}{2}\right) = C_1\varepsilon^\frac{\xi}{2}
\end{equation}
for some $C_1(\xi,r) > 0$. The path $\mu_{f,\gamma_{\eta_f}}$ is continuous by Lemma~\ref{lem:smoothness}. Indeed, the path $\mu \in L^\infty_{\mathrm{ap}}(\R,\R^2)$ defined there is given by
\[
\mu(\alpha) = \int_\alpha^{\alpha+\pi} p_{\alpha,\beta}(f)\gamma_{\eta_f}(\beta)\,d\beta,
\]
and thus satisfies $\mu(\alpha) = \sin(f_\alpha)^2\mu_{f,\gamma_{\eta_f}}(\alpha)$. Since $\mu_{h,\gamma_0} = 2i \sin(d) e^{i \nu_h}$ for some $d \geq r$ by \eqref{eq:mudef0}, we may choose $\varepsilon_1(\xi,r) \in (0, \varepsilon_0]$ sufficiently small so that $|\mu_{f,\gamma_{\eta_f}}(\alpha)| \geq \sin(r)$ for all $\alpha$ whenever $\varepsilon \leq \varepsilon_1$. In this setting, we also have $\mu(\alpha) \neq 0$ for all $\alpha$. Moreover, since $\gamma_{\eta_f}$ is a maximizer of $\omega_f$, it is continuous and satisfies $i\gamma_{\eta_f}|\mu| = \mu$ by Lemma~\ref{lem:smoothness}. Equivalently, this can be expressed as $i\gamma_{\eta_f} |\mu_{f,\gamma_{\eta_f}}| = \mu_{f,\gamma_{\eta_f}}$. Because the map $\rho : \C \setminus \oB(0,\sin(r)) \to \bS^1$, defined by $\rho(z) \defl -i\frac{z}{|z|}$, is Lipschitz, it follows from \eqref{eq:psieq4} that
\begin{align*}
\|e^{i\eta_f} - 1\|_\infty = \|e^{i(\nu_h + \eta_f)} - e^{i\nu_h}\|_\infty = \|\gamma_{\eta_f} - \gamma_{\eta_0}\|_\infty & \leq C_2\varepsilon^\frac{\xi}{2}
\end{align*}
for some $C_2(\xi,r) > 0$, provided $\varepsilon \leq \varepsilon_1$. The paths $e^{-i\nu_h}$ and $\gamma_{\eta_f} = e^{i(\nu_h + \eta_f)}$ are continuous; hence, their pointwise product $e^{i\eta_f}$ is also continuous.

Assume that $\varepsilon_2(\xi,r) \in (0,\varepsilon_1]$ is sufficiently small so that $C_2\varepsilon_2^{\xi/2} \leq \sqrt{2}$. Then, for all $\varepsilon \leq \varepsilon_2$, we have $\|e^{i\eta_f} - 1\|_\infty \leq C_2\varepsilon^{\xi/2} \leq \sqrt{2}$, which implies $\| \eta_f\|_\infty \leq \frac{\pi}{2}$. Recall that we have ensured that $\eta_f$ takes values in $(-\pi,\pi]$ a priori. Therefore
\[
\|\eta_f\|_\infty \leq \tfrac{\pi}{2}\|\sin(\eta_f)\|_\infty \leq \tfrac{\pi}{2} \left\|e^{i\eta_f} - 1 \right\|_\infty \leq \tfrac{\pi}{2}C_2\varepsilon^\frac{\xi}{2} .
\]
The function $\eta_f$ satisfies all properties stated in the lemma, except for the normalization condition $\int_0^\pi \eta_f = 0$. This can be corrected by replacing $\eta_f$ with its translate $\eta_f - \frac{1}{\pi} \int_0^\pi\eta_f$. The resulting function lies in $L^2_{\pi,0}(\R)$ with upper bound $\|\eta_f\|_\infty \leq \pi C_2\varepsilon^\frac{\xi}{2}$, and retains all other required properties. This completes the proof of the lemma.
\end{proof}

The restriction to $E^+(\bS^1)$ poses no issue, since $E^+(\bS^1)$ is dense in $E(\bS^1)$, as established in Lemma~\ref{lem:truncated}, and the map $f \mapsto \|\omega_f\|_{\mathrm{ir}}$ is continuous by Lemma~\ref{lem:hoeldercont}. However, we must refine the estimate $\|\eta_f\|_\infty \leq C\|f-h\|^{\xi/2}$ in Lemma~\ref{lem:optimallinfinity}, upgrading the Hölder exponent to $\xi$. This refinement constitutes the main technical step prior to the proof of Theorem~\ref{thm:mainthm1}.

\begin{prop}
	\label{prop:strictconcavity2}
Let $\xi \in (0,1)$ and $r \in (0,\frac{\pi}{2})$. Then there exist $0 < \varepsilon_1 \leq \varepsilon_2 < \frac{r}{2}$, and $c, C > 0$ such that for all $h \in \bS^2_+ \cap E_r(\bS^1)$ and all $f \in \B(h,\varepsilon_1) \cap E^+(\bS^1)$, the following properties hold:
\begin{enumerate}
	\item $\eta \mapsto \Psi_h(f,\eta)$ is strictly concave on $L^2_{\pi,0}(\R)\cap\B_{L^\infty(\R)}(0,\varepsilon_2)$, and admits a unique maximizer $\eta_f$ satisfying
	\[
	\Psi_h(f,\eta_f) = \sup_{\eta \in L^2_{\pi,0}(\R)} \Psi_h(f,\eta) = \|\omega_f\|_{\rm ir}.
	\]
	Moreover, for any $\eta^0,\eta^1 \in L^2_{\pi,0}(\R) \cap \B_{L^\infty(\R)}(0,\varepsilon_2)$, the inequality
	\[
	\frac{d^2}{dt^2}\Psi_h(f,\eta^t) \leq -c\|\eta^1 - \eta^0\|_2^2
	\]
	holds for all $t \in [0,1]$, where $\eta^t \defl (1-t)\eta^0 + t\eta^1$.
	\item $\eta_f$ is continuous and $f \mapsto \eta_f$ is continuous at $h$ in the sense that
	\[
	\|\eta_f\|_\infty \leq C\|f - h\|_\infty^\xi .
	\]
\end{enumerate}
\end{prop}

\begin{proof}
Fix $\xi \in (0,1)$ and $r \in (0,\frac{\pi}{2})$. We may choose $0 < \varepsilon_1(\xi,r) < \frac{r}{2}$ sufficiently small such that Lemma~\ref{lem:optimallinfinity} applies with parameter $\varepsilon_1$. That is, for all $h \in \bS^2_+ \cap E_r(\bS^1)$ and $f \in \B(h,\varepsilon_1) \cap E^+(\bS^1)$, the function $\eta_f \in L^2_{\pi,0}(\R)$, provided by Lemma~\ref{lem:optimallinfinity}, satisfies
\[
\|\eta_f\|_\infty \leq C(\xi,r)\|f-h\|_\infty^\frac{\xi}{2} \leq C(\xi,r)\varepsilon_1^\frac{\xi}{2}.
\]
If $\varepsilon_1$ is sufficiently small, we can assume that Lemma~\ref{lem:strictconcavity} holds with parameter
\[
\varepsilon_2 \defl \max\left\{ \varepsilon_1, C(\xi,r)\varepsilon_1^\frac{\xi}{2} \right\} < \frac{r}{2}.
\]

Fix $h \in \bS^2_+ \cap E_r(\bS^1)$ and $f \in \B(h,\varepsilon_1) \cap E^+(\bS^1)$. The function $\eta_f$ is continuous, satisfies
\[
\Psi_h(f,\eta_f) = \sup_{\eta \in L^2_{\pi,0}(\R)} \Psi_h(f,h),
\]
and is contained in 
$L^2_{\pi,0}(\R) \cap \B_{L^\infty(\R)}(0,\varepsilon_2)$ by the choice of $\varepsilon_1$ and $\varepsilon_2$. Let $\eta^t$ be a variation in $L^2_{\pi,0}(\R) \cap \B_{L^\infty(\R)}(0,\varepsilon_2)$ as in (1). Since $\varepsilon_1 \leq \varepsilon_2$, it follows from Lemmas~\ref{lem:psismooth} and \ref{lem:strictconcavity} that
\begin{align}
	\nonumber
\frac{d^2}{dt^2}\Psi_h(f,\eta^t) & = -\int_0^\pi\int_\alpha^\pi p_{\alpha,\beta}(f) \sin\left(\Delta_{\alpha,\beta}^h + \Delta_{\alpha,\beta}^{\eta^t}\right)\left(\Delta_{\alpha,\beta}^{\eta^t}\right)^2\,d\beta\,d\alpha \\
	\label{eq:strictconcave}
 & \leq -c\|\eta^1 - \eta^0\|_2^2
\end{align}
for some $c(r) > 0$. This implies that $\eta \mapsto \Psi_h(f,\eta)$, when restricted to $L^2_{\pi,0}(\R) \cap \B_{L^\infty(\R)}(0,\varepsilon_2)$, is strictly concave. Consequently, $\eta_f$ is the unique maximizer in this set. This proves (1).

For $\eta \in L^2_{\pi,0}(\R) \cap \B_{L^\infty(\R)}(0,\varepsilon_2)$, let $\gamma_\eta$ and $\mu_{f,\eta}$ denote the associated paths as defined in \eqref{eq:sigmadef} and \eqref{eq:mudef}, respectively. By Lemma~\ref{lem:pfunction}(4), there exists $M(r) > 0$ such that $q_{\alpha,\beta}(h) \leq M(r)$ . Recall that $\|\eta\|_{2}^2 = \pi\int_0^{\pi}\eta^2$, and note that $\|\eta\|_\infty \leq \varepsilon_2 \leq \frac{\pi}{2}$. From Lemma~\ref{lem:hoeldercont} it follows that
\begin{align*}
\left|\mu_{f,\gamma_{\eta}}(\alpha) - \mu_{h,\gamma_{0}}(\alpha)\right| & \leq \left|\mu_{f,\gamma_{\eta}}(\alpha) - \mu_{h,\gamma_{\eta}}(\alpha)\right| + \left|\mu_{h,\gamma_{\eta}}(\alpha) - \mu_{h,\gamma_{0}}(\alpha)\right| \\
 & \leq H \|f - h\|_\infty^\xi + \int_\alpha^{\alpha + \pi}q_{\alpha,\beta}(h)\left|\gamma_{\eta}(\beta)-\gamma_{0}(\beta)\right|\,d\beta \\
 & \leq H \|f - h\|_\infty^\xi + M\int_\alpha^{\alpha + \pi}|e^{i\eta(\beta)}-1|\,d\beta \\
 & \leq H \|f - h\|_\infty^\xi + M\int_\alpha^{\alpha + \pi}|\eta(\beta)|\,d\beta \\
 & \leq H \|f - h\|_\infty^\xi + M\left(\pi\int_\alpha^{\alpha + \pi}|\eta(\beta)|^2\,d\beta\right)^\frac{1}{2} \\
 & = H \|f-h\|_\infty^\xi + M\|\eta\|_2 .
\end{align*}
Together with Lemma~\ref{lem:optimallinfinity}(4) and $\|\eta_f\|_\infty \leq \frac{\pi}{2}$, this implies
\begin{equation}
\label{eq:linftyuperbound}
\|\eta_f\|_\infty \leq \tfrac{\pi}{2}\|e^{i\eta_f} - 1\|_\infty = \tfrac{\pi}{2}\left\|\gamma_{\eta_f} - \gamma_{0}\right\|_\infty \leq a(\|f-h\|_\infty^\xi + \|\eta_f\|_2)
\end{equation}
for some constant $a(\xi,r) > 0$.

For $\eta \in L^2_{\pi,0}(\R) \cap \B_{L^\infty(\R)}(0,\varepsilon_2)$ and $v \in L^2_{\pi,0}(\R)$, define
\[
F(f,\eta)(v) \defl \int_0^\pi\int_\alpha^\pi p_{\alpha,\beta}(f)\cos(\Delta_{\alpha,\beta}^h + \Delta_{\alpha,\beta}^{\eta})\Delta^v_{\alpha,\beta}\,d\beta\,d\alpha.
\]
We consider the variation $\psi(t) \defl F(f,t\eta_f)(\eta_f)$ for $t \in [0,1]$. By Lemma~\ref{lem:psismooth}, the first derivative of $\psi$ is given by
\[
\psi'(t) = -\int_0^\pi\int_\alpha^\pi p_{\alpha,\beta}(f) \sin(\Delta_{\alpha,\beta}^h + \Delta_{\alpha,\beta}^{t\eta_f})(\Delta_{\alpha,\beta}^{\eta_f})^2\,d\beta\,d\alpha.
\]
Moreover, with \eqref{eq:strictconcave}, this satisfies the lower bound
\[
|\psi'(t)| \geq c \|\eta_f\|_2^2.
\]
Since $\psi$ is continuously differentiable by Lemma~\ref{lem:psismooth}, the mean value theorem yields the existence of some $m \in (0,1)$ with
\[
|F(f,\eta_f)(\eta_f) - F(f,0)(\eta_f)| = |\psi'(m)| .
\]
It holds $F(f,\eta_f)(\eta_f) = 0 = F(h,0)(\eta_f)$ because $\Psi_h(f,\cdot)$ is stationary at $\eta_f$ and $\Psi_h(f,\cdot)$ is stationary at $0$. Together with Lemma~\ref{lem:hoeldercont}, this implies that
\begin{align*}
c\|\eta_f\|_2^2 & \leq |\psi'(m)| = |F(f,\eta_f)(\eta_f) - F(f,0)(\eta_f)| \\
  & = \left|F(f,0)(\eta_f) - F(h,0)(\eta_f)\right| \\
 & = \left|\int_0^\pi\int_\alpha^\pi (p_{\alpha,\beta}(f) - p_{\alpha,\beta}(h)) \cos(\Delta_{\alpha,\beta}^h) \Delta_{\alpha,\beta}^{\eta_f} \,d\beta\,d\alpha\right| \\
 & \leq 2\|\eta_f\|_\infty \int_0^\pi\int_\alpha^\pi |p_{\alpha,\beta}(f) - p_{\alpha,\beta}(h)|\,d\beta\,d\alpha \\
 & \leq 2H\|\eta_f\|_\infty \|f - h\|_\infty^\xi .
\end{align*}
Combined with \eqref{eq:linftyuperbound}, this yields constants $a(\xi,r),b(\xi,r) > 0$ such that
\begin{equation*}
\|\eta_f\|_\infty \leq a(\|f-h\|_\infty^\xi + \|\eta_f\|_2) , \quad
\|\eta_f\|_2 \leq b\|\eta_f\|_\infty^\frac{1}{2} \|f - h\|_\infty^\frac{\xi}{2} .
\end{equation*}
Setting $x \defl \|\eta_f\|_\infty^\frac{1}{2}$ and $y \defl \|f - h\|_\infty^\frac{\xi}{2}$, we estimate
\begin{align*}
x^4 & \leq a^2(\|\eta_f\|_2^2 + 2\|\eta_f\|_2\|f-h\|_\infty^\xi + \|f-h\|_\infty^{2\xi}) \\
 & \leq a^2\bigl(b^2\|\eta_f\|_\infty \|f - h\|_\infty^\xi + 2b \|\eta_f\|_\infty^\frac{1}{2} \|f - h\|_\infty^\frac{3\xi}{2} + \|f-h\|_\infty^{2\xi}\bigr) \\
 & = a^2(b^2x^2y^2 + 2bxy^3 + y^4) \\
 & = a^2y^2(y + bx)^2 .
\end{align*}
Hence $x^2 \leq a y(y + bx)$. We claim that this implies $x \leq Cy$, where
\[
C \defl \left(\sqrt{\frac{b^2}{4} + \frac{1}{a}} - \frac{b}{2}\right)^{-1} .
\]
Assume for contradiction that $x > Cy$. Then:
\begin{align*}
y^2 + bxy - a^{-1}x^2 & = (y + \tfrac{1}{2}bx)^2 - x^2(\tfrac{1}{4}b^2 + a^{-1}) \\
 & < x^2(C^{-1} + \tfrac{1}{2}b)^2 - x^2(\tfrac{1}{4}b^2 + a^{-1}) \\
 & = x^2(\tfrac{1}{4}b^2 + a^{-1}) - x^2(\tfrac{1}{4}b^2 + a^{-1}) \\
 & = 0 ,
\end{align*}
which contradicts $x^2 \leq a y(y + bx)$. Therefore, $x \leq Cy$ as claimed. Substituting back the definitions of $x$ and $y$, we conclude that
\[
\|\eta_f\|_\infty \leq C^2\|f - h\|_\infty^\xi .
\]
This establishes the estimate in (2).
\end{proof}

\subsection{Proofs of the main theorems}

Fix $\xi \in (0,1)$, $r \in (0,\frac{\pi}{2})$ and $h \in \bS^2_+\setminus \bS^1$ with representation $h = \arccos(\cos(d)\cos(\cdot - \tau))$ for $\tau \in \R$ and $d \in [r,\frac{\pi}{2}]$. Let $\varepsilon_1(\xi,r),\varepsilon_2(\xi,r) > 0$ be as in Proposition~\ref{prop:strictconcavity2}, ensuring that for all $f \in \B(h,\varepsilon_1)$, a maximizer $\eta_f$ of $\Psi_h(f,\cdot)$ exists in $L^2_{\pi,0}(\R) \cap \B_{L^\infty(\R)}(0,\varepsilon_2)$. The function $\Psi_h$ is defined as in the last subsection by
\[
\Psi_h(f,\eta) = \int_0^\pi\int_\alpha^\pi p_{\alpha,\beta}(f) \sin(\nu_h(\beta) - \nu_h(\alpha) + \eta(\beta) - \eta(\alpha))\,d\beta\,d\alpha ,
\]
where $\nu_h$ is the unique bi-Lipschitz function satisfying the conditions $\nu_h(0) = 0$, $\nu_h(\alpha + \pi) = \nu_h(\alpha) + \pi$ and
\begin{equation*}
\nu_h'(\alpha) = p_\alpha(h) = \frac{\sin(d)}{\sin(h_\alpha)^2} = \frac{\sin(d)}{1 - \cos(d)^2\cos(\alpha - \tau)^2} \in [m_1(r),m_2(r)] .
\end{equation*}
For $f \in \B(h,\varepsilon_1) \cap E^+(\bS^1)$, we aim to estimate
\begin{equation}
\label{eq:mainestimate}
	|\Psi_h(h,0) - \Psi_h(f,\eta_f)| \leq |\Psi_h(h,0) - \Psi_h(f,0)| + |\Psi_h(f,0) - \Psi_h(f,\eta_f)|.
\end{equation}
We start with the second term on the right-hand side. Define $\psi(t) \defl \Psi_h(f,(1-t)\eta_f)$. By Lemma~\ref{lem:psismooth}, $\psi$ belongs to $C^2([0,1])$ and satisfies $\psi'(0) = 0$ since $\eta_f$ is a maximizer of $\Psi_h(f,\cdot)$. Moreover, Lemma~\ref{lem:pfunction}(4) ensures that $p_{\alpha,\beta}(f) \leq M(r)$ for all $0 < \alpha < \beta < \pi$, and Proposition~\ref{prop:strictconcavity2}(2) yields the bound $\|\eta_f\|_\infty \leq C(\xi,r)\|f-h\|_\infty^{\xi}$. By Taylor’s theorem, there exists some $m \in (0,1)$ such that
\begin{align}
\nonumber
|\Psi_h(f,0) - \Psi_h(f,\eta_f)| & = |\psi(1) - \psi(0) - \psi'(0)| = \left|\tfrac{1}{2}\psi''(m) \right| \\
\nonumber
& = \left|\frac{1}{2}\int_0^\pi\int_\alpha^\pi p_{\alpha,\beta}(f) \sin(\Delta_{\alpha,\beta}^h + \Delta_{\alpha,\beta}^{(1-m)\eta_f})(\Delta_{\alpha,\beta}^{\eta_f})^2\,d\beta\,d\alpha\right| \\
\nonumber
& \leq \frac{1}{2}M\int_0^\pi\int_\alpha^\pi (\Delta_{\alpha,\beta}^{\eta_f})^2\,d\beta\,d\alpha \\
\nonumber
& \leq \pi^2 M \|\eta_f\|_\infty^2 \\
\label{eq:secondterm}
& \leq \pi^2 M C\|f-h\|_\infty^{2\xi} .
\end{align}

To estimate the first term on the right-hand side of \eqref{eq:mainestimate}, define $f^t \defl (1-t)h + tf$ for $t \in [0,1]$. Analogous to the previous argument, consider the function
\[
\phi(t) \defl \Psi_h(f^t,0) = \int_0^\pi\int_\alpha^\pi p_{\alpha,\beta}(f^t)\sin(\nu_h(\beta)-\nu_h(\alpha))\,d\beta\,d\alpha .
\]
The function $\phi$ belongs to $C^2([0,1])$ by Lemma~\ref{lem:derivative}. For all $0 < \alpha < \beta < \pi$ we have the uniform bound
\[
\sup_{t\in[0,1]}|\partial_t^2 p_{\alpha,\beta}(f^t)| \leq C(r) \max\left\{ 1, \frac{\|f-h\|_\infty}{\sin(\beta-\alpha)^{2}} \right\}
\]
by Lemma~\ref{lem:unifboundder}. The second derivative of $\phi$ is therefore uniformly bounded by
\[
|\phi''(t)| \leq \int_0^\pi\int_\alpha^\pi \sup_{t\in[0,1]}|\partial_t^2 p_{\alpha,\beta}(f^t)| \sin(\nu_h(\beta)-\nu_h(\alpha))\,d\beta\,d\alpha .
\]
Note that $\sin(\nu_h(\beta)-\nu_h(\alpha)) \leq \frac{\pi}{2}m_2 \sin(\beta-\alpha)$ for $0 < \alpha < \beta < \pi$. This follows because
\[
\sin(\nu_h(\beta)-\nu_h(\alpha)) \leq \nu_h(\beta)-\nu_h(\alpha) \leq m_2(\beta - \alpha) \leq \tfrac{\pi}{2}m_2\sin(\beta-\alpha)
\]
if $\beta - \alpha \leq \frac{\pi}{2}$. For the case $\beta - \alpha \leq \frac{\pi}{2}$, replace $(\alpha,\beta)$ by $(\beta,\alpha+\pi)$ and apply the same argument. Thus, the integrand above admits the upper bound
\[
C_1\sin(\beta-\alpha)^{-1}\|f-h\|_\infty^2 ,
\]
for some $C_1(r) > 0$. Exactly as in the proof of Lemma~\ref{lem:hoeldercont}, this yields the estimate
\begin{equation}
\label{eq:phiderder}
|\phi''(t)| \leq C_2\|f-h\|_\infty^{2\xi}
\end{equation}
for all $t \in [0,1]$ and $C_2(\xi,r) > 0$.

We claim that $\phi'(0) = 0$. Denoting $\delta \defl f - h$, and using the notation from Lemma~\ref{lem:derivative}, we have
\begin{align*}
\phi'(0) & = \int_0^\pi\int_\alpha^\pi (\partial_x p(\beta-\alpha,h_\alpha,h_\beta)\delta_\alpha + \partial_y p(\beta-\alpha,h_\alpha,h_\beta)\delta_\beta)\sin(\Delta_{\alpha,\beta}^h)\,d\beta\,d\alpha .
\end{align*}
For simplicity, assume $\tau = 0$, so that $h_\alpha = \arccos(\cos(d)\cos(\alpha))$. Using the trigonometric identity
\[
\cos(\beta-\alpha)\cos(\alpha) - \cos(\beta) = \sin(\beta - \alpha)\sin(\alpha) ,
\]
and Lemma~\ref{lem:derivative}, the first partial derivative is, for almost every pair $(\alpha,\beta)$,
\begin{align*}
\frac{1}{2}\partial_x p(\beta & - \alpha,h_\alpha,h_\beta) \\
& = \frac{(\cos(\beta-\alpha)\cos(h_\alpha) - \cos(h_\beta))(\cos(\beta-\alpha) - \cos(h_\alpha)\cos(h_\beta))} {\sin(\beta-\alpha)^2\sin(h_\alpha)^3\sin(h_\beta)^2} \\
& = \cos(d)\frac{\sin(\alpha)(\cos(\beta-\alpha) - \cos(h_\alpha)\cos(h_\beta))} {\sin(\beta-\alpha)\sin(h_\alpha)^3\sin(h_\beta)^2}.
\end{align*}
By \eqref{eq:areaformula2}, we have
\begin{align*}
\sin(\Delta_{\alpha,\beta}^h) = \sin(\nu_h(\beta)-\nu_h(\alpha)) = \frac{\sin(d)\sin(\beta-\alpha)}{\sin(h_\alpha)\sin(h_\beta)}.
\end{align*}
Multiplying this with the first partial derivative term from before yields
\begin{align*}
\frac{1}{2}\partial_x p(\beta-\alpha,h_\alpha,h_\beta)\sin(\Delta_{\alpha,\beta}^h) = \frac{\cos(d)\sin(d)\sin(\alpha)}{\sin(h_\alpha)^4}\frac{\cos(\beta-\alpha) - \cos(h_\alpha)\cos(h_\beta)} {\sin(h_\beta)^3} .
\end{align*}
In particular, $\partial_x p(\beta-\alpha,h_\alpha,h_\beta)\delta_\alpha\sin(\Delta_{\alpha,\beta}^h)$ is integrable over $[0,\pi]^2$. By symmetry, the analogous term $\partial_y p(\beta-\alpha,h_\alpha,h_\beta)\delta_\beta\sin(\Delta_{\alpha,\beta}^h)$ is also integrable and can be rewritten as
\begin{align*}
& \int_0^\pi\int_\alpha^\pi \partial_y p(\beta-\alpha,h_\alpha,h_\beta)\delta_\beta\sin(\Delta_{\alpha,\beta}^h)\,d\beta\,d\alpha \\
 & \qquad = \int_0^\pi\int_\alpha^\pi \partial_x p(\beta-\alpha,h_\beta,h_\alpha)\delta_\beta\sin(\Delta_{\alpha,\beta}^h)\,d\beta\,d\alpha \\
 & \qquad = \int_0^\pi\int_0^\beta \partial_x p(\beta-\alpha,h_\beta,h_\alpha)\delta_\beta\sin(\Delta_{\alpha,\beta}^h)\,d\alpha\,d\beta \\
 & \qquad = \int_0^\pi\int_0^\alpha \partial_x p(\alpha-\beta,h_\alpha,h_\beta)\delta_\alpha\sin(\Delta_{\beta,\alpha}^h)\,d\beta\,d\alpha \\
 & \qquad = -\int_0^\pi\int_0^\alpha \partial_x p(\beta-\alpha,h_\alpha,h_\beta)\delta_\alpha\sin(\Delta_{\alpha,\beta}^h)\,d\beta\,d\alpha \\
 & \qquad = \int_0^\pi\int_\pi^{\alpha + \pi} \partial_x p(\beta-\alpha,h_\alpha,h_\beta) \delta_\alpha \sin(\Delta_{\alpha,\beta}^h)\,d\beta\,d\alpha .
\end{align*}
In the last two lines, we used the identities
\[
\partial_x p(\beta-\alpha,h_\alpha,h_\beta) = \partial_x p(\alpha-\beta,h_\alpha,h_\beta) = \partial_x p(\pi + \beta-\alpha,h_\alpha,h_{\beta+\pi}),
\]
\[
\sin(\Delta_{\beta,\alpha}^h) = -\sin(\Delta_{\alpha,\beta}^h) = \sin(\Delta_{\alpha,\beta+\pi}^h).
\]
It follows that
\[
\phi'(0) = \int_0^\pi \delta_\alpha \int_\alpha^{\alpha + \pi} \partial_x p(\beta-\alpha,h_\alpha,h_\beta) \sin(\Delta_{\alpha,\beta}^h)\,d\beta\,d\alpha .
\]
Therefore, the claim will hold if the inner integral
\begin{align}
	\label{eq:finalvariation}
	\int_\alpha^{\alpha + \pi} \frac{\cos(\beta - \alpha) - \cos(h_\alpha)\cos(h_\beta)} {\sin(h_\beta)^3} \, d\beta
\end{align}
vanishes for all $\alpha$. The integrand in \eqref{eq:finalvariation} is equal to
\begin{align*}
	\frac{\cos(\beta - \alpha) - \cos(h_\alpha)\cos(h_\beta)} {\sin(h_\beta)^3}
	& = \frac{\cos(\beta - \alpha) - \cos(d)^2\cos(\alpha)\cos(\beta)} {(1-\cos(d)^2\cos(\beta)^2)^\frac{3}{2}} \\
	& = \frac{\sin(\alpha)\sin(\beta) + \sin(d)^2\cos(\alpha)\cos(\beta)}{(1-\cos(d)^2\cos(\beta)^2)^\frac{3}{2}}.
\end{align*}
Since
\begin{align*}
	\frac{\partial}{\partial \beta} \frac{\sin(\beta)}{(1-\cos(d)^2\cos(\beta)^2)^\frac{1}{2}} & = \frac{\sin(d)^2\cos(\beta)}{(1-\cos(d)^2\cos(\beta)^2)^\frac{3}{2}} , \\
	\frac{\partial}{\partial \beta} \frac{\cos(\beta)}{(1-\cos(d)^2\cos(\beta)^2)^\frac{1}{2}} & = \frac{-\sin(\beta)}{(1-\cos(d)^2\cos(\beta)^2)^\frac{3}{2}} ,
\end{align*}
it follows that the integral in \eqref{eq:finalvariation} is equal to
\begin{align*}
	\left. \frac{-\sin(\alpha)\cos(\beta) + \cos(\alpha)\sin(\beta)}{(1-\cos(d)^2\cos(\beta)^2)^\frac{1}{2}} \right|_{\alpha}^{\alpha+\pi} = \left. \frac{\sin(\beta - \alpha)}{(1-\cos(d)^2\cos(\beta)^2)^\frac{1}{2}} \right|_{\alpha}^{\alpha+\pi} = 0 . 
\end{align*}
This shows that $\phi'(0) = 0$. Applying Taylor's theorem once more, there exists some $m \in (0,1)$ such that, using \eqref{eq:phiderder}, we have
\begin{align*}
|\Psi_h(f,0) - \Psi_h(h,0)| & = |\phi(1) - \phi(0) - \phi'(0)| = |\tfrac{1}{2}\phi''(m)| \leq C_2\|f-h\|_\infty^{2\xi} .
\end{align*}
Combining this with \eqref{eq:secondterm} and Proposition~\ref{prop:strictconcavity2}, it follows that
\begin{equation}
\label{eq:eplusestimate}
|\|\omega_{h}\|_{\rm ir} - \|\omega_f\|_{\rm ir}| = |\Psi_h(h,0) - \Psi_h(f,\eta_f)| \leq C_3\|f - h\|_\infty^{2\xi}
\end{equation}
for some $C_3(\xi,r) > 0$, provided that $f \in \B(h,\varepsilon_1) \cap E^+(\bS^1)$. Since $f \mapsto \|\omega_f\|_{\rm ir}$ is uniformly continuous on $\B(h,\varepsilon_1)$ by Lemma~\ref{lem:hoeldercont}, the estimate \eqref{eq:eplusestimate} extends to all $f \in \B(h,\varepsilon_1)$ by density of $E^+(\bS^1)$ in $E(\bS^1)$, as established in Lemma~\ref{lem:epluslem}.

\begin{proof}[Proof of Theorem~\ref{thm:mainthm1}]
Let $r \in (0,\frac{\pi}{2})$ and $\xi \in (1,2)$ be fixed as in the statement of Theorem~\ref{thm:mainthm1}. Choose $\varepsilon_1(\frac{\xi}{2},\frac{r}{2}) > 0$ as above, and suppose that $f \in E_r(\bS^1) \cap \B(\bS^2_+,\varepsilon_1)$. Note that $\varepsilon_1 \leq \frac{r}{4}$ as assumed in Proposition~\ref{prop:strictconcavity2}. Let $h \in \bS^2_+$ be such that $\|f-h\|_\infty = \dist(f,\bS^2_+)$. By construction, $h \in E_{\frac{r}{2}}(\bS^1)$, since for every $g \in E(\bS^1)\setminus E_{\frac{r}{2}}(\bS^1)$, we have $\|f-g\|_\infty > \frac{r}{2} > \varepsilon_1$. Hence, by \eqref{eq:eplusestimate} we obtain
\[
|\|\omega_f\|_{\rm ir} - \pi| \leq C_3(\tfrac{\xi}{2},\tfrac{\varepsilon}{2})\|f - h\|_\infty^{\xi} = C_3(\tfrac{\xi}{2},\tfrac{r}{2})\dist(f,\bS^1)^{\xi} .
\]
Here we used that $\|\omega_{h}\|_{\rm ir} = \pi$ for every $h \in \bS^2_+\setminus \bS^1$, which follows from \eqref{eq:spherecalibration}. Since $\|\omega_f\|_{\rm ir}$ is uniformly bounded on $E_r(\bS^1)$ by Lemma~\ref{lem:hoeldercont}, the above estimate extends to all $f \in E_r(\bS^1)$ with a possibly larger constant. Since the two-form introduced in the introduction satisfies $\tilde \omega = \frac{1}{\pi}\omega$, this concludes the proof of Theorem~\ref{thm:mainthm1}.
\end{proof}

Theorem~\ref{thm:mainthm2} follows rather directly from Theorem~\ref{thm:mainthm1} and Proposition~\ref{prop:massbound}. 

\begin{proof}[Proof of Theorem~\ref{thm:mainthm2}]
Fix $r \in (0,\frac{\pi}{2})$ and $\xi \in (1,2)$. The radial retraction, $\rho_r : \bS^2_+ \to \bS^2_+ \cap E_r(\bS^1)$ is $1$-Lipschitz. This follows from Gauss’s lemma and the fact that, for any two unit-speed geodesics $\gamma_1,\gamma_2 : [0,\frac{\pi}{2}] \to \bS^2_+$ emitting from the north pole, the distance between them is non-decreasing; that is, $d(\gamma_1(s),\gamma_2(s)) \leq d(\gamma_1(t),\gamma_2(t))$ for all $s \leq t$. The image of this retraction is denoted by $S_r \defl \{p \in \bS^2_+ : \dist(p,\bS^1)\geq r\}$. Since $E(\bS^1)$ is an injective metric space, the map $\rho_r$ admits a $1$-Lipschitz extension $\bar \rho_r : E(\bS^1) \to E(\bS^1)$. By Lemma~\ref{lem:truncated}(4), we may assume that the image of $\bar\rho_r$ is contained in $E_r(\bS^1)$, since this holds for $\rho_r$. Now let $T,S \in \cR_2(X)$ be as in the statement of Theorem~\ref{thm:mainthm1}. The current $S$ is isometric to $\curr{\bS^2_+}$, and hence there exists an isometric embedding $\varphi : \spt(S) \to E(\bS^1)$ such that $\varphi(\spt(S)) = \bS^2_+$ and $\varphi_\# S = \curr{\bS^2_+}$. The map $\varphi$ admits a $1$-Lipschitz extension $\bar \varphi : X \to E(\bS^1)$. We consider the composition $\psi \defl \bar\rho_r\circ \bar\varphi : X \to E_r(\bS^1)$, as well as the restrictions $S^\circ \defl S\res (X \setminus N_r)$ and $T^\circ \defl T\res (X \setminus N_r)$, to the portions away from the collar $N_r \defl \B(\spt(\partial S),r) \cap \spt(S)$. By assumption, $S - S^\circ = T - T^\circ$, and
\[
\bar \varphi_\#(T - T^\circ) = \bar \varphi_\#(S - S^\circ) = \curr{\bS^2_+\setminus S_r} .
\]
Define $T'\defl \psi_\# T^\circ$ and observe that
\[
\partial T' = \psi_\# \partial T^\circ = \psi_\# \partial S^\circ = \partial \curr{S_r}.
\]
Set $d \defl h(\spt(T),\spt(S))$ to be the directed Hausdorff distance from $\spt(T)$ to $\spt(S)$. Since $\psi$ is $1$-Lipschitz, $\spt(T')$ is contained in $\B(S_r,d) \cap E_r(\bS^1)$. Applying Theorem~\ref{thm:mainthm1} and Proposition~\ref{prop:massbound}, we obtain
\[
T'(\tilde \omega) \leq \bM_{\rm ir}(T') \sup_{f \in \spt(T')} \|\tilde\omega_f\| \leq \bM_{\rm ir}(T')(1 + Cd^{\xi})
\]
for some constant $C(\xi,r) > 0$. The form $\tilde \omega$ is exact by Lemma~\ref{lem:action}, and therefore $T'(\tilde \omega) = \curr{S_r}(\tilde \omega)$. Applying Lemma~\ref{lem:masslem}(5) to the $1$-Lipschitz maps $\psi$ and $\bar\varphi$ implies
\begin{align*}
2\pi & = \curr{\bS^2_+}(\tilde \omega) = (T' + \curr{\bS^2_+\setminus S_r})(\tilde\omega) = T'(\tilde\omega) + \bM_{\rm ir}(\curr{\bS^2_+\setminus S_r}) \\
 & \leq (\bM_{\rm ir}(T') + \bM_{\rm ir}(\curr{\bS^2_+ \setminus S_r}))(1 + Cd^{\xi}) \\
 & = (\bM_{\rm ir}(\psi_\# T^\circ) + \bM_{\rm ir}(\bar\varphi_\# (T - T^\circ)))(1 + Cd^{\xi}) \\
 & \leq (\bM_{\rm ir}(T^\circ) + \bM_{\rm ir}(T - T^\circ))(1 + Cd^{\xi}) \\
 & = \bM_{\rm ir}(T)(1 + Cd^{\xi}) .
\end{align*}
In the last line we used that $T = T^\circ + (T - T^\circ) = T \res (X \setminus N_r) + T \res N_r$ is a disjoint decomposition. Theorem~\ref{thm:mainthm2} trivially holds if $\bM_{\rm ir}(T) \geq 2\pi$. If $\bM_{\rm ir}(T) \leq 2\pi$, then
\[
2\pi \leq \bM_{\rm ir}(T) + 2\pi C d^\xi.
\]
This completes the proof of Theorem~\ref{thm:mainthm2}.
\end{proof}

\section{Comments}
\label{sec:comments}

\subsection{Other definitions of area}

Gromov's filling area conjecture, as stated in the introduction, is formulated for the inscribed Riemannian Finsler volume, for the natural reason that it is the largest such volume, as stated in Proposition~\ref{lem:masslem}. For certain alternative choices of volume, the conjecture fails to hold. To illustrate this, consider the cone $C \defl N \cone \curr{\bS^1}$ as a current in $\cR_2(E(\bS^1))$, where $N = \frac{\pi}{2} \in E(\bS^1)$ is the constant function. The current $C$ is an oriented Lipschitz disk that contains all the functions $f \in E(\bS^1)$ of the form
\begin{equation}
\label{eq:conefunction}
f = (1-r)\tfrac{\pi}{2} + r d_\alpha ,
\end{equation}
where $d_\alpha$ is the distance function to $\alpha \in \bS^1$, and $r \in [0,1]$. Alternatively, $C = \varphi_\# \curr{[0,1] \times [0,2\pi]}$ with the parametrization $\varphi(r,\alpha)$ given on the right-hand side of \eqref{eq:conefunction}. For a particular Finsler area $\mu$, the $\mu$-mass of $C$ is given by
\[
\bM_\mu(C) = \int_{[0,1] \times [0,2\pi]} \mathbf{J}_{\mu}(\operatorname{md}\varphi_x) \,dx .
\]
Let $X \defl \ell^2_1$, that is, $\R^2$ equipped with the norm $\|xe_1 + ye_2\|_1 = |x| + |y|$, and let $\B_X$ denote the unit disk in $X$. Then, for almost every $(r,\alpha)$, we have
\[
d(\varphi(r,\alpha),\varphi(r + h,\alpha + k)) = \tfrac{\pi}{2}|h| + r |k| + o(|h| + |k|) .
\]
Thus
\[
\operatorname{md} \varphi_{(r,\alpha)}(h,k) = \tfrac{\pi}{2}|h| + r |k|,
\]
and
\[
\mathbf{J}_{\mu}(\operatorname{md} \varphi_{(r,\alpha)}) = \tfrac{\pi}{2}r \mathbf{J}_{\mu}(\|\cdot\|_1) = \tfrac{\pi}{2}r \mu_X(e_1 \wedge e_2).
\]
By integration, it follows that
\begin{align*}
\bM_\mu(C) & = \int_0^1 \int_0^{2\pi} \mathbf{J}_{\mu}(\operatorname{md} \varphi_{(r,\alpha)}) \,d\alpha \,dr \\
 & = 2\pi \int_0^1 \tfrac{\pi}{2}r \mu_X(e_1 \wedge e_2) \, dr \\
 & = \tfrac{\pi^2}{2} \mu_X(e_1 \wedge e_2) .
\end{align*}
Using the properties of various area definitions as stated in \cite[\S3]{APT}, and observing that $\mu_X(\B_X) = 2\mu_X([0,1]^2) = 2\mu_X(e_1 \wedge e_2)$:
\begin{itemize}
	\item (Gromov-mass) $\mu_X^{\rm m}(e_1 \wedge e_2) = \inf\{\|v\|_1\|w\|_1 : v \wedge w = e_1 \wedge e_2\} = 1$.
	\item (Gromov-mass$\ast$) $\mu_X^{\rm m\ast}(e_1 \wedge e_2) = \inf\{\langle\xi \wedge \eta, e_1 \wedge e_2\rangle : \|\xi\|_\infty,\|\eta\|_\infty \leq 1 \} = 2$.
	\item (Busemann-Hausdorff) $\mu_X^{\rm bh}(\B_X) = \pi$.
	\item (Holmes-Thompson) $\mu_X^{\rm ht}(e_1 \wedge e_2) = \tfrac{1}{\pi}\operatorname{Area}(\B_{X^\ast} ; e_1^\ast \wedge e_2^\ast) = \tfrac{4}{\pi}$.
\end{itemize}
Thus the different Finsler areas of the cone are
\[
\bM_{\rm m}(C) = \tfrac{\pi^2}{2} < 2\pi = \bM_{\rm ht}(C) < \bM_{\rm bh}(C) = \tfrac{\pi^3}{4} < \pi^2 = \bM_{\rm m\ast}(C) .
\]
In particular, $\bM_{\rm m}(C) < 2\pi$. So Gromov's filling conjecture fails for the Gromov-mass area. Moreover, with respect to the Holmes-Thompson area, the hemisphere cannot be the unique minimal filling of $\bS^1$.

\subsection{Lower bounds on the filling area}

Let $T \in \cR_2(E(\bS^1))$ with $\partial T = \curr{\bS^1}$, and fix $\alpha \in \R$. Then
\begin{align*}
T(d\pi_{\alpha} \wedge d\pi_{\alpha + \frac{\pi}{2}}) & = \curr{\bS^1}(\pi_{\alpha}\, d\pi_{\alpha + \frac{\pi}{2}}) = (\pi_\alpha,\pi_{\alpha+\frac{\pi}{2}})_\#\curr{\bS^1}(x\,dy) = \tfrac{\pi^2}{2} .
\end{align*}
In the last line we used that $(\pi_\alpha, \pi_{\alpha+\frac{\pi}{2}})_\#\curr{\bS^1}$ is a counterclockwise parametrization of the square in $\R^2$ with vertices $(0,\frac{\pi}{2})$, $(\frac{\pi}{2},0)$, $(\pi,\frac{\pi}{2})$, $(\frac{\pi}{2},\pi)$. The enclosed area is $\tfrac{\pi^2}{2}$, and by Lemma~\ref{lem:masslem}(2) this implies that
\begin{equation}
\label{eq:lowerbound}
\bM_{\rm m\ast}(T) = \bM(T) \geq \tfrac{\pi^2}{2} \approx 4.9348 .
\end{equation}
Consequently, the Gromov-mass$\ast$ filling area of $\curr{\bS^1}$—and therefore its inscribed Riemannian filling area, is bounded below by this value.

Numerical optimizations suggest that $\tilde\omega$, as defined in the introduction, is not a calibration. However, it appears close enough that it might still yield useful lower bounds on the filling area of a circle. In this context, it is useful to obtain upper bounds on the $L^1$ norm
\[
\|p(f)\|_1 \defl \int_0^\pi \int_\alpha^\pi p_{\alpha,\beta}(f) \,d\beta\,d\alpha .
\]
It is unclear whether a uniform upper bound exists for all $f \in E(\bS^1) \setminus \bS^1$, though numerical optimizations indicate that the following question may have an affirmative answer.

\begin{qu}
	\label{qu:final}
Is it true that
\[
\|p(f)\|_1 \leq \tfrac{\pi^2}{2},
\]
with equality if and only if $f \in \bS^2_+\setminus \bS^1$?
\end{qu}

If the above question has a positive answer, then
\[
\|\tilde\omega\|_{\rm m} \leq \tfrac{\pi}{2}
\]
follows from the rather crude estimate
\[
\tilde\omega_f(v\wedge w) = \frac{1}{\pi}\int_0^\pi \int_\alpha^\pi p_{\alpha,\beta}(f) (v_\alpha w_\beta - v_\beta w_\alpha) \,d\beta\,d\alpha \leq \tfrac{1}{\pi}\|p(f)\|_1\|v\|_\infty\|w\|_\infty
\]
for all $v,w \in L^\infty([0,\pi))$. By Proposition~\ref{prop:massbound}, this implies
\begin{align*}
\bM_{\rm m}(T)\|\tilde\omega\|_{\rm m} \geq T(\tilde\omega) & = \curr{\bS^2_+}(\tilde\omega) = 2\pi .
\end{align*}
A positive answer to Question~\ref{qu:final} would imply $\bM_{\rm m\ast}(T) \geq \bM_{\rm m}(T) \geq 4$. The inequality between these two masses is justified by \cite[Proposition~3.14]{APT}. Although this lower bound with respect to the Gromov-mass$\ast$ is weaker than the previously obtained $\frac{\pi^2}{2}$, further improvements along these lines appear plausible.



\end{document}